\tikzset{cross/.style={cross out, draw=black, minimum size=2*(#1-\pgflinewidth), inner sep=0pt, outer sep=0pt},
cross/.default={1pt}}
\tikzset{->-/.style={decoration={
  markings,
  mark=at position #1 with {\arrow[scale=1.5]{>}}},postaction={decorate}}}
\tikzset{-<-/.style={decoration={
  markings,
  mark=at position #1 with {\arrow[scale=1.5]{<}}},postaction={decorate}}}
\tikzset{midstealth/.style={decoration={
  markings,
  mark=at position #1 with {\arrow{stealth}}},postaction={decorate}}}
\newtheoremstyle{break}
  {}%
  {}%
  {\itshape}
  {}%
  {\bfseries}
  {.}%
  {\newline}%
  {}%
\theoremstyle{plain}
\newtheorem{thm}{Theorem}[subsection]
\newtheorem{lem}[thm]{Lemma}
\newtheorem{lemma}[thm]{Lemma}
\newtheorem{prop}[thm]{Proposition}
\newtheorem{cor}[thm]{Corollary}
\newtheorem{exa}[thm]{Example}
\newtheorem{rem}[thm]{Remark}
\DeclareMathOperator{\rank}{rank}
\DeclareMathOperator{\vol}{vol}
\DeclareMathOperator{\im}{Im}
\DeclareMathOperator{\re}{Re}
\DeclareMathOperator{\Ker}{Ker}
\DeclareMathOperator{\Hom}{Hom}
\DeclareMathOperator{\Homo}{H}
\DeclareMathOperator{\sol}{Sol}
\DeclareMathOperator{\diag}{diag}
\DeclareMathOperator{\cone}{cone}
\newcommand{\barsigma}{\overline{\sigma}}
\newcommand{\PP}{\mathbb{P}}
\newcommand{\R}{\mathbb{R}}
\newcommand{\C}{\mathbb{C}}
\newcommand{\Q}{\mathbb{Q}}
\newcommand{\Z}{\mathbb{Z}}
\newcommand{\ii}{\sqrt{-1}}
\newcommand{\s}{\sigma}
\newcommand{\bs}{\barsigma}
\newcommand{\pa}{\partial}
\newcommand{\ve}{\varepsilon}
\newcommand{\GG}{GG}
\newcommand{\OM}{\mathscr{O}\hspace{-.4em}\mathscr{M}}
\def\change#1{{ #1}}
\title{Global analysis of GG systems}
\author{Saiei-Jaeyeong Matsubara-Heo\footnote{Graduate School of Science, Kobe  University, 1-1 Rokkodai, Nada-ku, Kobe 657-8501, Japan.\newline e-mail: \texttt{saiei@math.kobe-u.ac.jp}}}
\begin{document}

\date{}
\maketitle

\begin{abstract}      
This paper deals with some analytic aspects of GG system introduced by I.M.Gelfand and M.I.Graev: we compute the dimension of the solution space of GG system over the field of meromorphic functions periodic with respect to a lattice. We describe the monodromy invariant subspace of the solution space. We give a connection formula between a pair of bases consisting of $\Gamma$-series solutions of GG  system associated to a pair of regular triangulations adjacent to each other in the secondary fan.
\end{abstract}

\maketitle

\section{Introduction}
In the 80's and 90's, the general study of hypergeometric functions made progress in a series of papers by I.M.Gelfand, M.M.Kapranov, and A.V.Zelevinsky (\cite{GKZToral}, \cite{GKZEuler}, \cite{GKZbook}). One of the new perspectives of their study is that there is a combinatorial structure of convex polytopes behind hypergeoemetric systems. GKZ system is a system of linear partial differential equations determined by two inputs: an $n\times N$ ($n<N$) integer matrix $A=(a_{ij})$ and a parameter vector $c\in\C^{n}$. GKZ system $M_A(c)$ is defined by
\begin{subnumcases}{M_A(c):}
E_i\cdot f(z)=0 &($i=1,\dots, n$)\label{EulerEq}\nonumber\\
\Box_u\cdot f(z)\hspace{-0.8mm}=0& $\left(u={}^t(u_1,\dots,u_N)\in L_A=\Ker(A\times:\Z^{N}\rightarrow\Z^{n})\right)$,\label{ultrahyperbolic}\nonumber
\end{subnumcases}
where $E_i$ and $\Box_u$ are differential operators defined by 

\begin{equation}\label{HGOperators}
E_i=\sum_{j=1}^Na_{ij}z_j\frac{\partial}{\partial z_j}+c_i,\;\;\;
\Box_u=\prod_{u_j>0}\left(\frac{\partial}{\partial z_j}\right)^{u_j}-\prod_{u_j<0}\left(\frac{\partial}{\partial z_j}\right)^{-u_j}.
\end{equation}

\noindent
We write ${\bf a}(j)$ for the $j$-th column vector of $A$. A fundamental property that GKZ system $M_A(c)$ enjoys is the holonomicity (\cite[THEOREM 3.9]{Adolphson}) and in particular, the finiteness of the dimension of the solution space. Many classical hypergeoemetric systems are realized as particular examples of GKZ system for special choices of the configuration matrices $A$. GKZ system also appears naturally in various contexts of applications such as mirror symmetry (\cite{Batyrev}, \cite{StienstraMirror}) and algebraic statistics (\cite{KTT}, \cite{GKTT}).   


An important open question is to understand the monodromy representation of GKZ system. More concretely, one hopes to find explicit monodromy matrices with respect to a given basis of solutions and a set of generators of the fundamental group of the complement of the singular locus. However, there are two difficulties: Firstly, GKZ system $M_A(c)$ behaves in a singular way when the parameter $c$ takes a special value. A typical example of this phenomenon is the discontinuity of the rank (\cite{SST}, \cite{MMW}). Secondly, the structure of the fundamental group is not yet fully understood. It is known that the singular locus of the GKZ system is (contained in) a product of principal $A$-determinants (\cite{BerkeschMatusevichWalther}). However, the degree of the defining equation of the principal $A$-determinant can be too big even if $A$ is relatively small (\cite[Chapter 1]{GKZLeningrad}). Note that there is an interesting recent work \cite{FM} on the fundamental group of the complement of the principal $A$-determinant.


Despite these difficulties, several people made progress on the global analysis of GKZ system. Let us review some of the preceding results on the monodromy of GKZ system when the parameter $c$ is generic. The genericity of the parameter $c$ plays a key role when we take a specific basis of the solution space. Let us also assume that the GKZ system $M_A(c)$ in question is regular holonomic. Note that $M_A(c)$ is regular holonomic if and only if the configuration matrix $A$ is homogeneous, i.e., there is a linear function $\phi:\Z^n\rightarrow \Z$ such that $\phi({\bf a}(j))=1$ for any $j=1,\dots,N$ (\cite{Hotta}, \cite{SchulzeWaltherIrregularity}, \cite{FernandezFernandez}). Recall that the totality of regular polyhedral subdivisions has the structure of a convex polyhedral fan, which is called the secondary fan denoted by ${\rm Fan}(A)$ in this paper (\cite[Chapter 7]{GKZbook}). To each regular triangulation $T$, we can associate a basis $\Phi_T$ of series solutions of $M_A(c)$ convergent on a suitable open subset $U_T$ of $\C^N$ (\cite{GKZToral}, \cite{SST}). When the configuration matrix $A$ comes from a product of simplices $\Delta_1\times \Delta_{n-1}$, Mutsumi Saito and Nobuki Takayama gave an explicit description of the connection matrices among bases $\Phi_T$ in \cite{SaitoTakayama}. They fully utilized the fact that the open cones of the secondary fan and elements of the permutation group $\mathfrak{S}_n$ are in one-to-one correspondence. In this way, the connection matrices they obtained are labeled by the permutation group $\mathfrak{S}_n$. Another approach to the connection problem was proposed by Frits Beukers in \cite{BeukersMellinBarnes}. Under a suitable assumption on the configuration matrix $A$, he constructed a special basis $\Phi$ of solutions in terms of Mellin-Barnes integral. Taking residues of the integrand in one direction, he succeeded in showing a relation between the bases $\Phi$ and $\Phi_T$ for each regular triangulation $T$, which is utilized to compute the connection matrices among bases $\Phi_T$. Though this method is useful, a basis consisting of Mellin-Barnes integral does not always exist. In fact, GKZ systems corresponding to Appell-Lauricella's $F_A,F_B$ and $F_D$ have such a basis while $F_C$ does not.

In this paper, we address the problem of computing connection matrices among bases $\Phi_T$ of GKZ system $M_A(c)$ with a generic parameter from another point of view. Namely, we give a combinatorial description of the connection matrix between $\Phi_T$ and $\Phi_{T^\prime}$ for any regular triangulations $T$ and $T^\prime$. For this purpose, we only need to discuss the case when $T$ is adjacent to $T^\prime$, that is, when the cone of the secondary fan corresponding to $T$ shares a facet with that corresponding to $T^\prime$. When $T$ is adjacent to $T^\prime$, $T$ and $T^\prime$ are related to each other by a combinatorial operation called {\it modification} (\cite[Chap.7, \S2.C]{GKZbook}). Therefore, it is natural to expect that the connection matrix between $\Phi_T$ and $\Phi_{T^\prime}$ is described in terms of modification. A useful method of performing an analytic continuation of a solution of a system of partial differential equations with regular singularities is the method of boundary value problem (\cite{Heckman}, \cite{KashiwaraOshima}). The use of this method in the context of GKZ system has already been indicated in \cite[Theorem 1.3]{SaitoTakayama}. When $T$ is adjacent to $T^\prime$, \cite[Theorem 1.4]{SaitoTakayama} shows that the boundary value problem along a particular coordinate subspace is naturally defined and the connection matrix for boundary values gives rise to that between $\Phi_T$ and $\Phi_{T^\prime}$. 


In this paper, we provide another perspective to the boundary value problem by employing the viewpoint of GG system, a system of linear partial difference-differential equations on $\C^N\times\C^n$ (\cite{GelfandGraevGG}):

\begin{subnumcases}{\GG(A):}
E_i\cdot f(z;c)=0 &($i=1,\cdots, n$)\label{EulerEq2}\\
\frac{\partial}{\partial z_j} f(z;c)\hspace{-0.8mm}=f(z;c+{\bf a}(j))& ($j=1,\dots,N$).\label{contiguity}
\end{subnumcases}
Solutions of GKZ system $M_A(c)$ with a generic parameter $c$ are naturally regarded as those of GG system (\cite[Theorem 4]{GelfandGraevGG}). A crucial difference, however, is that the parameter vector $c$ is now regarded as an independent variable. For this reason, it is natural to regard the solution space $\sol_{GG(A)}$ of $GG(A)$ as a vector space over a \change{fraction field of exponential polynomials in $c$}. In this sense, $\Phi_T$ is also a basis of $\sol_{GG(A)}$. Let us also remark that the viewpoint of GG system naturally encodes the contiguity structure of hypergeometric functions. Therefore, saying that a function $f(z;c)$ is a solution of GG system has more information than saying that it is a solution of a GKZ system.

In the setting of GG system, we can prove a unique solvability of the boundary value problem (Theorem \ref{prop:BVP}) on the level of formal solutions, which plays the role of \cite[Theorem 1.3]{SaitoTakayama}. Thanks to the appearance of the variable $c$, we can also describe the inverse of the boundary value map as a difference operator of infinite order $\mathfrak{D}$. 
Since this operator $\mathfrak{D}$ may produce a divergent series, we should establish some estimates to justify our argument which will be carried out in \S\ref{sec:connection}. We also construct a path of analytic continuation on which our estimate is valid. The main result Theorem \ref{thm:main} is given in \S\ref{subsec:conn}.

Another usage of the boundary value problem is the construction of a monodromy invariant subspace. It was conjectured in \cite{FF2} that an irregular GKZ system has a monodromy invariant subspace associated to any facet of its Newton polytope which does not contain the origin. In \S2, we prove this conjecture on the level of GG system.

This paper consists of two parts. \S\ref{sec:2} is devoted to a preliminary study of GG system. Throughout this section, we do not assume that the configuration matrix is homogeneous. In \S\ref{subsec:2.1}, we deal with series solutions \change{and integral representations} of GG systems. After we recall the relation between series solutions and the combinatorics of the secondary fan, we \change{prove an isomorphism between a homology group and the solution space of GG system and} provide the formula of the dimension of the solution space (Theorem \ref{thm:2.3}). This formula can be seen as an analytic counterpart of \cite[Theorem 3]{OharaTakayama}. In \S\ref{subsec:2.2}, we establish the unique solvabillity of the boundary value problem (Theorem \ref{prop:BVP}). \S\ref{subsec:2.3} is independent of the discussion of the next section, but we give an interesting application of Theorem \ref{prop:BVP}. Namely, we prove that the combinatorics of the Newton polytope gives rise to a decomposition of the solution space $\sol_{GG(A)}$ into monodromy invariant subspaces (Theorem \ref{thm:2.7}). \S\ref{sec:connection} is devoted to the formulation and the proof of the main theorem of this paper, a connection formula. In this section, we assume that $A$ is homogeneous. In \S\ref{subsec:3.2}, we recall the notion of modification and the well-known method of analytic continuation by means of Mellin-Barnes integral in our setting (\cite[Chap.4]{SlaterBook}). In \S\ref{subsec:3.1}, we give some combinatorial lemmata related to the secondary fan. In \S\ref{subsec:3.3}, we establish estimates of difference operators utilizing an integral operator called Erd\'elyi-Kober operator. In \S\ref{subsec:conn}, we prove the connection formula in Theorem \ref{thm:main}.

Throughout this paper, we use the following notation: for any vectors ${\bf a}=(a_1,\dots,a_n),{\bf b}=(b_1,\dots,b_n)\in\C^n$, we set $|{\bf a}|=a_1+\cdots+a_n$, $e^{2\pi\ii{\bf a}}=(e^{2\pi\ii a_1},\dots,e^{2\pi\ii a_n})$, ${\bf a}{\bf b}=(a_1b_1,\dots,a_nb_n)$, and ${\bf a}^{\bf b}=a_1^{b_1}\cdots a_n^{b_n}$. We write ${\rm diag}(a_i)_{i=1}^r$ for an $n\times n$ diagonal matrix whose diagonal entries are given by $a_i$. For any univariate function $F$, we write $F({\bf a})$ for the product $F(a_1)\cdots F(a_n)$.  For any $1\times n$ row vector $z$ and any $n\times m$ matrix $B=({\bf b}(1)|\cdots|{\bf b}(m))$, we write $z^B$ for the row vector $(z^{{\bf b}(1)},\dots,z^{{\bf b}(m)})$. The symbol $\Z B$ denotes the lattice in $\Z^n$ generated by column vectors of $B$. If $b_{ij}$ is the $(i,j)$-entry of the matrix $B$, we write $(b_{ij})_{i,j}$ for the matrix $B$.

\section{Basic properties of GG system}\label{sec:2}
In this section, we establish some basic properties of GG system. Throughout this section, $A$ denotes an $n\times N$ integer matrix with $n<N$. For any subset $\s\subset \{ 1,\dots,N\}$, we write $A_{\s}$ for the matrix whose column vectors consist precisely of $j$-th column vectors of $A$ for all $j\in\s$. We set $\bs:=\{ 1,\dots,N\}\setminus\s$. Note that $A_{\s}$ (resp. $A_{\s}^{-1}$) is regarded as a matrix whose rows are labeled by the set $\{ 1,\dots,n\}$ (resp. \change{labeled by the set }$\s$) and whose columns are labeled by the set $\s$ (resp. \change{labeled by the set} $\{ 1,\dots,n\}$). We say that $\s$ is a simplex if its cardinality $|\s|$ is $n$ and $\det A_{\s}\neq 0$. For any simplex $\s$, $i\in\s$ and a column vector $v\in\C^{n}$, we write  $p_{\s i}(v)$ for the $i$-th entry of the vector $A^{-1}_\s v$.

\subsection{Integral representations and the dimension of the solution space}\label{subsec:2.1}


In this subsection, we assume that the column vectors of $A$ generate the lattice $\Z^n$. We fix a simplex $\s\subset \{ 1,\dots,N\}$. For any partition $\s=\s^u\sqcup\s^d$, we put
\begin{equation}\label{eqn:2.1}
\psi_{\s^d}^{\s^u}(z;c)=
\sum_{{\bf m}\in\Z^{\bs}_{\geq 0}}
\frac{
\displaystyle
\prod_{i\in\s^u}\Gamma(p_{\s i}(c+A_{\bs}{\bf m}))
}{
\displaystyle
\prod_{i\in\s^d}\Gamma(1-p_{\s i}(c+A_{\bar{\sigma}}{\bf m})){\bf m!}
}
\prod_{i\in\s^u}(e^{\pi\ii}z_i)^{-p_{\s i}(c+A_{\bar{\sigma}}{\bf m})}
\prod_{i\in\s^d}z_i^{-p_{\s i}(c+A_{\bar{\sigma}}{\bf m})}
z_{\bar{\sigma}}^{\bf m}.
\end{equation}

\noindent
For any choice of $\tilde{\bf k}\in\Z^{\s}$, we set $\psi_{\s^d,\tilde{\bf k}}^{\s^u}(z;c):=\psi_{\s^d}^{\s^u}(e^{2\pi\ii\tilde{\bf k}}z_\s,z_{\bs};c)$.

\begin{prop}
$\psi_{\s^d,\tilde{\bf k}}^{\s^u}(z;c)$ is a formal solution of $\GG(A)$.
\end{prop}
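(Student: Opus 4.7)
First I would reduce to the untwisted case $\tilde{\bf k}=0$. The Euler operators $E_i=\sum_j a_{ij}z_j\partial_{z_j}+c_i$ are invariant under any coordinate rescaling $z_j\mapsto \lambda_j z_j$ because $z_j\partial_{z_j}$ is, so the Euler equations pass freely under the twist. For the contiguity relations $\partial_{z_j}f(z;c)=f(z;c+{\bf a}(j))$, the chain rule applied to $z_j\mapsto e^{2\pi\ii\tilde k_j}z_j$ ($j\in\s$) produces a Jacobian $e^{2\pi\ii\tilde k_j}$, which equals $1$ since $\tilde k_j\in\Z$; for $j\in\bs$ there is nothing to do. Hence it is enough to prove that $\psi_{\s^d}^{\s^u}(z;c)$ itself is a formal solution of $\GG(A)$.

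For the Euler equations I argue term by term. Each summand $\phi_{\bf m}(z;c)$ of \eqref{eqn:2.1} is, up to a scalar depending on $({\bf m},c)$, a generalized monomial whose exponent vector $\alpha\in\C^N$ is $\alpha_i=-p_{\s i}(c+A_{\bs}{\bf m})$ for $i\in\s$ and $\alpha_j=m_j$ for $j\in\bs$. Using $A_\s A_\s^{-1}=I$ one computes
\[
A\alpha \;=\; -A_\s A_\s^{-1}(c+A_{\bs}{\bf m})+A_{\bs}{\bf m} \;=\; -c,
\]
so $E_i\phi_{\bf m}=\bigl(\sum_j a_{ij}\alpha_j+c_i\bigr)\phi_{\bf m}=0$ for every ${\bf m}$.

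The contiguity relations I would check by matching summands of $\partial_{z_j}\psi_{\s^d}^{\s^u}(z;c)$ with those of $\psi_{\s^d}^{\s^u}(z;c+{\bf a}(j))$, using $\Gamma(s+1)=s\Gamma(s)$ to convert a shift in $c$ into the factor produced by differentiation. Three cases arise. If $j\in\bs$ then $\partial_{z_j}\phi_{\bf m}=m_j z_j^{-1}\phi_{\bf m}$; reindexing ${\bf m}={\bf m}'+e_j$, the prefactor $m_j$ absorbs the new $(m'_j+1)$ from $({\bf m}'+e_j)!=(m'_j+1)\,{\bf m}'!$, and the identity $A_\bs({\bf m}'+e_j)=A_{\bs}{\bf m}'+{\bf a}(j)$ rewrites the remaining data in terms of the shifted parameter. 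If $j\in\s^d$ then $\partial_{z_j}\phi_{\bf m}=-p_{\s j}(c+A_{\bs}{\bf m})\,z_j^{-1}\phi_{\bf m}$; since $A_\s^{-1}{\bf a}(j)$ is the $j$-th standard basis vector in $\Z^\s$, the shift $c\mapsto c+{\bf a}(j)$ increases only $p_{\s j}$ by $1$, and the identity $\Gamma(1-p)=-p\,\Gamma(-p)$ applied to the denominator produces exactly the factor $-p_{\s j}(c+A_{\bs}{\bf m})$. If $j\in\s^u$ the same shift combined with $\Gamma(p+1)=p\,\Gamma(p)$ in the numerator gives $+p_{\s j}(c+A_{\bs}{\bf m})$, while the branch factor $(e^{\pi\ii})^{-p-1}=-(e^{\pi\ii})^{-p}$ supplies the compensating minus sign.

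The main obstacle I foresee is the sign bookkeeping in the last two cases: the splitting $\s=\s^u\sqcup\s^d$ and the branch factor $e^{\pi\ii}$ appearing in \eqref{eqn:2.1} are arranged precisely so that the Gamma manipulation—numerator for $i\in\s^u$, denominator for $i\in\s^d$—always returns the matching sign $-p_{\s j}$. Once that is verified in all three cases the formal series identity $\partial_{z_j}\psi_{\s^d}^{\s^u}(z;c)=\psi_{\s^d}^{\s^u}(z;c+{\bf a}(j))$ follows termwise, completing the proof.
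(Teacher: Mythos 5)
Your proposal is correct and follows essentially the same route as the paper: reduce to $\tilde{\bf k}=0$, verify the Euler equations from the exponent identity $A\alpha=-c$ (the paper packages this as the quasi-homogeneity $\psi(t^A\cdot z;c)=t^{-c}\psi(z;c)$ and differentiates in $t$, but that relation is proved by exactly your computation), and verify the contiguity relations termwise by reindexing for $j\in\bs$ and by $\Gamma$-shift plus sign tracking through the $e^{\pi\ii}$ branch factor for $j\in\s$. Your case analysis and sign bookkeeping all check out.
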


\begin{proof}
It is enough to prove the proposition for $\psi_{\s^d}^{\s^u}(z;c)$. For any $t\in(\C^*)^{n\times 1},$ we can easily see that $\psi_{\s^d}^{\s^u}(t^A\cdot z;c)=t^{-c}\psi_{\s^d}^{\s^u}(z;c)$. Taking the partial derivative in the variable $t_i$ and substituing $t_1=\cdots =t_n=1$, we obtain the equation (\ref{EulerEq}). 

Suppose that $j\in\bs$. We write ${\bf e}_j\in\Z^{\bs}$ for the vector whose entries are 0 except for the $j$-th entry and $j$-th entry of which is $1$.  Then,
\begin{align}
  &\frac{\pa}{\pa z_j}\psi_{\s^d}^{\s^u}(z;c)\\
=&\sum_{{\bf m}-{\bf e}_j\in\Z^{\bs}_{\geq 0}}
\frac{
\displaystyle
\prod_{i\in\s^u}\Gamma(p_{\s i}(c+{\bf a}(j)+A_{\bs}({\bf m}-{\bf e}_j)))
}{
\displaystyle
\prod_{i\in\s^d}\Gamma(1-p_{\s i}( c+{\bf a}(j)+A_{\bar{\sigma}}({\bf m}-{\bf e}_j)))({\bf m}-{\bf e}_j)!
}
\times\nonumber\\
&\prod_{i\in\s^u}(e^{\pi\ii}z_i)^{-p_{\s i}(c+{\bf a}(j)+A_{\bar{\sigma}}({\bf m}-{\bf e}_j))}\prod_{i\in\s^d}z_i^{-p_{\s i}(c+{\bf a}(j)+A_{\bar{\sigma}}({\bf m}-{\bf e}_j))}z_{\bar{\sigma}}^{{\bf m}-{\bf e}_j}\\
=&\psi_{\s^d}^{\s^u}(z;c+{\bf a}(j)).
\end{align}

\noindent
Next, let us fix any $i_0\in\s^u$. We have
\begin{align}
  &\frac{\pa}{\pa z_{i_0}}\psi_{\s^d}^{\s^u}(z;c)\\
=&\sum_{{\bf m}\in\Z^{\bs}_{\geq 0}}
\frac{
\displaystyle
\prod_{i\in\s^u}\Gamma(p_{\s i}(c+{\bf a}(i_0)+A_{\bs}{\bf m}))
}{
\displaystyle
\prod_{i\in\s^d}\Gamma(1-p_{\s i}(c+A_{\bar{\sigma}}{\bf m})){\bf m!}
}
\prod_{i\in\s^u}
(e^{\pi\ii}z_i)^{-p_{\s i}(c+{\bf a}(i_0)+A_{\bar{\sigma}}{\bf m})}
\prod_{i\in\s^d}
z_i^{-p_{\s i}(c+{\bf a}(i_0)+A_{\bar{\sigma}}{\bf m})}
z_{\bar{\sigma}}^{\bf m}\\
=&\psi_{\s^d}^{\s^u}(z;c+{\bf a}(i_0)).
\end{align}

\noindent
The case when $i_0\in\s^d$ can be proved in a similar way.
\end{proof}

We briefly recall the definition of a regular polyhedral subdivision \change{(\cite[Chapter CHAPTER 7]{GKZbook}, \cite[CHAPTER8]{SturmfelsLecture})}. In general, for any subset $\sigma$ of $\{1,\dots,N\},$ we write $\cone(\sigma)$ for the positive span of \change{$\{{\bf a}(i)\}_{i\in\s}$,} i.e., $\cone(\sigma)=\sum_{i\in\sigma}\R_{\geq 0}{\bf a}(i).$ We often identify a subset $\sigma\subset\{1,\dots,N\}$ with the corresponding set of vectors $\{{\bf a}(i)\}_{i\in\sigma}$ or with the set $\cone(\s)$. A collection \change{$S$} of subsets of $\{1,\dots,N\}$ is called a \change{\it polyhedral subdivision} if $\{\cone(\sigma)\mid \sigma\in S\}$ is the set of cones in a \change{polyhedral} fan whose support equals $\cone(A)$. We write $(\Z^{N})^\vee$ for the dual lattice of $\Z^{N}$.
We write $\pi_A:(\Z^{N})^\vee\rightarrow L_A^\vee$ for the dual of the natural inclusion $L_A\hookrightarrow \Z^{N}$ where $L_A^\vee$ is the dual lattice $\Hom_{\Z}(L_A,\Z)$. By abuse of notation, we still write $\pi_A:(\R^{N})^\vee\rightarrow L_A^\vee\underset{\Z}{\otimes}\R$ for the linear map $\pi_A\underset{\Z}{\otimes}{\rm id}_{\R}$ where ${\rm id}_{\R}:\R\rightarrow\R$ is the identity map. For any cone $C\subset \R^N$, the symbol $C^\vee$ denotes its dual cone.
We often identify $(\R^N)^\vee$ with the set of row vectors via dot product. Then, for any choice of a vector $\omega\in\pi_A^{-1}\left(\pi_A((\R^{N}_{\geq 0})^\vee)\right),$ we can define a polyhedral subdivision $S(\omega)$ as follows: A subset $\sigma\subset\{1,\dots,N\}$ belongs to $S(\omega)$ if there exists a row vector ${\bf n}\in\R^{1\times n}$ such that ${\bf n}\cdot{\bf a}(i)=\omega_i$ if $i\in\sigma$ and ${\bf n}\cdot{\bf a}(j)<\omega_j$ if $ j\in\barsigma$. A polyhedral subdivision $S$ is called a {\it regular polyhedral subdivision} if $S=S(\omega)$ for some $\omega.$ Given a regular polyhedral subdivision $S$, we write $C_S\subset (\R^N)^\vee$ for the cone consisting of vectors $\omega$ such that $S(\omega)=S$. If any maximal (with respect to inclusion) element $\s$ of a regular polyhedral subdivision $T$ is a simplex, we call $T$ a {\it regular triangulation}. For a fixed regular triangulation $T$, we say that the parameter vector $c$ is {\it very generic} if $A_\s^{-1}(c+{\bf k})$ has no integral entry for any  simplex $\sigma\in T$ and any ${\bf k}\in\Z^n$. Let us put $H_\sigma=\{ j\in\{ 1,\dots, N \}\mid |A_\sigma^{-1}{\bf a}(j)|=1\}$. We set 
\begin{equation}
U_\sigma=\left\{z\in(\C^*)^N\mid {\rm abs}\left(z_\sigma^{-A_\sigma^{-1}{\bf a}(j)}z_{j}\right)<R, \text{for all } a(j)\in H_\sigma\setminus\sigma\right\},
\end{equation}
where $R>0$ is a small positive real number and abs stands for the absolute value. Recall that a regular triangulation $T$ is said to be {\it convergent} if for any $n$-simplex $\s\in T$ and for any $j\in \bs$, one has the inequality $|A_\sigma^{-1}{\bf a}(j)|\leq 1$ (\cite[Definition 5.2]{MatsubaraEulerLaplace}). \change{Note that a convergent regular triangulation always exists.}

\begin{prop}\label{prop:2.2}
Fix a convergent regular triangulation $T$, a partition $\s=\s^u\sqcup\s^d$ for each $\s\in T$, and a complete set of representatives $\left\{ \tilde{\bf k}(i)\right\}_{i=1}^{r_\s}$ of $\Z^{\s}/\Z {}^tA_\sigma$. Then, a set of functions  
$\Phi_T:=\displaystyle\bigcup_{\sigma\in T}\left\{ \psi_{\s^d,\tilde{\bf k}(i)}^{\s^u}(z;c)\right\}_{i=1}^{r_\s}$ 
is a set of linearly independent \change{(over the field of translation invariant meromorphic functions in $c$)} holomorphic solutions of $GG(A)$ on $U_{T}:=\displaystyle\bigcap_{\sigma\in T}U_\sigma\neq\varnothing$ where $r_\s$ is the cardinality of the group $\Z^{n}/\Z A_\sigma.$ 

\end{prop}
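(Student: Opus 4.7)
The plan is to verify three facts separately: each $\psi^{\s^u}_{\s^d,\tilde{\bf k}(i)}(z;c)$ is an honest holomorphic (not merely formal) solution on $U_T$, the common domain $U_T$ is nonempty, and the collection $\Phi_T$ is linearly independent over the field $\mathcal M$ of $\Z^n$-periodic meromorphic functions in $c$. That the series are formal solutions of $\GG(A)$ follows from Proposition~2.1 applied to $\psi^{\s^u}_{\s^d}$, together with the observation that the substitution $z_\s\mapsto e^{2\pi\ii\tilde{\bf k}}z_\s$ with $\tilde{\bf k}\in\Z^\s$ preserves both the Euler equations and the contiguity relations (the scaling factors are trivial because each $\tilde k_i$ is an integer); the verification therefore reduces to convergence, geometry, and linear independence.

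For convergence on $U_\s$ I would factor out the leading $({\bf m}=0)$ monomial
\[
M_\s(z;c):=\prod_{i\in\s^u}(e^{\pi\ii}z_i)^{-p_{\s i}(c)}\prod_{i\in\s^d}z_i^{-p_{\s i}(c)},
\]
and rewrite the quotient $\psi^{\s^u}_{\s^d}/M_\s$ as a power series in the new coordinates $\zeta_j:=z_\s^{-A_\s^{-1}{\bf a}(j)}z_j$ for $j\in\bs$. A Stirling-type estimate of the Gamma quotient combined with the inequality $|A_\s^{-1}{\bf a}(j)|\le 1$ from the definition of a convergent triangulation yields absolute convergence as soon as every $|\zeta_j|$ is smaller than a uniform constant, which is precisely the defining condition of $U_\s$. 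Hence the series converges and is holomorphic on $U_T=\bigcap_{\s\in T}U_\s$.

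To see that $U_T$ is nonempty I pick a weight $\omega$ in the interior of the secondary-fan cone $C_T$ and test the radial family $z_j=\ve^{\omega_j}e^{\ii\theta_j}$ with $\ve>0$ small and arbitrary phases. For every $\s\in T$ there exists ${\bf n}_\s\in\R^n$ with ${\bf n}_\s\cdot{\bf a}(i)=\omega_i$ for $i\in\s$ and ${\bf n}_\s\cdot{\bf a}(j)<\omega_j$ for $j\in\bs$; hence $\bigl|z_\s^{-A_\s^{-1}{\bf a}(j)}z_j\bigr|=\ve^{\omega_j-{\bf n}_\s\cdot{\bf a}(j)}\to 0$ as $\ve\to 0$, placing such $z$ inside $U_\s$ for every $\s\in T$ simultaneously.

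The main obstacle is the linear independence, which I would attack by initial-term analysis along the weight $\omega$ constructed above. Suppose that $\sum_{\s,i}f_{\s,i}(c)\,\psi^{\s^u}_{\s^d,\tilde{\bf k}(i)}(z;c)\equiv 0$ with $f_{\s,i}\in\mathcal M$. The $({\bf m}=0)$ contribution of $\psi^{\s^u}_{\s^d,\tilde{\bf k}(i)}$ is an explicit Gamma product times $e^{-2\pi\ii\tilde{\bf k}(i)\cdot A_\s^{-1}c}M_\s(z;c)$, while every higher term picks up an additional factor $z_{\bs}^{\bf m}$ that decays strictly faster in the $\omega$-direction. Extracting the asymptotically dominant part as $\ve\to 0$, for very generic $c$, separates the relation into blocks indexed by $\s\in T$: the leading monomials $M_\s$ for different $\s$ have distinct $z$-exponent supports (some variable is in $\s$ but not $\s'$), so no cancellation between blocks is possible. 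Inside each block one is left with $\sum_{i=1}^{r_\s}f_{\s,i}(c)\,e^{-2\pi\ii\tilde{\bf k}(i)\cdot A_\s^{-1}c}=0$, and since $\{\tilde{\bf k}(i)\}_{i=1}^{r_\s}$ are pairwise distinct modulo $\Z\,{}^tA_\s$, the exponential characters $c\mapsto e^{-2\pi\ii\tilde{\bf k}(i)\cdot A_\s^{-1}c}$ are pairwise distinct modulo $\Z^n$-translations in $c$, hence linearly independent over $\mathcal M$; this forces every $f_{\s,i}$ to vanish. The delicate point will be to make the dominant-term extraction rigorous when higher-order contributions from one simplex $\s'$ produce monomials coinciding with the leading term from another $\s$ up to integer shifts absorbed into $z_{\bs'}^{{\bf m}}$; I expect this to require a finite induction on the partial order on $\bigsqcup_\s\Z^{\bs}_{\ge 0}$ induced by the weight $\omega$, peeling off the blocks in order of decreasing dominance.
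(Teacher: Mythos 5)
Your route is genuinely different from the paper's. The paper's proof reduces the question to the linear independence of the alternative $\Gamma$-series $\varphi_{\s^d,{\bf k}(j)}^{\s^u}$ (which are supported on pairwise disjoint cosets $\Lambda_{{\bf k}(j)}\subset\Z^{\bar\s}_{\geq 0}$ and thus have pairwise distinct initial $z$-exponents), cites \cite{FernandezFernandez} for that independence, and then observes that the explicit change-of-basis matrix in (\ref{eqn:PhiPsi}) is a diagonal invertible matrix times a (normalized) unitary one, so invertibility of the transition matrix transfers the independence to the $\psi$'s. You instead argue directly on the $\psi$'s by a leading-exponent analysis in the $\omega$-direction for $\omega\in C_T$, separating the $\sigma$-blocks via initial exponents and then resolving the within-block degeneracy (the $\psi^{\s^u}_{\s^d,\tilde{\bf k}(i)}$ for fixed $\sigma$ share the same monomial support) via the distinct characters $c\mapsto e^{-2\pi\ii{}^t\tilde{\bf k}(i)A_\s^{-1}c}$. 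This is a viable alternative route, and your character argument is exactly the point that the paper packages into its unitary matrix. What the paper buys with its change of basis is that all the separation work becomes a single appeal to the well-known independence of $\Gamma$-series with distinct leading exponents.

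One remark about the ``delicate point'' you flag at the end: the worry that higher-order terms of $\psi_{\s'}$ could have $z$-exponent colliding with the leading exponent of $\psi_\s$ does not actually arise once $c$ is very generic with respect to $T$, which is the regime in which you would carry out the fixed-$c$ dominant-term extraction. Indeed, for $j\in\s\setminus\s'$ (nonempty whenever $\s\ne\s'$) the $j$-th entry of the exponent of any term of $\psi_{\s'}$ is $m'_j\in\Z_{\geq 0}$, whereas the $j$-th entry of the leading exponent of $\psi_\s$ is $-p_{\s j}(c)$, which is non-integral by the very-genericity hypothesis; more generally $-p_{\s j}(c+A_{\bar\s}{\bf m})\notin\Z$ for every ${\bf m}$, so in fact no exponent from the $\s$-block coincides with any exponent from the $\s'$-block. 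Hence the blocks separate cleanly and the finite induction you anticipate is unnecessary. With that observation, and once the linear independence of the finitely many distinct characters of $\Z^n/\Z A_\s$ is invoked to resolve each block, your argument closes. You should, however, be explicit that you first fix $c$ very generic to conclude $\C$-linear independence in $z$ for such $c$, and then use the fact that a meromorphic function of $c$ vanishing on a dense set vanishes identically to return to independence over $\mathcal M$.
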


\begin{proof}
For any ${\bf k}\in\Z^{n}$, we put $\Lambda_{\bf k}=\{ {\bf k+m}\in\Z^{\bs}\mid A_{\bs}{\bf m}\in\Z A_\s\}$ and

\begin{align}
\varphi_{\s^d,{\bf k}}^{\s^u}(z;c)=&
\sum_{{\bf k+m}\in\Lambda_{\bf k}}
\frac{
\prod_{i\in\s^u}\Gamma(p_{\s i}(c+A_{\bs}({\bf k+m})))
}{
\prod_{i\in\s^d}\Gamma(1-p_{\s i}(c+A_{\bar{\sigma}}({\bf k+m})))({\bf  k+m})!
}
\times\nonumber\\
&\prod_{i\in\s^u}(e^{\pi\ii}z_i)^{-p_{\s i}(c+A_{\bar{\sigma}}({\bf k+m}))}
\prod_{i\in\s^d}z_i^{-p_{\s i}(c+A_{\bar{\sigma}}({\bf k+m}))}
z_{\bar{\sigma}}^{\bf k+m}.
\end{align}

\noindent
For any column vector $\tilde{\bf k}\in\Z^{\s}$, we easily see that the identity
\begin{equation}
\psi_{\s^d,\tilde{\bf k}}^{\s^u}(z;c)=e^{-2\pi\ii{}^t\tilde{\bf k}A_\s^{-1}c}\sum_{j=1}^{r_\s}e^{-2\pi\ii{}^t\tilde{\bf k}A_\s^{-1}A_{\bs}{\bf k}(j)}\varphi_{\s^d,{\bf k}(j)}^{\s^u}(z;c)
\end{equation}
holds. Here, ${}^t\tilde{\bf k}$ denotes the transpose of the column vector $\tilde{\bf k}$. Thus, we have
\begin{equation}\label{eqn:PhiPsi}
\begin{pmatrix}
\psi_{\s^d,\tilde{\bf k}(1)}^{\s^u}(z;c)\\
\vdots\\
\psi_{\s^d,\tilde{\bf k}(r_\s)}^{\s^u}(z;c)
\end{pmatrix}
=
\diag\left( e^{-2\pi\ii{}^t\tilde{\bf k}(i)A_\s^{-1}c}\right)_{i=1}^{r_\s}
\left( e^{-2\pi\ii{}^t\tilde{\bf k}(i)A_\s^{-1}A_{\bs}{\bf k}(j)}\right)_{i,j}
\begin{pmatrix}
\varphi_{\s^d,{\bf k}(1)}^{\s^u}(z;c)\\
\vdots\\
\varphi_{\s^d,{\bf k}(r_\s)}^{\s^u}(z;c)
\end{pmatrix}.
\end{equation}
Since it can be readily seen that the set $\displaystyle\bigcup_{\sigma\in T}\left\{ \varphi_{\s^d,{\bf k}(i)}^{\s^u}(z;c)\right\}_{i=1}^{r_\s}$ is linearly independent as in \S3 of \cite{FernandezFernandez} and that $\frac{1}{\sqrt{r_\s}}\left( e^{-2\pi\ii{}^t\tilde{\bf k}(i)A_\s^{-1}A_{\bs}{\bf k}(j)}\right)_{i,j}
$ is a unitary matrix, (\ref{eqn:PhiPsi}) shows the proposition.
\end{proof}

\begin{rem}
We set $\varphi_{\s,{\bf k}}(z;c):=\varphi_{\s,{\bf k}}^\varnothing (z;c)$. Using the reflection formula of Gamma function, it is straightforward to write down any $\varphi_{\s^d,{\bf k}}^{\s^u}(z;c)$ as a linear combination of $\varphi_{\s,{\bf k}}(z;c)$. The series $\varphi_{\s,{\bf k}}(z;c)$ is called a $\Gamma$-series (\cite[ \S1]{GKZToral}). We also call the series $\psi^{\s^u}_{\s^d,\tilde{\bf k}}(z;c)$ a $\Gamma$-series.
\end{rem}

\change{
Let us establish an isomorphism between a homology group and the solution space of GG system. First of all, we need to specify the function space on which we take solutions of GG system. Let $V$ be an subset of $\C^N$.
We write $f(z;c)\in\OM(V\times\C^n)$ if there exist linear forms $L_i$ with integral coefficients, complex numbers $\lambda_j$, integers $m_{i,j}$ and $h\in\C[c]$ such that $g(z;c):=h(c)\prod_{i,j}(\exp(2\pi\ii L_i(c))-\lambda_j)^{m_{i,j}}f(z;c)$ is holomorphic on $V\times\C^n$ and for any compact subset $K\subset V$ and for any pair of real numbers $r_i\leq R_i$ ($i=1,\dots,n$), there are constants $C>0$, $a>0$ and $R>0$ so that on the set $K\times \{ c\in\C^n\mid r_i\leq {\rm Re}\; c_i\leq R_i,\ |{\rm Im} c_i|\geq R\}$, one has an inequality $|g(z;c)|\leq C\exp\{ a\sum_{i=1}^n|{\rm Im} c_i|\}$.
Similarly, we write $\mathscr{M}(\C^n)$ for the set of functions $f(c)$ on $\C^n$ such that $pr^*f(z;c)\in\OM(V\times\C^n)$ where $pr:V\times\C^n\rightarrow\C^n$ is the projection.
For any pair of open subsets $V_1\subset V_2$, we can define a natural restriction morphism $\OM(V_2\times\C^n)\rightarrow \OM(V_1\times\C^n)$. The symbol $\OM_{z}$ denotes the inductive limit $\underset{z\in V}{\varinjlim}\OM(V\times\C^n)$.
We write $\C[T]_{loc}$ for the localization of $\C[T_1,\dots,T_n]$ obtained by localizing the elements of the form $T^L-\lambda$ where $L\in\Z^n$ and $\lambda\in\C$.
It is easy to see that the set $\{ f(c)\in\mathscr{M}(\C^n)\mid f(c+{\bf a}(j))=f(c)\text{ for any }j=1,\dots,N\}$ is isomorphic to $\C[T]_{loc}$ through the correspondence $T_i=e^{2\pi\ii c_i}$.
The fraction field of $\C[T_1,\dots,T_n]$ is denoted by $\C(T)$.  
We write $R_A$ for the ring of difference-differential operators $\C\langle z_1,\dots,z_N,\partial_1,\dots,\pa_N,c_1,\dots,c_n,\change{\tau}_1^\pm,\dots,\change{\tau}_n^\pm\rangle$ with relations $\change{\tau}_i c_i=(c_i+1)\change{\tau}_i$, $\change{\tau}_i^{-1}c_i=(c_i-1)\change{\tau}_i^{-1}$ and $\pa_iz_i=z_i\pa_i+1$ and other types of product of two generators commute. If $I_{A}$ denotes the left ideal of $R_A$ generated by $E_i$ ($i=1,\dots,n$) and $\pa_j-\change{\tau}^{{\bf a}(j)}$ ($j=1,\dots,N$), we set $GG(A):=R_A/I_A$.
$\OM_{z}$ admits a natural structure of a left $R_A$-module and $\Hom_{R_A}(GG(A),\OM_{z})$ is a module over $\C[T]_{loc}$ through the correspondence $T_i=e^{2\pi\ii c_i}$. We can define the $\C(T)$-vector space $\sol_{GG(A),z}$ of local solutions at $z$ by $\sol_{GG(A),z}:=\C(T)\otimes_{\C[T]_{loc}}\Hom_{R_A}(GG(A),\OM_{z})$.
}
\change{
We write $\Delta_A$ for the convex hull of the column vectors of $A$ and the origin. We write ${\rm Sing}(A)$ for the zero set of principal $A$-determinant, which is defined as a product of $A_{\Gamma}$-discriminants $D_{A_\Gamma}$ for any face $\Gamma$ of $\Delta_A$ which does not contain the origin (\cite[Chapter 9]{GKZbook}).
If $z\notin {\rm Sing}(A)$, any $f(z;c)\in\sol_{GG(A),z}$ is a solution of $M_A(c)$ for any generic $c\in\C^n$. Since the singular locus of $M_A(c)$ is contained in ${\rm Sing}(A)$ by \cite[Theorem 2.14]{SchulzeWaltherIrregularity}\footnote{In \cite{SchulzeWaltherIrregularity}, the configuration matrix $A$ is assumed to be pointed, i.e., it is assumed that there is a linear functional $\phi\in(\Z^n)^\vee$ so that $\phi({\bf a}(j))>0$ for any column vector ${\bf a}(j)$ of $A$.
However, a careful reading of \cite{SchulzeWaltherIrregularity} shows that when the weight vector $L$ of \cite{SchulzeWaltherIrregularity} is a positive vector, \cite[Theorem 2.14]{SchulzeWaltherIrregularity} is true without the assumption that $A$ is pointed.
See also \S5 of \cite{FernandezFernandez}.}, any local solution $f\in \sol_{GG(A),z}$ admits an analytic continuation along any path $\gamma$ in $\C^N\setminus {\rm Sing}(A)$.
}
\change{
Let us fix a point $z\in\C^N\setminus {\rm Sing}(A)$. We prove that the solution space $\sol_{GG(A),z}$ is isomorphic to a certain homology group as $\C(T)$-vector spaces and provide a formula of its dimension.
Below, we follow the construction of \cite{LS}. We set $h(x):=h(x;z):=\sum_{j=1}^Nz_jx^{{\bf a}(j)}$ and regard it as a regular function on $U:={\rm Spec}\;\C[x_i^\pm]$.
We write $p:\widetilde{U}\rightarrow U^{an}$ for the universal covering of a torus $U^{an}$.
We set $\mathcal{L}:=p_!\underline{\C}$ where $\underline{\C}$ is the constant sheaf on $\widetilde{U}$ of which a stalk at any point is $\C$.
Each stalk of $\mathcal{L}$ is naturally equipped with the action of the group of deck transformations ${\rm Deck}(\widetilde{U}/U^{an})$.
Let $T_i$ denotes the element of ${\rm Deck}(\widetilde{U}/U^{an})$ specified by the loop $x_i\mapsto e^{2\pi\ii}x_i$. Since ${\rm Deck}(\widetilde{U}/U^{an})$ is freely generated by $T_1,\dots,T_n$, we regard $\mathcal{L}$ as a $\C[T_1^\pm,\dots,T_n^\pm]$-local system of rank 1.
For any element $c\in\C^n$, we can define the specialization $\mathcal{L}_c:=\C[T_1^\pm,\dots,T_n^\pm]/\mathfrak{m}_c\otimes_{\C[T_1^\pm,\dots,T_n^\pm]}\mathcal{L}$ where $\mathfrak{m}_c$ is the maximal ideal corresponding to a point $(e^{2\pi\ii c_1},\dots,e^{2\pi\ii c_n})\in U$.
Let $X$ be a projective compactification of $U$ so that $h(x)$ extends to a morphism $h:X\rightarrow \PP^1$ and $D=X\setminus U$ is a normal crossing divisor (for a concrete construction of such a compactification, see \cite[ \S4]{EsterovTakeuchi} or \cite[ \S3]{MatsubaraEulerLaplace}).
We decompose $D$ as $D=D_{irr}\cup D_\infty$ where $D_\infty$ is the pole divisor of $h$.
Let $\varpi:\widetilde{X}\rightarrow X$ be the real oriented blow-up along $D$.
Then, a real submanifold $\widetilde{D^{r.d.}}$ of $\widetilde{X}$ is naturally defined as a set of rapid decay directions of the function $e^{h(x)}$ (for the precise definition, see [loc. cit.]).
Let $i:U^{an}\rightarrow U^{an}\cup \widetilde{D^{r.d.}}$ and $j:U\cup \widetilde{D^{r.d.}}\rightarrow\widetilde{X}$ be natural inclusions.
We set $\Homo_n^{r.d.}:=\Homo_n\left( U^{an}\cup \widetilde{D^{r.d.}},\widetilde{D^{r.d.}};\mathcal{L}\right)$ and $\Homo_{n,c}^{r.d.}:=\Homo_n\left( U^{an}\cup \widetilde{D^{r.d.}},\widetilde{D^{r.d.}};\mathcal{L}_c\right)$.
Since $\Homo_n^{r.d.}$ is isomorphic to $\Homo^n\left( \widetilde{X};j_!i_*\mathcal{L}\right)$, $\Homo_n^{r.d.}$ is a finitely generated $\C[T_1^\pm,\dots,T_n^\pm]$-module in view of a basic result of constructible sheaves \cite[10.16 THEOREM]{Borel}.
Thus, if we write $\C[T_1^\pm,\dots,T_n^\pm]_{loc}$ for an appropriate localization of $\C[T_1^\pm,\dots,T_n^\pm]$, $\Homo_n^{r.d.}\otimes_{\C[T_1^\pm,\dots,T_n^\pm]}\C[T_1^\pm,\dots,T_n^\pm]_{loc}$ is a free $\C[T_1^\pm,\dots,T_n^\pm]_{loc}$-module and we see that the natural morphism $\Homo_n^{r.d.}/\mathfrak{m}_c\Homo_n^{r.d.}\rightarrow\Homo_{n,c}^{r.d.}$ is an isomorphism for generic $c\in\C^n$.
If we set $\Homo_{n}^{r.d}(T):=\Homo_n^{r.d.}\otimes_{\C[T_1^\pm,\dots,T_n^\pm]}\C(T)$, the dimension of $\Homo_{n}^{r.d}(T)$ is equal to $\dim_{\C}\Homo_{n,c}^{r.d.}$ for generic $c$.
It is also straightforward to see that neither $\Homo_n^{r.d.}$ nor $\Homo_{n,c}^{r.d.}$ depends on the choice of a compactification $X$. Too see this, let $X^\prime$ be another compactification of $U$ such that $X^\prime\setminus U$ is a normal crossing divisor and $h$ extends to $h^\prime:X^\prime\rightarrow\PP^1$. We can consider a blow-up of the closure of the diagonal embedding $U\subset X\times X^\prime$ to construct a compactification $Y$ of $U$ and morphisms $\pi:Y\rightarrow X$, $\pi^\prime:Y\rightarrow X^\prime$ such that $D_Y:=Y\setminus U$ is a normal crossing divisor, $h$ extends to $h_1:Y\rightarrow \PP^1$, $\pi$ and $\pi^\prime$ are identities on $U$ and $h\circ\pi$ and $h^\prime\circ\pi^\prime$ are identical to the extension $h_1:Y\rightarrow\PP^1$.
We write $\widetilde{Y}$ for the real oriented blow-up of $Y$ along $D_Y$ and define $\widetilde{D_Y^{r.d.}}\subset \widetilde{Y}$ as before. If $i^Y:U^{an}\rightarrow U^{an}\cup \widetilde{D_Y^{r.d.}}$ and $j^Y:U^{an}\cup \widetilde{D_Y^{r.d.}}\rightarrow\widetilde{Y}$ denote natural inclusions, it is enough to prove the identity $\Homo^n\left( \widetilde{X};j_!i_*\mathcal{L}\right)=\Homo^n\left( \widetilde{Y};j^Y_!i^Y_*\mathcal{L}\right)$. In view of the fact $\R i_*\mathcal{L}=i_*\mathcal{L}$, $\R i^Y_*\mathcal{L}=i^Y_*\mathcal{L}$ and the induced morphism $\tilde{\pi}:\widetilde{Y}\rightarrow\widetilde{X}$ induces a proper morphism $\tilde{\pi}:U^{an}\cup \widetilde{D_Y^{r.d.}}\rightarrow U^{an}\cup \widetilde{D^{r.d.}}$, we obtain the desired identity in view of the following commutative diagram:
\begin{equation}
\xymatrix{
\widetilde{Y}\ar[r]^-{\tilde{\pi}}  &\widetilde{X}\\
U^{an}\cup \widetilde{D_Y^{r.d.}}\ar[r]^-{\tilde{\pi}}\ar[u]^-{j^Y} &U^{an}\cup \widetilde{D^{r.d.}}\ar[u]_-{j}\\
U^{an}\ar[r]^-{{\rm id}}\ar[u]^-{i^Y}&U^{an}\ar[u]_-{i}
}.
\end{equation}
\noindent
In the same manner, we can consider the algebraic de Rham cohomology group. Let $\Omega^p_U$ denote the sheaf of $p$-forms on $U$ and set $\Omega_U^p[c]:=\Omega_U^p\otimes_{\C}\C[c]$.
We set $\Homo_{dR}^n:=\mathbb{H}^n(U,(\Omega_U^\bullet[c],\nabla))$ where $\nabla=d_x+d_xh\wedge+\sum_{i=1}^nc_i\frac{dx_i}{x_i}\wedge$.
For a fixed $c\in\C^n$, we set  $\Homo_{dR,c}^n:=\mathbb{H}^n(U,(\Omega_U^\bullet,\nabla))$.
In view of the argument of \cite[ p470-p471]{LS}, the specialization morphism $\Homo^n_{dR}/\mathfrak{n}_c\Homo^n_{dR}\rightarrow\Homo_{dR,c}^n$ is an isomorphism for generic $c\in\C^n$ where $\mathfrak{n}_c$ is the maximal ideal corresponding to a point $c\in\C^n$.
We set $\Homo_{dR}^n(c):=\Homo_{dR}^n\otimes_{\C[c]}\C(c)$. Note that $\Homo_{dR}^n(c)$ carries a natural action of $\tau_i$ and is an example of {\it le complexe d'Aomoto} in the sense of \cite[ p470]{LS}.
Similarly, we set $\Homo_{dR}^n[z]:=\mathbb{H}^n(U,(\Omega_U^\bullet[z,c],\nabla))$ and $\Homo_{dR,c}^n[z]:=\mathbb{H}^n(U,(\Omega_U^\bullet[z],\nabla))$. The cohomology group $\Homo_{dR}^n[z]$ admits a natural action of the ring $R_A$ and especially that of its subalgebra $D_N$ generated by $z_1,\dots,z_N,\partial_1,\dots,\partial_N$ over $\C$.
The action of $\partial_j$ on $[\omega(z)]\in\Homo_{dR}^n[z]$ is given by $\partial_j\bullet[\omega(z)]=[\partial_j\omega(z)+x^{{\bf a}(j)}\omega(z)]$.
$\Homo_{dR,c}^n[z]$ also admits a natural action of $D_N$ with which the specialization morphism $\Homo^n_{dR}[z]/\mathfrak{n}_c\Homo^n_{dR}[z]\rightarrow\Homo_{dR,c}^n[z]$ is a homomorphism of left $D_N$-modules.
Note that $\Homo_{dR,c}^n[z]$ is a holonomic $D_N$-module isomorphic to GKZ system when $c$ is non-resonant. It is generated by the canonical element $[\frac{dx}{x}]$ over $D_N$ where $\frac{dx}{x}:=\frac{dx_1}{x_1}\wedge\cdots\wedge\frac{dx_n}{x_n}$ (\cite[Lemma 4.7]{EsterovTakeuchi}). For any $\overset{\circ}{z}\in\C^N\setminus {\rm Sing}(A)$, we write $\mathfrak{m}_{\overset{\circ}{z}}$ for the maximal ideal of $\C[z]$ corresponding to the point $\overset{\circ}{z}$.
The specialization morphism $\Homo^n_{dR,c}[z]/\mathfrak{m}_z\Homo^n_{dR,c}[z]\rightarrow\Homo^n_{dR,c}$ is an isomorphism in view of \cite[LEMMA 3.3]{Adolphson} and \cite[Corollary 3.8]{SchulzeWaltherLGM}.
}

\change{
For any $[\Gamma]\in\Homo_{n}^{r.d.}$ and $c\in\C^n$, we write $[\Gamma(c)]\in\Homo_{n,c}^{r.d.}$ for its specialization. For any $[\omega]\in\Homo_{dR}^n$, we see that the function $\C^n\ni c\mapsto \int_{\Gamma(c)}e^{h(x)}x^c\omega$ is a holomorphic function and this function is denoted by $\int_\Gamma e^{h(x)}x^c\omega$. For any polynomial $P\in\C[T_1,\dots,T_n]$ and $q(c)\in\C[c]$, we set $\int_{P(T)^{-1}\Gamma} e^{h(x)}x^c\frac{\omega}{q(c)}:=\frac{1}{P(e^{2\pi\ii c_1},\dots,e^{2\pi\ii c_n})q(c)}\int_\Gamma e^{h(x)}x^c\omega$. 
}
\change{
\begin{lem}\label{lem:growth}
Let $D(z;\ve)$ be a disk of radius $\ve>0$ centered at $z$. For any $[\Gamma]\in\Homo_{n}^{r.d.}$ and $[\omega]\in\Homo_{dR}^n(c)$, the period integral $\int_{\Gamma}e^{h(x;z)}x^c\omega$ defines a meromorphic function on $D(z;\ve)\times \C^n$ for a small $\ve>0$ and it belongs to $\OM(D(z;\ve)\times \C^n)$.
\end{lem}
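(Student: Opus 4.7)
The plan is to use the fact that $\OM(D(z;\varepsilon)\times\C^n)$ explicitly permits denominators of the form $q(c)\prod_{i,j}(e^{2\pi\ii L_i(c)}-\lambda_j)^{m_{i,j}}$, so any pole in $c$ coming from the localizations defining $\Homo_n^{r.d.}(T)$ and $\Homo_{dR}^n(c)$ will be absorbed into such prefactors. After that absorption, the problem reduces to showing that a ``genuine'' period integral against integer-coefficient data is jointly holomorphic and enjoys the desired exponential-type growth in ${\rm Im}\,c$.

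\textbf{Reduction to integer representatives.} Since $\Homo_{dR}^n(c)=\Homo_{dR}^n\otimes_{\C[c]}\C(c)$ and $\Homo_n^{r.d.}(T)=\Homo_n^{r.d.}\otimes_{\C[T^\pm]}\C(T)$, I would write $[\omega]=q(c)^{-1}[\omega_0]$ with $[\omega_0]\in\Homo_{dR}^n$ and $[\Gamma]=P(T)^{-1}[\Gamma_0]$ with $[\Gamma_0]\in\Homo_n^{r.d.}$. By the convention introduced just before the lemma,
\begin{equation*}
\int_{\Gamma}e^{h(x;z)}x^c\omega=\frac{1}{P(e^{2\pi\ii c})q(c)}\int_{\Gamma_0}e^{h(x;z)}x^c\omega_0.
\end{equation*}
The prefactor is exactly of the shape allowed in the denominator of an element of $\OM$. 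Hence it suffices to establish the conclusion for $F(z;c):=\int_{\Gamma_0}e^{h(x;z)}x^c\omega_0$, namely that $F$ is holomorphic on $D(z;\varepsilon)\times\C^n$ and satisfies $|F(z;c)|\le C\exp\{a\sum_{i=1}^n|{\rm Im}\,c_i|\}$ on $K\times\{r_i\le{\rm Re}\,c_i\le R_i,\ |{\rm Im}\,c_i|\ge R\}$ for compact $K\subset D(z;\varepsilon)$.

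\textbf{Joint holomorphicity and main estimate.} Represent $[\Gamma_0]$ by a finite sum of chains supported in $U^{an}\cup\widetilde{D^{r.d.}}$, lifted to the universal cover $\widetilde{U}$. The integrand $e^{h(x;z)}x^c\omega_0$ is jointly holomorphic in $(x,z,c)\in\widetilde{U}\times D(z;\varepsilon)\times\C^n$, and once the locally uniform integrability is in place, Morera's theorem (in each variable separately) and Fubini deliver joint holomorphy of $F$. For the estimate, note that on the lifted chain
\begin{equation*}
|x^c|=|x|^{{\rm Re}\,c}\exp\!\left(-\textstyle\sum_i{\rm Im}\,c_i\cdot\arg x_i\right),
\end{equation*}
and the arguments $\arg x_i$ are bounded by some $a>0$ because the chain is contained in a compact region of $\widetilde{X}$. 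Hence $|x^c|\le |x|^{{\rm Re}\,c}\exp\{a\sum|{\rm Im}\,c_i|\}$, and the $|{\rm Re}\,c|$-dependence is controlled uniformly on the strip $r_i\le{\rm Re}\,c_i\le R_i$ by $\max(|x|^{r_i},|x|^{R_i})$.

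\textbf{The main obstacle.} The delicate step is the uniform convergence near the boundary divisors of $\widetilde{X}$. Near $D_\infty$ the rapid decay of $e^{h(x;z)}$ in the directions specified by $\widetilde{D^{r.d.}}$ must dominate the at-most-polynomial growth $|x|^{{\rm Re}\,c}$ uniformly for ${\rm Re}\,c$ bounded; this follows from the fact that $e^{h(x;z)}$ decays faster than any polynomial along a rapid decay direction, and this decay is stable under small perturbations of $z$ so long as $z$ remains off $\text{Sing}(A)$ (which is why the disk radius $\varepsilon$ must be chosen small enough). Near $D_{irr}$ the integrability of $|x|^{{\rm Re}\,c}\omega_0$ is again a consequence of the chain being a relative cycle modulo $\widetilde{D^{r.d.}}$ together with the uniform bound on ${\rm Re}\,c$. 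Combining these local estimates with the factor $\exp\{a\sum|{\rm Im}\,c_i|\}$ extracted above yields the required bound with constants $C,a,R$ depending only on $K,r_i,R_i$, concluding that $F\in\OM(D(z;\varepsilon)\times\C^n)$.
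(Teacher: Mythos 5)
Your strategy for the growth estimate --- writing $|x^c|=|x|^{\mathrm{Re}\,c}\exp\bigl(-\sum_i\mathrm{Im}\,c_i\cdot\arg x_i\bigr)$, bounding $\arg x_i$ on the chain, and controlling the $\mathrm{Re}\,c$-part on a strip --- is exactly the mechanism in the paper's proof, which expresses $x^c$ in polar-type local coordinates $\prod_i r_i^{L_i(c)}e^{\ii L_i(c)\theta_i}\prod_i(\varpi^*g_i)^{c_i}$ near $\widetilde{D^{r.d.}}$ and reads off $\exp\{a\sum|\mathrm{Im}\,c_i|\}$ from the $\theta$-factors. However, two points you treat lightly are where the paper does real work.

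First, your claim that ``$\arg x_i$ is bounded because the chain is contained in a compact region of $\widetilde X$'' presupposes that $\arg x_i$ extends continuously to $\widetilde X$ over the support of the chain. The coordinate functions $x_i:U\to\C^*$ need not extend as morphisms $X\to\PP^1$: they can have indeterminacy loci in codimension $2$ on $D$, near which neither $|x_i|$ nor its argument admits a clean polar expression on the real oriented blow-up. The paper fixes this explicitly (``after a sequence of codimension $2$ blow-ups of $X$ if necessary, we may assume that $x_i:U\to\C$ also extends to $x_i:X\to\PP^1$''). Without this step, the very ingredient you lean on --- boundedness of $\arg x_i$ on the chain and the pointwise bound on $|x|^{\mathrm{Re}\,c}$ over the strip --- is not available in the stated form.

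Second, and more seriously, your reduction absorbs only the denominators coming from the localizations $\Homo_n^{r.d.}\otimes\C(T)$ and $\Homo_{dR}^n\otimes\C(c)$, and then asserts that the ``genuine'' period $\int_{\Gamma_0}e^{h}x^c\omega_0$ is holomorphic by Morera and Fubini, with integrability near $D_{irr}$ ``a consequence of the chain being a relative cycle modulo $\widetilde{D^{r.d.}}$.'' That last step does not hold: a relative cycle may approach $D_{irr}$, and there $|x|^{\mathrm{Re}\,c}$ (equivalently $\prod_ir_i^{L_i(\mathrm{Re}\,c)}$) can blow up when $L_i(\mathrm{Re}\,c)<0$; the raw integral is then divergent and only a regularization/meromorphic continuation produces a well-defined function, typically with additional poles of the allowed form $\prod(e^{2\pi\ii L_i(c)}-\lambda)^{m}$ in $c$. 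The paper does not attempt to prove holomorphy directly; it dispatches the meromorphy (your ``first assertion'') by citing the beginning of the proof of Theorem 3.11 of \cite{MatsubaraEulerLaplace}, and only then proves the growth estimate for the regularized function via the local coordinate expression. You should either invoke that result or supply a genuine regularization argument near $D_{irr}$; absolute convergence plus Morera/Fubini is not enough.
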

\begin{proof}
The first assertion follows from the beginning of the proof of \cite[Theorem 3.11]{MatsubaraEulerLaplace}. Let us show the second assertion. After a sequence of codimension 2 blow-ups of $X$ if necessary, we may assume that the coordinate function $x_i:U\rightarrow\C$ also extends to a function $x_i:X\rightarrow\PP^1$. At each point of $\widetilde{D^{r.d.}}$, we can find a coordinate system $(r_1,\dots,r_s,\theta_1,\dots,\theta_s;y_{s+1},\dots,y_n)$ of $\widetilde{X}$ on which we have $x^c=\prod_{i=1}^sr_i^{L_i(c)}e^{\ii L_i(c)\theta_i}\prod_{i=1}^n(\varpi^*g_i)^{c_i}$ where $g_i$ is a germ of an invertible holomorphic function on $X$ and $L_i$ is a linear form in $c$ with coefficients in $\Z$. Note that $|r_i^{L_i(c)}|=r_i^{L_i({\rm Re}\; c)}$. In view of this local representation of the integrand, we see that the estimate of the lemma is valid.
\end{proof}
\noindent
The following lemma is a variant of \cite[PROPOSITION 3]{GelfandGraevGG} of which the proof is same as [loc. cit.].
\begin{lemma}\label{lem:IntRep}
For any $[\Gamma]\in\Homo_{n}^{r.d.}(T)$, the integral $\int_\Gamma e^{h(x;z)}x^c\frac{dx}{x}$ belongs to $\sol_{GG(A),z}$.
\end{lemma}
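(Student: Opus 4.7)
The plan is to verify the defining relations of $GG(A)$ directly on the period integral $f(z;c) := \int_\Gamma e^{h(x;z)} x^c \frac{dx}{x}$, along the lines of \cite[PROPOSITION 3]{GelfandGraevGG}. It suffices to treat $[\Gamma]$ in the integral lattice $\Homo_n^{r.d.}$: a general class in $\Homo_n^{r.d.}(T) = \Homo_n^{r.d.}\otimes_{\C[T^\pm]}\C(T)$ differs from such a representative by a factor $P(T)^{-1}$ with $P\in\C[T_1^\pm,\dots,T_n^\pm]$, and the extended integration convention introduced just before the lemma makes this compatible with the $\C(T)$-action on $\sol_{GG(A),z}$. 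Membership of the resulting function in $\OM_z$ is precisely Lemma \ref{lem:growth}, so only the two families of equations remain.

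For the contiguity relations I would differentiate under the integral sign, using $\partial h/\partial z_j = x^{{\bf a}(j)}$, to obtain
\begin{equation*}
\frac{\partial f}{\partial z_j}(z;c) \;=\; \int_\Gamma x^{{\bf a}(j)} e^{h(x;z)}x^c \frac{dx}{x} \;=\; f(z; c + {\bf a}(j)).
\end{equation*}
The exchange of $\partial_{z_j}$ with $\int_\Gamma$ is justified by the same uniform rapid-decay bounds for $e^h x^c$ along $\widetilde{D^{r.d.}}$ that underlie Lemma \ref{lem:growth}.

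For the Euler relations $E_i\cdot f = 0$, the essential pointwise identity is
\begin{equation*}
E_i\cdot\bigl(e^{h(x;z)} x^c\bigr) \;=\; \Bigl(c_i + \sum_{j=1}^N a_{ij} z_j x^{{\bf a}(j)}\Bigr) e^{h(x;z)} x^c \;=\; x_i\frac{\partial}{\partial x_i}\bigl( e^{h(x;z)} x^c\bigr),
\end{equation*}
which follows from $x_i\partial_{x_i} h = \sum_j a_{ij} z_j x^{{\bf a}(j)}$. Multiplication by $dx/x$ then presents the right-hand side as the exact $n$-form
\begin{equation*}
(-1)^{i-1}\, d\!\left( e^{h(x;z)} x^c\, \frac{dx_1\wedge\cdots\wedge\widehat{dx_i}\wedge\cdots\wedge dx_n}{x_1\cdots\widehat{x_i}\cdots x_n}\right),
\end{equation*}
so Stokes' theorem on $\Gamma$ gives the desired vanishing once the boundary contributions are shown to be zero.

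The main obstacle is precisely this last step. Along components of $D_\infty$ the boundary integral is killed by the super-polynomial decay of $e^h$ that defines the rapid-decay locus $\widetilde{D^{r.d.}}$; along components of $D_{irr}$, where only the monomial factor $x^c$ can be singular, the contribution vanishes because $[\Gamma]$ is, by construction, a relative cycle with values in the twisted local system $\mathcal{L}$. Both mechanisms are already built into the definition of $\Homo_n^{r.d.}$ following \cite{LS}, so the verification reduces to a stratum-by-stratum check in the blown-up coordinates $(r_1,\dots,r_s,\theta_1,\dots,\theta_s;y_{s+1},\dots,y_n)$ on $\widetilde{X}$ used in the proof of Lemma \ref{lem:growth}.
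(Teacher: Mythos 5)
Your proof is correct and follows exactly the route the paper intends: the paper offers no independent argument for this lemma, citing only Gelfand--Graev's PROPOSITION 3 with the remark that ``the proof is same as [loc.\ cit.],'' and your write-up is precisely that proof adapted to the rapid-decay setting (differentiation under the integral sign for the contiguity relations, the pointwise identity $E_i\cdot(e^{h}x^c)=x_i\partial_{x_i}(e^{h}x^c)$ plus Stokes for the Euler relations, with Lemma~\ref{lem:growth} handling the growth condition and the extended integration convention handling the $\C(T)$-tensor). One small simplification worth noting: the boundary vanishing need not be argued separately over $D_\infty$ and $D_{irr}$ --- since $\partial\Gamma\subset\widetilde{D^{r.d.}}$ by the very definition of the relative homology, the super-polynomial decay of $e^{h}x^c\omega$ there (which also underlies the well-definedness of Hien's period pairing that the paper already invokes) kills the whole boundary term at once.
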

}

\change{
\noindent
Let $\vol_{\R}$ be the Lebesgue measure on $\R^n$ and set $\vol_{\Z}:=\frac{1}{n!}\vol_{\R}$. The perfectness of the period pairing (\cite{HienRDHomology}) combined with \cite[LEMMA 3.3]{Adolphson} and \cite[Corollary 3.8]{SchulzeWaltherLGM} shows that $\dim_{\C}\Homo^{r.d.}_{n,c}=\dim_{\C}\Homo_{dR,c}^n=\vol_{\Z}(\Delta_A)$ for any generic $c$ (to be more precise, for any non-resonant $c$). Therefore, we have $\dim_{\C(T)}\Homo_n^{r.d.}(T)=\dim_{\C(c)}\Homo_{dR}^n(c)=\vol_{\Z}(\Delta_A)$.
}

\change{
\begin{thm}\label{thm:2.3}
For any $z\in\C^N\setminus {\rm Sing}(A)$, the map 
\begin{equation}\label{eqn:Int}
\Homo_{n}^{r.d.}(T)\ni[\Gamma]\mapsto\int_\Gamma e^{h(x;z)}x^c\frac{dx}{x}\in\sol_{GG(A),z}
\end{equation}
is an isomorphism of $\C(T)$-vector spaces. In particular, one has an identity
\begin{equation}
\dim_{\C(T)}\sol_{GG(A),z}=\vol_{\Z}(\Delta_A).
\end{equation}
\end{thm}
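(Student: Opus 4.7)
My plan is to establish three things in sequence: (i) well-definedness and $\C(T)$-linearity of the period map, (ii) its injectivity via Hien's perfect pairing, and (iii) the coincidence of $\C(T)$-dimensions on both sides. Combined with (i)--(ii), an equality of dimensions will force bijectivity.

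Well-definedness is Lemma \ref{lem:IntRep}. For $\C(T)$-linearity, the deck transformation $T_i$ acts on $[\Gamma] \in \Homo_n^{r.d.}(T)$ and, via the integrand $x^c$, produces the factor $e^{2\pi\ii c_i}$ on the period, matching the $\C[T]_{loc}$-action on $\sol_{GG(A),z}$ through $T_i = e^{2\pi\ii c_i}$. For injectivity, suppose $\int_\Gamma e^{h(x;z)} x^c \frac{dx}{x} = 0$ as an element of $\sol_{GG(A),z}$. Differentiating in $z_j$ inserts $x^{{\bf a}(j)}$ into the integrand, and iterating shows $\int_\Gamma e^{h(x;z)} x^c \omega = 0$ for every $\omega$ in the $D_N$-orbit of $[\frac{dx}{x}]$. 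Since $\Homo_{dR,c_0}^n$ is cyclically generated over $D_N$ by $[\frac{dx}{x}]$ at generic $c_0$ (via \cite[Lemma 4.7]{EsterovTakeuchi}, cited in the excerpt), the specialized cycle $[\Gamma(c_0)]$ pairs trivially with all of $\Homo_{dR,c_0}^n$. Hien's perfect pairing (\cite{HienRDHomology}) then yields $[\Gamma(c_0)] = 0$ in $\Homo_{n,c_0}^{r.d.}$ for generic $c_0$, and the specialization isomorphism $\Homo_n^{r.d.} / \mathfrak{m}_{c_0}\Homo_n^{r.d.} \cong \Homo_{n,c_0}^{r.d.}$ forces $[\Gamma] = 0$ in $\Homo_n^{r.d.}(T)$.

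The source has $\C(T)$-dimension $\vol_{\Z}(\Delta_A)$ by the argument preceding the theorem statement. For the target, since $\sol_{GG(A)}$ is a local system of $\C(T)$-vector spaces on $\C^N \setminus {\rm Sing}(A)$, it suffices to compute its rank at a convenient point $z_0 \in U_{T'}$ for a convergent regular triangulation $T'$ (which exists as noted earlier). At such $z_0$, Proposition \ref{prop:2.2} provides the $\vol_{\Z}(\Delta_A)$ convergent $\Gamma$-series $\Phi_{T'}$ as linearly independent solutions; conversely, any germ in $\sol_{GG(A),z_0}$ admits a unique asymptotic expansion indexed by the toric strata of $T'$, whose leading terms must match those of a $\C(T)$-linear combination of $\Phi_{T'}$, showing that $\Phi_{T'}$ is in fact a basis. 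Injectivity together with this equality of dimensions finishes the proof. The main technical obstacle I foresee is the final spanning step for $\Phi_{T'}$: at the formal level this is the standard theory of Frobenius-type expansions for hypergeometric systems, but one must verify that the $\C(T)$-coefficient structure on $\sol_{GG(A),z_0}$ (rather than the $\C$-structure available for each specialized GKZ system) is compatible with the asymptotic indexing, so that no extra solutions appear in the $\C(T)$-sense.
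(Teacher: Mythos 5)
Your overall strategy (injectivity plus equality of $\C(T)$-dimensions implies bijectivity) is a genuinely different decomposition from the paper's, and the injectivity step via Hien's perfect pairing is fine in spirit, but the upper bound on $\dim_{\C(T)}\sol_{GG(A),z}$ — the step you yourself flag as the ``main technical obstacle'' — is a real gap, not a detail.

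The issue is that the paper deduces that $\Phi_{T'}$ is a $\C(T)$-basis as a \emph{corollary} of Theorem \ref{thm:2.3}, not as an input to it. What is immediately available is only that $\Phi_{T'}$ is a set of $\vol_{\Z}(\Delta_A)$ elements of $\sol_{GG(A),z_0}$ that are linearly independent over the translation-invariant field (Proposition \ref{prop:2.2}). To show these \emph{span} you would need that any $f(z;c)\in\sol_{GG(A),z_0}$ can be written as a $\C(T)$-combination of them — but $\C(T)$-linear independence of candidate extra solutions does not reduce cleanly to $\C$-linear independence at any single generic specialization $c=c_0$, and an asymptotic-expansion (Frobenius-type) argument is set up over $\C$ at fixed $c$, not over $\C(T)$. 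So the reduction ``no extra solutions appear in the $\C(T)$-sense'' is exactly the assertion you must prove, not a consequence of the fixed-$c$ theory. There is a second, smaller circularity: treating $\sol_{GG(A)}$ as a local system of $\C(T)$-vector spaces with a well-defined rank presupposes the finite-dimensionality you are trying to establish.

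The paper's spanning argument is the missing idea: given $\varphi\in\sol_{GG(A),z}$, at a fixed generic $c$ the GKZ space $\sol_{M_A(c),z}$ has $\C$-dimension $\vol_{\Z}(\Delta_A)$ (via Adolphson and Schulze--Walther, already invoked before the theorem), so $\varphi=\Psi_1 S(c)$ for a unique $\C^r$-valued function $S(c)$ of the parameter; the contiguity equations $\partial_j\varphi(z;c)=\varphi(z;c+{\bf a}(j))$ then force $S(c+{\bf a}(j))=S(c)$, so $S$ is $\Z A$-periodic, and the Loeser--Sabbah growth control $\det\Psi^{-1}\in\mathscr{M}(\C^n)$ upgrades this periodicity to $S(c)\in\C(T)$. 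That periodicity-plus-growth step is what replaces the asymptotic-expansion argument you sketched, and it is precisely the ingredient your proof lacks. If you can supply a rigorous $\C(T)$-version of the Frobenius/asymptotic argument — or borrow the periodicity argument — your route would close; as written it does not.
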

\begin{proof}
Let us set $r=\vol_{\Z}(\Delta_A)$ and take a subset $\{ P_i\}_{i=1}^r\subset D_N$ so that $\{[\omega_i]:=P_i\bullet[\frac{dx}{x}]\}$ is a $\C(c)$-basis of $\Homo_{dR}^n(c)$ of which the specialization at a generic $c$ is a $\C$-basis of $\Homo_{dR,c}^n$. We may assume that $P_1=1$.
We take a $\C(T)$-basis $\{[\Gamma_1],\dots,[\Gamma_r]\}$ so that its specialization to a generic point $T_i=e^{2\pi\ii c_i}$ is a $\C$-basis of $\Homo_{n,c}^{r.d.}$.
By the perfectness of the period pairing $\Homo_{n,c}^{r.d.}\otimes\Homo_{dR,c}^n\rightarrow\C$ (\cite{HienRDHomology}), we see that the determinant of the matrix $\Psi:=\left(\int_{\Gamma_j}e^{h(x;z)}x^c\omega_i\right)_{i,j=1}^r$ is not identically zero as a function of $z,c$. Since it is subject to {\it un syst\`eme rationnel holonome d'EDF de dimension 1} in the sense of \cite{LS}, we deduce from Lemma \ref{lem:growth} and \cite[PROPOSITION 4.1.4]{LS} that $\det\Psi^{-1}$ belongs to $\mathscr{M}(\C^n)$ when $z$ is fixed.
We write $\Psi_1$ for the first row vector of $\Psi$.
With these preparations, we prove that the entries of $\Psi_1$ form a basis of $\sol_{GG(A),z}$ over $\C(T)$.
The linear independence is immediate from \cite[Theorem 4.5]{EsterovTakeuchi}.
Let us take any $\varphi\in\sol_{GG(A),z}$. For a generic choice of $c\in\C^n$, $\varphi$ is a solution of $M_A(c)$ and there is a unique column vector $S(c)\in\C^r$ so that $\varphi=\Psi_1S(c)$, from which we deduce an identity $(P_i\varphi)_i=\Psi S(c)$ where we regard $(P_i\varphi)_i$ as a column vector. Since $\varphi(z;c)\in\sol_{GG(A),z}$, we obtain $\Psi_1(z;c+{\bf a}(j))S(c+{\bf a}(j))=\varphi(z;c+{\bf a}(j))=\partial_j\varphi(z;c)=\partial_j\Psi_1(z;c)S(c)=\Psi_1(z;c+{\bf a}(j))S(c)$.
Thus, we see that $S(c+{\bf a}(j))=S(c)$.
Therefore, the identity $S(c)=\Psi^{-1}(P_i\varphi)_i$ combined with the fact that $\det\Psi^{-1}$ belongs to $\mathscr{M}(\C^n)$ when $z$ is fixed proves that $S(c)$ belongs to $\C(T)$. The argument above shows that the entries of $\Psi_1$ span $\sol_{GG(A),z}$ over $\C(T)$. 
\end{proof}
}

\change{
\noindent
The explicit construction for the integration contour in \cite[ \S6]{MatsubaraEulerLaplace} shows that each element of $\Phi_T$ belongs to $\sol_{GG(A),z}$. Therefore, we obtain a
\begin{cor}
Fix a convergent regular triangulation $T$. Then the set $\Phi_T$ of Proposition \ref{prop:2.2} is a $\C(T)$-basis of $\sol_{GG(A),z}$ for any $z\in U_T$.
\end{cor}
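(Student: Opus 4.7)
The plan is to combine Proposition \ref{prop:2.2} (linear independence), Theorem \ref{thm:2.3} (dimension count), and an elementary cardinality computation.

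First, the paragraph immediately preceding the corollary already verifies, via the explicit integration contours of \cite[\S6]{MatsubaraEulerLaplace}, that each $\Gamma$-series in $\Phi_T$ actually lies in $\sol_{GG(A),z}$ rather than merely in the space of formal solutions. Proposition \ref{prop:2.2} then guarantees that the elements of $\Phi_T$ are $\C(T)$-linearly independent.

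Next, I would count: $|\Phi_T|=\sum_{\sigma\in T}r_\sigma=\sum_{\sigma\in T}|\det A_\sigma|$, since $r_\sigma=|\Z^n/\Z A_\sigma|=|\det A_\sigma|$. Theorem \ref{thm:2.3} gives $\dim_{\C(T)}\sol_{GG(A),z}=\vol_{\Z}(\Delta_A)$, so the corollary reduces to the combinatorial identity
\begin{equation*}
\sum_{\sigma\in T}|\det A_\sigma|=\vol_{\Z}(\Delta_A).
\end{equation*}
This is where convergence of $T$ enters. Writing $\ell_\s$ for the unique linear form with $\ell_\s({\bf a}(i))=1$ for $i\in\s$, the convergence condition $|A_\s^{-1}{\bf a}(j)|\leq 1$ for $j\in\bar\s$ reads $\ell_\s({\bf a}(j))\leq 1$, which together with $\ell_\s(0)=0$ forces every ${\bf a}(j)$ ($j\in\bar\s$) and the origin to sit in the same closed half-space with respect to the affine hyperplane spanned by $\{{\bf a}(i)\}_{i\in\s}$. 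Consequently, attaching $0$ as an apex to each maximal simplex of $T$ produces a genuine simplicial subdivision of $\Delta_A$ whose top-dimensional cells have pairwise disjoint interiors, and summing the normalized volumes of these $n$-simplices yields the displayed identity.

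Assembling the pieces, $|\Phi_T|=\vol_{\Z}(\Delta_A)=\dim_{\C(T)}\sol_{GG(A),z}$, and linear independence upgrades $\Phi_T$ to a $\C(T)$-basis. The step I expect to require the most care is the volume identity above: one must check that convergence really delivers a tiling of $\Delta_A$ rather than merely a fan covering $\cone(A)$, but this follows cleanly from the half-space characterization together with the fact that $T$ is already a regular triangulation of $\cone(A)$.
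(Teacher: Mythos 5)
Your argument is correct and coincides with the one the paper sketches: membership of each element of $\Phi_T$ in $\sol_{GG(A),z}$ is established via the integration contours, linear independence is Proposition~\ref{prop:2.2}, the dimension is Theorem~\ref{thm:2.3}, and the only step left unstated in the paper is the cardinality identity $|\Phi_T|=\vol_{\Z}(\Delta_A)$. Your half-space argument --- that convergence of $T$ is precisely the hypothesis under which $\cone(\s)\cap\Delta_A=\mathrm{conv}\bigl(0,\{{\bf a}(i)\}_{i\in\s}\bigr)$ for each maximal $\s\in T$, so that these $n$-simplices tile $\Delta_A$ rather than merely giving a fan over $\cone(A)$, whence $\sum_{\s\in T}|\det A_\s|=\vol_{\Z}(\Delta_A)$ --- is the right way to supply that omitted step, and it is exactly where the ``convergent'' hypothesis does its work.
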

}

\change{
In the discussion above, we fixed a point $z\in\C^N\setminus {\rm Sing}(A)$. If we write $\Homo_{n,z}^{r.d.}(T)$ for $\Homo_n^{r.d.}(T)$, it is straightforward to prove that there is a local system $\mathcal{H}^{r.d.}_{n}(T)=\displaystyle\bigcup_{z\in\C^N\setminus {\rm Sing}(A)}\Homo^{r.d.}_{n,z}(T)\rightarrow\C^N\setminus {\rm Sing}(A)$ of $\C(T)$-vector spaces. Correspondingly, we can also construct a sheaf of solutions of GG system. Indeed, it is easy to see that $V\mapsto\OM(V\times\C^n)$ is a sheaf and so is $\sol_{GG(A)}:=\left(V\mapsto\C(T)\otimes_{\C[T]_{loc}}\Hom_{R_A}(GG(A),\OM(V\times\C^n))\right)$. We can also construct a morphism $\mathcal{H}^{r.d.}_{n}(T)\rightarrow\sol_{GG(A)}$ whose stalk is identical to (\ref{eqn:Int}). See the proofs of \cite[Proposition 3.4. and Theorem 3.5.]{HienRoucairol} which can be adapted to our setting. This observation shows that, for any $z_1,z_2\in\C^N\setminus {\rm Sing}(A)$, we have an isomorphism of solution spaces $\sol_{GG(A),z_1}\simeq\sol_{GG(A),z_2}$ given by an analytic continuation along a path connecting $z_1$ and $z_2$. We summarize the discussion above as a
}

\change{
\begin{prop}
A monodromy matrix or a connection matrix of GG system belongs to $\C(T)^{r\times r}$ with $r=\vol_{\Z}(\Delta_A).$
\end{prop}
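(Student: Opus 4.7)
My plan is to obtain this as an essentially formal consequence of the sheaf-theoretic structure set up just before the proposition. By construction, $\sol_{GG(A)}$ is a sheaf of $\C(T)$-vector spaces on $\C^N\setminus{\rm Sing}(A)$; Theorem \ref{thm:2.3} together with the subsequent discussion shows that it is in fact a local system of rank $r=\vol_{\Z}(\Delta_A)$. A connection matrix (between two local bases on different connected open sets) and a monodromy matrix (between a basis and itself along a loop) are then both instances of the matrix of a parallel transport map $\sol_{GG(A),z_1}\to\sol_{GG(A),z_2}$ between $\C(T)$-stalks, expressed in prescribed $\C(T)$-bases. So the proposition will reduce to two assertions: that such parallel transport is $\C(T)$-linear, and that each stalk admits a $\C(T)$-basis.

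The existence of $\C(T)$-bases of the stalks is already furnished by the corollary above, which shows that for every convergent regular triangulation $T$ the set $\Phi_T$ is a $\C(T)$-basis of $\sol_{GG(A),z}$ for any $z\in U_T$. For the $\C(T)$-linearity of parallel transport I would argue as follows: under the identification $T_i=e^{2\pi\ii c_i}$, the ring $\C[T]_{loc}$ corresponds to translation-invariant meromorphic functions in $c$ alone, and its action on $\OM(V\times\C^n)$ is pointwise multiplication in the $c$-variable. This action manifestly commutes with analytic continuation in $z$, since the latter does not affect $c$. Passing from $\C[T]_{loc}$ to its localization $\C(T)$ is then automatic.

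To conclude, I would fix two points $z_1,z_2\in\C^N\setminus{\rm Sing}(A)$ with $\C(T)$-bases $\Phi_{(1)}$ of $\sol_{GG(A),z_1}$ and $\Phi_{(2)}$ of $\sol_{GG(A),z_2}$, together with a path $\gamma$ from $z_1$ to $z_2$ in $\C^N\setminus{\rm Sing}(A)$. The continuation $\Phi_{(1)}^{\gamma}$ is again a $\C(T)$-basis of $\sol_{GG(A),z_2}$, so there is a unique matrix $M\in\C(T)^{r\times r}$ with $\Phi_{(1)}^{\gamma}=\Phi_{(2)}M$; this is the desired connection matrix. The monodromy matrix is the special case $z_1=z_2$, $\Phi_{(1)}=\Phi_{(2)}$, with $\gamma$ a loop based at $z_1$.

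I do not anticipate any real obstacle: once one has accepted that $\sol_{GG(A)}$ is a local system of $\C(T)$-vector spaces, the statement is almost a tautology. The only point that truly requires attention is the clean separation of the $z$- and $c$-variables that underlies the $\C(T)$-linearity of parallel transport, and this separation is built into the very formulation of GG system (the variable $c$ being treated as independent of $z$).
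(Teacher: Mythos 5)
Your argument is correct and follows essentially the same route as the paper: the paper also derives the proposition as a summary of the immediately preceding discussion, where $\sol_{GG(A)}$ is observed to be a sheaf (in fact a local system) of $\C(T)$-vector spaces of rank $r=\vol_{\Z}(\Delta_A)$ on $\C^N\setminus{\rm Sing}(A)$, so that analytic continuation induces $\C(T)$-linear isomorphisms of stalks. Your explicit remark that the $\C[T]_{loc}$-action commutes with continuation in $z$ because it acts only on the $c$-variable is a useful elaboration of a point the paper leaves implicit (delegating the local-system structure to the cited construction of Hien--Roucairol), but it is the same idea.
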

}

\change{
\begin{rem}
Theorem \ref{thm:2.3} can be modified to give a formula of the dimension of the solution space of GG system when the column vectors of $A$ do not generate the lattice $\Z^n$. Suppose that the $\Q$-span of the column vectors of $A$ generates $\Q^n$. Let $Q\in\Z^{n\times n}$ be a matrix such that $\Z A=\Z Q$. We put $A^\prime=Q^{-1}A\in\Z^{n\times N}$ and $c^\prime=Q^{-1}c={}^t(c_1^\prime,\dots,c^\prime_n)$. By definition, we have $\Z A^{\prime}=\Z^{n}$. It can readily be seen that $M_A(c)=M_{A^\prime}(c^\prime)$ and $GG(A)=GG(A^\prime)$. Replacing $A$ and $c$ by $A^\prime$ and $c^\prime$, the argument of Theorem \ref{thm:2.3} gives a formula 
\begin{equation}\label{eqn:OT}
\dim_{\C(T)}\sol_{GG(A),z}=\frac{\vol_{\Z}(\Delta_A)}{[\Z^{n}:\Z A]}=:{\rm rank}GG(A),
\end{equation}
where the action of $T_i$ on $\sol_{GG(A),z}$ is given by $T_i=e^{2\pi\ii c_i^\prime}$. The formula (\ref{eqn:OT}) is an analytic counterpart of the result of \cite{OharaTakayama}.
\end{rem}
\begin{rem}
One can develop an isomorphism between a homology group associated to other types of integral representations and the solution space of GG system along the line of the discussion in this subsection. See \cite{MatsubaraEulerLaplace}.
\end{rem}
}

\subsection{Unique solvability of the boundary value problem}\label{subsec:2.2}

A simple, but important observation is that GG system has a structure of boundary value problem. Let ${\bf a}(N+1)\in\Z^{n}$ be a lattice vector and put $\tilde{A}=(A|{\bf a}(N+1))$. We consider a formal solution $f(z,z_{N+1};c)$ of $\GG(\tilde{A})$. Then, we can easily see that its boundary value ${\rm bv}_{N+1}(f)(z;c):=f(z,0;c)$ along $\{ z_{N+1}=0\}$ (if it exists) gives rise to a solution of $\GG(A)$. Thus, the boundary value map ${\rm bv}_{N+1}$ takes the space of formal solutions of  $GG(\tilde{A})$ to that of $GG(A)$. It is now natural to ask if this boundary value map gives rise to a bijection.

Let us make this observation more precise. In this section, we do not assume that the column vectors of $A$ generate the lattice $\Z^n$ but we assume that they generate $\Q^n$ over $\Q$. We consider a left $R_A$-module $M$. We set $M[[z_{N+1}]]:=M\otimes_{\C}\C[[z_{N+1}]]$. The natural action of $\pa_{N+1}$ onto $\C[[z_{N+1}]]$ induces an action of $R_{\tilde{A}}$ onto $M[[z_{N+1}]]$. Any $R_{\tilde{A}}$-morphism $F:GG(\tilde{A})\rightarrow M[[z_{N+1}]]$ is determined by the value of $[1]\in GG(\tilde{A})$. By abuse of notation, we write $F(z,z_{N+1};c)=\sum_{m=0}^\infty \frac{f_m(z;c)}{m!}z_{N+1}^m$ for the morphism $F$ where $f_m(z;c)\in M$. We can naturally associate the boundary value ${\rm bv}_{N+1}(F(z,z_{N+1};c))$ to the hyperplane $\{ z_{N+1}=0\}$ by setting ${\rm bv}_{N+1}(F(z,z_{N+1};c)):=f_0(z;c)\in M$. The symbol $F(z,0;c)$ also denotes the boundary value ${\rm bv}_{N+1}(F(z,z_{N+1};c))=f_0(z;c)$. It can readily be seen that there is a unique $R_A$-morphism from $GG(A)$ to $M$ which sends $[1]$ to $f_0(z;c)\in M$. This $R_A$-morphism is also denoted by the symbols $F(z,0;c)$, ${\rm bv}_{N+1}(F(z,z_{N+1};c))$ or $f_0(z;c)$. Lastly, for any $R_A$-morphism $f:GG(A)\rightarrow M$ and a vector ${\bf v}\in\Z^n$, we write $f(z;c+{\bf v})$ for the image of $[\change{\tau}^{\bf v}]$.

\begin{thm}\label{prop:BVP}
The map ${\rm bv}_{N+1}$ induces a linear isomorphism
\begin{equation}
\begin{array}{cccc}
{\rm bv}_{N+1}:&\Hom_{R_{\tilde{A}}}(GG(\tilde{A}),M[[z_{N+1}]])&\tilde{\longrightarrow}&\Hom_{R_{A}}(GG(A),M)\\
&\rotatebox{90}{$\in$}& &\rotatebox{90}{$\in$}\\
&F(z,z_{N+1};c) &\mapsto &F(z,0;c),
\end{array}
\end{equation}
whose inverse is given by the formula
\begin{equation}\label{GreenKernel}
\mathfrak{D}_{N+1}:f(z;c)\mapsto \sum_{m=0}^\infty f(z;c+m{\bf a}(N+1))\frac{z_{N+1}^m}{m!}
\end{equation}
for any $f(z;c)\in\Hom_{R_{A}}(GG(A),M)$.
\end{thm}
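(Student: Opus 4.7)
The plan is to exploit the contiguity relation in the $(N+1)$-st direction to determine $F(z,z_{N+1};c)$ from its value at $z_{N+1}=0$, then check that the formal series (\ref{GreenKernel}) does indeed define an $R_{\tilde{A}}$-morphism for every $f\in\Hom_{R_A}(GG(A),M)$. Since the statement is purely algebraic (valid for arbitrary $R_A$-modules $M$), no convergence needs to be addressed: everything happens in $M[[z_{N+1}]]$.

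For injectivity of ${\rm bv}_{N+1}$ and the shape of its inverse, write $F(z,z_{N+1};c)=\sum_{m\geq 0}\frac{f_m(z;c)}{m!}z_{N+1}^m$. The contiguity relation $\pa_{N+1}\cdot F=F(z,z_{N+1};c+{\bf a}(N+1))$ reads coefficient-wise as $f_{m+1}(z;c)=f_m(z;c+{\bf a}(N+1))$, which solves recursively to $f_m(z;c)=f_0(z;c+m{\bf a}(N+1))$. In particular, $F$ is determined by $f_0=F(z,0;c)$ and, should a preimage of $f_0$ exist at all, it is forced to be the expression in (\ref{GreenKernel}).

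For surjectivity, starting from an arbitrary $f\in\Hom_{R_A}(GG(A),M)$, set $F:=\mathfrak{D}_{N+1}(f)$ and check the defining relations of $GG(\tilde{A})$. The contiguity relations are immediate: for $j\leq N$ one applies $\pa_j$ termwise and uses $\pa_j f(z;c')=f(z;c'+{\bf a}(j))$ at $c'=c+m{\bf a}(N+1)$; for $j=N+1$ a reindex $m\mapsto m-1$ gives $\pa_{N+1}F=F(z,z_{N+1};c+{\bf a}(N+1))$. The only non-formal step is checking the augmented Euler equations $\tilde{E}_i\cdot F=0$, where one uses that $z_{N+1}\pa_{N+1}$ multiplies the $m$-th coefficient by $m$, while the Euler relation $E_i f(z;c')=0$ at $c'=c+m{\bf a}(N+1)$ yields
\begin{equation}
\sum_{j=1}^N a_{ij}z_j\pa_j f(z;c+m{\bf a}(N+1))=-(c_i+m a_{i,N+1})f(z;c+m{\bf a}(N+1)).
\end{equation}
Summing the three contributions to $\tilde{E}_i F$ (the sum over $j\leq N$, the term from $a_{i,N+1}z_{N+1}\pa_{N+1}$, and the multiplication by $c_i$), the coefficient of $z_{N+1}^m/m!$ collapses to $[-(c_i+m a_{i,N+1})+m a_{i,N+1}+c_i]f(z;c+m{\bf a}(N+1))=0$.

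Finally, ${\rm bv}_{N+1}\circ\mathfrak{D}_{N+1}={\rm id}$ is visible by setting $z_{N+1}=0$ (only the $m=0$ term survives), and $\mathfrak{D}_{N+1}\circ{\rm bv}_{N+1}={\rm id}$ is the recursion established in the first paragraph. The main thing to get right is the bookkeeping in the Euler equation check, but this is a one-line cancellation; there is no genuine analytic or topological obstacle because the argument stays at the level of formal power series.
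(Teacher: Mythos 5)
Your proof is correct and follows essentially the same route as the paper: verify the Euler and contiguity relations for $\mathfrak{D}_{N+1}(f)$ by termwise computation (with the same one-line cancellation in the augmented Euler equation), and show the two compositions are identities by reading off $f_m(z;c)=f_0(z;c+m{\bf a}(N+1))$ from the $\pa_{N+1}$-contiguity. The paper phrases the last step via the Taylor expansion $F=\sum_m \frac{1}{m!}\{(\pa_{N+1})^m F\}(z,0;c)\,z_{N+1}^m$ rather than a coefficient-wise recursion, but this is the same argument.
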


\begin{proof}
For an element $f(z;c)\in\Hom_{R_{A}}(GG(A),M)$, we set $F(z,z_{N+1};c):=\sum_{m=0}^\infty f(z;c+m{\bf a}(N+1))\frac{z_{N+1}^m}{m!}$. First, we show that $F(z,z_{N+1};c)$ is a solution of $GG(\tilde{A})$, i.e., $F$ is a $R_{\tilde{A}}$-morphism. In view of the fact that $\mathfrak{D}_{N+1}\circ\pa_j=\pa_j\circ\mathfrak{D}_{N+1}$ for any $j=1,\dots,N$, we have $\partial_jF(z,z_{N+1};c)=F(z,z_{N+1};c+{\bf a}(j)).$ On the other hand, we have 
\begin{align}
\pa_{N+1}F(z,z_{N+1};c)&=\sum_{n=1}^\infty \frac{f(z;c+n{\bf a}(N+1))}{(n-1)!}z_{N+1}^{n-1}\\
&=\sum_{n=0}^\infty \frac{f(z;c+{\bf a}(N+1)+n{\bf a}(N+1))}{n!}z_{N+1}^n\\
&=F(z,z_{N+1};c+{\bf a}(N+1)).
\end{align}

\noindent
As for Euler equations, for any $i=1,\dots,n$, we have
\begin{align}
 &\left[ \sum_{j=1}^Na_{ij}\theta_j+a_{iN+1}\theta_{N+1}+c_i\right]\left( \sum_{n=0}^\infty \frac{f(z;c+n{\bf a}(N+1))}{n!}z_{N+1}^n\right)\\
=&\sum_{n=0}^\infty \frac{z_{N+1}^n }{n!}\{ -(c_i+na_{iN+1})+na_{iN+1}+c_i\}f(z;c+n{\bf a}(N+1))\\
=&0.
\end{align}
This shows that $F(z,z_{N+1};c)$ is a solution of $GG(\tilde{A})$ and that $\mathfrak{D}_{N+1}:\Hom_{R_{A}}(GG(A),M)\rightarrow\Hom_{R_{\tilde{A}}}(GG(\tilde{A}),M[[z_{N+1}]])$ is well-defined. It is easy to see that ${\rm bv}_{N+1}\circ\mathfrak{D}_{N+1}=id.$ 

Next, we suppose $F(z,z_{N+1};c)\in\Hom_{R_{\tilde{A}}}(GG(\tilde{A}),M[[z_{N+1}]])$. We expand it as $F(z,z_{N+1};c)=\sum_{m=0}^\infty\frac{1}{m!}\left\{\left(\pa_{N+1}\right)^mF\right\}(z,0;c)z_{N+1}^m.$ Since $\left(\pa_{N+1}\right)^mF(z,z_{N+1};c)=F(z,z_{N+1};c+m{\bf a}(N+1))$, if we put $f(z;c)={\rm bv}_{N+1}(F(z,z_{N+1};c))$, we obtain the identity $F(z,z_{N+1};c)=\mathfrak{D}_{N+1} f(z;c)$. This argument shows the identity $\mathfrak{D}_{N+1}\circ {\rm bv}_{N+1}=id.$
\end{proof}

\begin{rem}
Let $\s\subset\{ 1,\dots,N\}$ be a simplex and let $\s=\s^u\sqcup\s^d$ be  a partition of $\s$. It is easily verified that we have an equality ${\rm bv}_{N+1}(\psi^{\s^u}_{\s^d}(z,z_{N+1};c))=\psi^{\s^u}_{\s^d}(z;c)$ and therefore, $\mathfrak{D}_{N+1}\psi^{\s^u}_{\s^d}(z;c)=\psi^{\s^u}_{\s^d}(z,z_{N+1};c)$.
\end{rem}

\begin{rem}
If $M$ is a left $R_A\otimes_{\C}\C[T]_{loc}$-module, ${\rm bv}_{N+1}$ is an isomorphism of $\C[T]_{loc}$-modules.
\end{rem}

\subsection{Monodromy invariant subspaces}\label{subsec:2.3}

In this subsection, we assume that the column vectors of $A$ generate the lattice $\Z^n$. \change{We prove a decomposition of $\sol_{GG(A),z}$ into monodromy invariant subspaces. Note that the decomposition in Theorem \ref{thm:2.7} is a trivial decomposition when $A$ is homogeneous.} We take a facet $F$ of $\Delta_A$ which does not contain the origin. \change{We often identify $F$ with the convex hull of column vectors ${\bf a}(j)$ lying on $F$ and the origin.} We fix $n\times n$ integer matrix $Q$ such that $\Z A_F=\Z Q$. For any local solution $f(z_F;c)\in\sol_{GG(A_F),z}$ and $\tilde{\bf k}\in\Z^{n}/\Z{}^tQ$, we have $e^{2\pi\ii{}^t\tilde{\bf k}Q^{-1}c}f(z_F;c)\in\sol_{GG(A_F),z}$. Therefore, the function
$\prod_{j\notin F}\mathfrak{D}_j \left( e^{2\pi\ii{}^t\tilde{\bf k}Q^{-1}c}f(z_F;c)\right)$ is a formal solution of $GG(A)$. Let us take a fundamental basis of solutions $\Phi_{A_F}(z_F;c)$ of $GG(A_F)$ and put $m=\rank GG(A_F)$ and a complete system of representatives $\{{\bf k}(i)\}_{i=1}^r$ of $\Z^n/\Z {}^tQ$. Regarding $\Phi_{A_F}$ as a row vector, we put 
\begin{equation}
\Psi_{A_F}(z_F;c)=\left( e^{2\pi\ii{}^t\tilde{\bf k}(1)Q^{-1}c}\Phi_{A_F}(z_F;c),\dots,e^{2\pi\ii{}^t\tilde{\bf k}(r)Q^{-1}c}\Phi_{A_F}(z_F;c)\right).
\end{equation}
We consider a path $\gamma$ of analytic continuation along which one has a relation $\gamma_*\Phi_{A_F}(z_F;c)=\Phi_{A_F}(z_F;c)M(c)$. Then, one has a relation $\gamma_*\Psi_{A_F}(z_F;c)=\Psi_{A_F}(z_F;c) \begin{pmatrix} M(c)& & \\ &\ddots& \\ & &M(c)\end{pmatrix}.$ Now we put $\Psi_F(z;c):=\prod_{j\notin F}\mathfrak{D}_j\Psi_{A_F}(z_F;c)$. We define the tensor product $A\otimes B$ of $m_1\times m_1$ matrix $A=(a_{ij})$ and $m_2\times m_2$ matrix $B$ by the formula 
\begin{equation}
A\otimes B=
\begin{pmatrix}
a_{11}B&\cdots&a_{1m_1}B\\
\vdots&\ddots&\vdots\\
a_{m_11}B&\cdots&a_{m_1m_1}B
\end{pmatrix}.
\end{equation}
Note that $A\otimes B$ is invertible if and only if both $A$ and $B$ are invertible and the inverse in this case is given by $A^{-1}\otimes B^{-1}$
Then,
\begin{align}
\Psi_F(z;c)&=(\Psi_1,\dots,\Psi_r)\\
&=\left( F_1(z;c),\dots,F_r(z;c)\right) {}^tC\otimes I_m.
\end{align}
Here, we have put 
\begin{equation}
F_l(z;c)=\sum_{[A_F{\bf m}]=[A_{\bar{F}}{\bf k}(l)]}\Phi_{A_F}(z_F,c+A_{\bar{F}}{\bf m})\frac{z_{\bar{F}}^{\bf m}}{{\bf m}!}
\end{equation}
and
\begin{equation}
C=\diag\left( e^{2\pi\ii{}^t\tilde{\bf k}(i)Q^{-1}c}\right)_{i=1}^r\left( e^{2\pi\ii{}^t\tilde{\bf k}(i)Q^{-1}A_{\bar{F}}{\bf k}(j)}\right)_{i,j=1}^r,
\end{equation}
where the symbol $[\  ]$ denotes the equivalence class in the group $\Z^n/\Z A_F=\Z^n/\Z Q$. Note that we have 
\begin{equation}
{}^tC^{-1}
=\frac{1}{r}\diag\left( e^{-2\pi\ii{}^t\tilde{\bf k}(i)Q^{-1}c}\right)_{i=1}^r\left( e^{-2\pi\ii{}^t\tilde{\bf k}(i)Q^{-1}A_{\bar{F}}{\bf k}(j)}\right)_{i,j=1}^r.
\end{equation}

\noindent
Putting $M_l=M(c+A_{\bar{F}}{\bf k}(l))$ and ignoring the problem of convergence for the moment, we have
\begin{align}
\gamma_*\Psi_F&=\prod_{j\notin F}\mathfrak{D}_j\gamma_*\Psi_{A_F}\\
&=\left(
\sum_{l=1}^re^{2\pi\ii{}^t\tilde{\bf k}(1)Q^{-1}(c+A_{\bar{F}}{\bf k}(l))}F_lM_l,\dots,
\sum_{l=1}^re^{2\pi\ii{}^t\tilde{\bf k}(r)Q^{-1}(c+A_{\bar{F}}{\bf k}(l))}F_lM_l
\right)\\
&=\frac{1}{r}\Psi_F({}^tC^{-1}\otimes I_m)
\begin{pmatrix}
C_{11}M_1&\cdots&C_{r1}M_1\\
\vdots&\ddots&\vdots\\
C_{1r}M_r&\cdots&C_{rr}M_r
\end{pmatrix}\\
&=\frac{1}{r}\Psi_F
\begin{pmatrix}
\sum_{l=1}^rC_{1l}^{-1}C_{1l}M_l&\cdots&\sum_{l=1}^rC_{1l}^{-1}C_{rl}M_l\\
\vdots&\ddots&\vdots\\
\sum_{l=1}^rC_{rl}^{-1}C_{1l}M_l&\cdots&\sum_{l=1}^rC_{rl}^{-1}C_{rl}M_l
\end{pmatrix}.
\label{eqn:MIS}
\end{align}
Note that $C^{-1}_{ij}$ stands for the fraction $\frac{1}{C_{ij}}$. If we showed that each entry of $\Psi_F$ is convergent for any $z_{\bar{F}}\in\C^{\bar{F}}$ as a power series, (\ref{eqn:MIS}) shows that the entries of $\Psi_F$ span a monodromy invariant subspace of $\sol_{GG(A),z}$.

\change{
Let us introduce notation necessary to formulate the theorem. As in \S\ref{subsec:2.1}, we write $\C(T)$ for the fraction field of the ring $\C[T_1^\pm,\dots,T^\pm_n]=\C[\Z^n]$. We consider an extension $\Z^n\subset \Z Q^{-1}$ of free abelian groups and we write $\C(T^\prime)$ for the fraction field of the ring $\C[\Z Q^{-1}]=\C[T_1^{\prime\pm},\dots,T_n^{\prime\pm}]$. The extension degree $[\C(T^\prime):\C(T)]$ is equal to the index $[\Z^n:\Z A_F]$. The action of $\C(T^\prime)$ on $\sol_{GG(A_F),z_F}$ is defined by $T^\prime_i=e^{2\pi\ii c^\prime_i}$ and (\ref{eqn:OT}) shows that its dimension over $\C(T^\prime)$ is $\frac{\vol_{\Z}(F)}{[\Z^n:\Z A_F]}$. Therefore, if we regard $\sol_{GG(A_F),z_F}$ as a $\C(T)$-vector space, its dimension is given by $\frac{\vol_{\Z}(F)}{[\Z^n:\Z A_F]}\cdot [\C(T^\prime):\C(T)]=\vol_{\Z}(F)$. On the other hand, the action of $\C(T)$ on $\sol_{GG(A),z}$ is given by $T_i=e^{2\pi\ii c_i}$.
}

\begin{thm}\label{thm:2.7}
Let $z\in\C^N\setminus{\rm Sing}(A)$ be a point. We have a decomposition of the solution space of the GG system into monodromy invariant subspaces
\begin{equation}
\sol_{GG(A),z}=\bigoplus_{\substack{0\notin F<\Delta_A\\ F:\text{ facet}}}S_F.
\end{equation}
Here, $S_F$ is a subspace of $\sol_{GG(A),z}$ canonically isomorphic to $\sol_{GG(A_F),z_F}$ as $\C(T)$-vector spaces through the boundary value map $\prod_{j\notin F}{\rm bv}_{j}$.
\end{thm}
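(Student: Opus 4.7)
The plan is to realize $S_F$ as the image of $\iota_F := \prod_{j \in \bar F} \mathfrak{D}_j$ on $\sol_{GG(A_F), z_F}$, reduce the convergence of $\iota_F$ to a direct manipulation of the $\Gamma$-series in (\ref{eqn:2.1}), and conclude the decomposition by a dimension count against Theorem \ref{thm:2.3}. Iterating Theorem \ref{prop:BVP} in the variables $z_j$ for $j \in \bar F$ makes $\iota_F$ a well-defined formal map whose formal inverse is $\prod_{j \in \bar F} {\rm bv}_j$. The key observation to upgrade $\iota_F$ to act on genuine local solutions is the identity
\begin{equation*}
\iota_F\bigl(\psi^{\s^u}_{\s^d,\tilde{\mathbf k}}(z_F;c)\bigr) = \psi^{\s^u}_{\s^d,\tilde{\mathbf k}}(z;c),
\end{equation*}
valid for any simplex $\s \subset F$, with the right-hand side denoting the $\Gamma$-series of $GG(A)$ at the same $\s$. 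This is proved by splitting $\bar\s = (F \setminus \s) \sqcup \bar F$ and the corresponding factorial $\mathbf m! = \mathbf m_F!\,\mathbf m_{\bar F}!$ in formula (\ref{eqn:2.1}); after regrouping, the double sum defining the right-hand side is exactly $\iota_F$ applied to the left-hand side.

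Choosing a convergent regular triangulation $T_F$ for $A_F$ and the associated $\Gamma$-series basis of $\sol_{GG(A_F), z_F}$ from Proposition \ref{prop:2.2}, the identity above shows $\iota_F$ sends this basis into convergent solutions of $GG(A)$; I define $S_F$ as their $\C(T)$-span. Once convergence is established, the pre-theorem computation ending in (\ref{eqn:MIS}) becomes a rigorous rearrangement of convergent series, giving $\gamma_* \Psi_F = \Psi_F \cdot N$ with $N \in \C(T)^{rm \times rm}$ for every loop $\gamma$ in $\C^N \setminus {\rm Sing}(A)$, so $S_F$ is monodromy invariant.

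For the direct sum decomposition, the remark after Theorem \ref{thm:2.3} together with the lattice extension $\Z^n \subset \Z Q^{-1}$ gives $\dim_{\C(T)} S_F = \vol_{\Z}(\Delta_{A_F})$, since the factor $[\Z^n : \Z A_F]$ cancels when passing from the natural $\C(T')$-action to the $\C(T)$-action. Because the pyramids $\{\Delta_{A_F}\}_{0 \notin F}$ tile $\Delta_A$, Theorem \ref{thm:2.3} yields
\begin{equation*}
\sum_F \dim_{\C(T)} S_F = \sum_F \vol_{\Z}(\Delta_{A_F}) = \vol_{\Z}(\Delta_A) = \dim_{\C(T)} \sol_{GG(A),z}.
\end{equation*}
To upgrade this numerical equality to a true direct sum, I would assemble the facet-wise triangulations $T_F$ into a single convergent regular triangulation $T$ of $\Delta_A$ each of whose $n$-simplices lies entirely on one facet $F \not\ni 0$; then $\Phi_T = \bigsqcup_F \iota_F(\Phi_{T_F})$, and the linear independence of $\Phi_T$ in Proposition \ref{prop:2.2} forces the sum to be direct.

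The main obstacle is producing such a compatible convergent regular triangulation $T$. This is essentially a combinatorial input: one can construct $T$ from a height function that lifts each facet of $\Delta_A$ independently and generically, yielding a regular triangulation whose $n$-simplices are precisely the $n$-subsets of columns lying on a common facet. The convergence condition $|A_\s^{-1}\mathbf{a}(j)| \le 1$ is then automatic when $\s$ lies on a facet $F \not\ni 0$, because $|A_\s^{-1}\mathbf{a}(j)| = \phi_F(\mathbf{a}(j))$ where $\phi_F$ is the affine functional with $\phi_F \equiv 1$ on $F$ and $\phi_F(0) = 0$, and $\phi_F(\mathbf{a}(j)) \in [0,1]$ for every $j$ since $\mathbf{a}(j) \in \Delta_A$. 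Apart from this combinatorial input, the argument is a routine synthesis of Theorems \ref{prop:BVP} and \ref{thm:2.3} with Proposition \ref{prop:2.2}.
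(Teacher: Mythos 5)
Your $\Gamma$-series identity $\iota_F\bigl(\psi^{\s^u}_{\s^d,\tilde{\mathbf k}}(z_F;c)\bigr)=\psi^{\s^u}_{\s^d,\tilde{\mathbf k}}(z;c)$ is correct, and matching $\Phi_T=\bigsqcup_F\iota_F(\Phi_{T_F})$ against the pyramid decomposition $\vol_{\Z}(\Delta_A)=\sum_F\vol_{\Z}(\Delta_{A_F})$ is a sound way to get the spanning and the directness. That part agrees with the spirit of the paper's final paragraph. However, the convergence argument you rely on is genuinely weaker than what the monodromy step needs, and this is where the proposal has a gap.

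The issue is the phrase ``once convergence is established, the pre-theorem computation ending in (\ref{eqn:MIS}) becomes a rigorous rearrangement of convergent series \dots for every loop $\gamma$.'' What you established is that $\iota_F\psi^{\s^u}_{\s^d,\tilde{\mathbf k}}$ converges at the base point $z\in U_T$. To justify $\gamma_*\Psi_F=\prod_{j\notin F}\mathfrak{D}_j\,\gamma_*\Psi_{A_F}$ for an \emph{arbitrary} loop $\gamma$ in $\C^N\setminus{\rm Sing}(A)$, you must know that the series $\sum_{\mathbf m}\Psi_{A_F}(z_F;c+A_{\bar F}\mathbf m)\,z_{\bar F}^{\mathbf m}/\mathbf m!$ converges \emph{uniformly on compact subsets of $(\C^F\setminus{\rm Sing}(A_F))\times\C^{\bar F}$}, so that $\Psi_F$ is entire in $z_{\bar F}$ and its analytic continuation is controlled by the $z_F$-monodromy of $\Psi_{A_F}$ via the homotopy equivalence $(\C^F\setminus{\rm Sing}(A_F))\times\{z_{\bar F}\}\hookrightarrow(\C^F\setminus{\rm Sing}(A_F))\times\C^{\bar F}$. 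Convergence on $U_T$ alone does not give this, since $\gamma$ will typically leave $U_T$ and then $\gamma_*\Psi_{A_F}$ is no longer the original $\Gamma$-series. This is precisely what the paper's estimate (\ref{eqn:2.41}) supplies: using the Euler integral representation and a beta-function factorization, the paper bounds $\prod_{j\notin F}\mathfrak{D}_j f$ for an \emph{arbitrary} solution $f$ of $GG(A_F)$, not just a $\Gamma$-series, with constants uniform on compacts in $z_F$; that is what makes the interchange with $\gamma_*$ legitimate. Your route could in principle be rescued by observing that for $\s\subset F$ one has $H_\s=F$, so the convergence set $U_\s$ imposes no constraint on $z_{\bar F}$, hence the $\Gamma$-series is already entire in $z_{\bar F}$ with a locally uniform estimate; but you did not make this observation, and you would still need to argue that the series representation survives continuation of the $z_F$-coefficients beyond $U_{T_F}$. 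As written, the monodromy invariance of $S_F$ is not established.

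Two smaller remarks. First, the assertion ``$\phi_F(\mathbf a(j))\in[0,1]$ for every $j$ since $\mathbf a(j)\in\Delta_A$'' is not correct in general: $\phi_F(\mathbf a(j))\le1$ holds because $F$ is the face maximizing $\phi_F$, but $\phi_F(\mathbf a(j))\ge0$ needs the origin to be a vertex of $\Delta_A$; fortunately the convergence criterion only requires the upper bound $|A_\s^{-1}\mathbf a(j)|\le1$, so your conclusion survives. Second, the explicit ``lift each facet independently'' construction of $T$ is unnecessary: for any convergent regular triangulation $T$ of $A$, each $n$-simplex $\s\in T$ already lies on a unique facet $F\not\ni0$ (the support hyperplane of the linear functional $\phi_\s$ with $\phi_\s\equiv1$ on $\s$), and the induced $T_F$ is automatically a convergent regular triangulation of $A_F$, which is essentially what the paper uses.
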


\begin{proof}
We only need to ensure that $\Psi_F$ is well-defined, convergent in $z_{\bar{F}}\in\C^{\bar{F}}$ and $\sol_{GG(A),z}$ is spanned by these functions. We first take a row vector $l_1\in\Z^{1\times n}$ such that $l_1(Q^{-1}A_F)=(1,\dots,1)$. We prolong $l_1$ to a basis $\{ l_j\}_{j=1}^n$ of $\Z^{1\times n}$ and put 
$
L=
\begin{pmatrix}
l_1\\
\hline
\vdots\\
\hline
l_n
\end{pmatrix}
$. Then, we put 
$
A^\prime_F=LQ^{-1}A_F=
\begin{pmatrix}
1& \cdots&1\\
 &{\Huge \tilde{A}_F} &
\end{pmatrix}
$, 
$
A^\prime
=
LQ^{-1}A
$, and 
$
c^\prime=LQ^{-1}c=
\begin{pmatrix}
\gamma\\
\tilde{c}
\end{pmatrix}
$. We show that for any solution $f(z_F,c)$ of $GG(A_F)$, the function $\prod_{j\in\bar{F}}\mathfrak{D}_jf(z_F;c)$ is convergent in $z_{\bar{F}}\in\C^{\bar{F}}$. We write $\tilde{\bf a}(j)$ for the $j$($\in F$)-th column vector of the matrix $\tilde{A}_F$. \change{By construction, $A^\prime_F$ is homogeneous.} By the general theory of Euler integral representation (\cite[Theorem 2.14]{GKZEuler}), any solution of the GKZ system $M_{A^\prime_F(c^\prime)}$ with generic (i.e., non-resonant) $c^\prime$ has the form
\begin{equation}\label{eqn:EI}
f(z_F;c)=e^{\pi\ii\gamma}\Gamma(\gamma)\int_C h_{F}(x;z)^{-\gamma}x^{\tilde{c}}\frac{dx}{x},
\end{equation}
where $h_F(x;z)=\sum_{j\in F}z_jx^{\tilde{\bf a}(j)}$, $x=(x_2,\dots,x_n)$ is a variable on $(n-1)$-dimensional complex torus, $\frac{dx}{x}=\frac{dx_2}{x_2}\wedge\cdots\wedge\frac{dx_n}{x_n}$ and $C$ is a cycle in a twisted homology group.\footnote{To be more precise, $C$ should be taken from a homology group of $U:=\{x\in(\C^*)^{n-1}\mid h_F(x;z)\neq 0\}$ with coefficients in a $\C[T_1,\dots,T_n]$-local system of rank 1. Here, $T_1$ corresponds to a loop around $\{ x\mid h_F(x;z)=0\}$ in $U$ and other $T_i$ correspond to a loop around $\{ x_i=0\}$.} Note that (\ref{eqn:EI}) is also a solution of $GG(A_F)$. Based on this formula, we have a relation
\begin{align}
&\prod_{j\notin F}\mathfrak{D}_jf(z_F;c)\nonumber\\
=&
\sum_{{\bf m}\in\Z_{\geq 0}^{\bar{F}}}
e^{\pi\ii\left(\gamma+\sum_{j\in\bar{F}}a^\prime_{1j}m_j\right)}\Gamma\left( \gamma+\sum_{j\in\bar{F}}a^\prime_{1j}m_j\right)\frac{z_{\bar{F}}^{\bf m}}{{\bf m}!}
\left(
\int_C h_{\tilde{A}_F,z_F}(x)^{-\gamma-\sum_{j\in\bar{F}}a^\prime_{1j}m_j}x^{\tilde{c}+\tilde{A}_{\bar{F}}{\bf m}}\frac{dx}{x}
\right),
\end{align}
where we write $a^\prime_{ij}$ for the $(i,j)$-entry of the matrix $A^\prime$. We set $\bar{F}_+=\{ j\notin F\mid a^\prime_{1j}\geq0\}$ (resp. $\bar{F}_-=\{ j\notin F\mid a^\prime_{1j}<0\}$). By the definition of beta function, we have
\begin{align}
\frac{
\Gamma\left( \gamma+\sum_{j\in\bar{F}}a^\prime_{1j}m_j\right)
}
{
\Gamma\left( \gamma+\sum_{j\in\bar{F}_-}a^\prime_{1j}m_j\right)
\Gamma\left( \sum_{j\in\bar{F}_+}a^\prime_{1j}m_j\right)
}
=&
\frac{1}{\left( 1-e^{-2\pi\ii(\gamma+\sum_{j\in\bar{F}_-}a^\prime_{1j}m_j)}\right)}\nonumber\\
&\int_{C^\prime}t^{\gamma+\sum_{j\in\bar{F}_-}a^\prime_{1j}m_j-1}(1-t)^{\sum_{j\in\bar{F}_+}a^\prime_{1j}m_j-1}dt,
\end{align}
where the contour $C^\prime$ begins from $t=1$, approaches $t=0$, turns around the origin in the negative direction, and goes back to $t=1$. We easily see that the inequality
\begin{equation}
\left|\Gamma\left( \gamma+\sum_{j\in\bar{F}}a^\prime_{1j}m_j\right)
\right|\leq C_1^{|{\bf m}|}\Gamma\left( \sum_{j\in\bar{F}_+}a^\prime_{1j}m_j\right)
\end{equation}
holds for some $C_1>0$. Let us observe that $a_{1j}^\prime <1$ for any $j\in\bar{F}$ since $F$ is a facet of $\Delta_A$. Taking into account that the contour $C$ can be taken so that it does not meet the vanishing locus of $h_F(x;z)$, we have an estimate

\begin{align}
&\left|
e^{\pi\ii\left(\gamma+\sum_{j\in\bar{F}}a^\prime_{1j}m_j\right)}\Gamma\left( \gamma+\sum_{j\in\bar{F}}a^\prime_{1j}m_j\right)
\left(
\int_C h_{F}(x;z)^{-\gamma-\sum_{j\in\bar{F}}a^\prime_{1j}m_j}x^{\tilde{c}+\tilde{A}_{\bar{F}}{\bf m}}\frac{dx}{x}
\right)
\right| \nonumber\\
\leq &C_2^{|{\bf m}|}\Gamma\left( \sum_{j\in\bar{F}_+}a^\prime_{1j}m_j\right),\label{eqn:2.41}
\end{align}
which ensures that 
$
\prod_{j\notin F}\mathfrak{D}_jf(z_F;c)
$ is convergent for any $z_j\in\C$ with $j\notin F$.

We claim that a set $\displaystyle\bigcup_{\substack{0\notin F<\Delta_A\\ F:\text{ facet}}}\{ \Psi_{F}(z;c)\}$ is a basis of solutions of $GG(A)$. For this purpose, we take a convergent regular triangulation $T$ of $A_F$. Then, $T$ induces a regular triangulation to each facet $F$ which does not contain the origin, i.e., if the symbol $T_F$ denotes the set $\{\s\in T\mid \s\subset F\}$, $T_F$ is a regular triangulation. We set $A_\s^\prime=Q^{-1}A_\s$ and fix a complete system of representatives $\{[\tilde{\bf k}^\prime(j)]\}_{j=1}^{r_\s}$ of $\Z^\s/\Z{}^tA_\s^\prime$. In view of Proposition \ref{prop:2.2}, we may set $\Phi_{A_F}(z_F;c)=\bigcup_{\s\in T_F}\{ \psi_{\s,\tilde{\bf k}^\prime(j)}(z_F:c)\}_{j=1}^{r_\s}$. By a direct computation, we obtain a relation $\prod_{j\notin F}\mathfrak{D}_j\left( e^{2\pi\ii{}^t\tilde{\bf k}Q^{-1}c}\psi_{\s,\tilde{\bf k}^\prime(j)}(z_F;c)\right)=\psi_{\s,{}^tA_\s^\prime\tilde{\bf k}+\tilde{\bf k}^\prime(j)}(z;c)$. Combining this fact with the exact sequence
\begin{equation}
0\rightarrow\Z^n/\Z{}^tQ\overset{{}^tA_\s^\prime\times}{\rightarrow}\Z^\s/\Z{}^tA_\s\rightarrow\Z^\s/\Z{}^tA_\s^\prime\rightarrow 0,
\end{equation}
we can see that $\displaystyle\bigcup_{\substack{0\notin F<\Delta_A\\ F:\text{ facet}}}\{ \Psi_{F}(z;c)\}$ is identical to the basis consisting of $\Gamma$-series discussed in Proposition \ref{prop:2.2}.

\end{proof}


\change{
\begin{rem}
The estimate (\ref{eqn:2.41}) shows that any element of $S_F$ is entire in variables $z_{\bar{F}}$ and its singular locus is contained in ${\rm Sing}(A_F)\times\C^{\bar{F}}$. Note that for any point $z_{\bar{F}}\in \C^{\bar{F}}$, the inclusion $(\C^F\setminus {\rm Sing}(A_F))\times\{ z_{\bar{F}}\}\hookrightarrow(\C^F\setminus {\rm Sing}(A_F))\times\C^{\bar{F}}$ is a homotopy equivalence.
\end{rem}
}

\noindent
Let us discuss the case when $c$ is fixed. For any $\tilde{l}=1,\dots,r$, and for any column vector $v$, we have
\begin{equation}
\frac{1}{r}
\begin{pmatrix}
\sum_{l=1}^rC_{1l}^{-1}C_{1l}M_l&\cdots&\sum_{l=1}^rC_{1l}^{-1}C_{rl}M_l\\
\vdots&\ddots&\vdots\\
\sum_{l=1}^rC_{rl}^{-1}C_{1l}M_l&\cdots&\sum_{l=1}^rC_{rl}^{-1}C_{rl}M_l
\end{pmatrix}
\begin{pmatrix}
C_{1\tilde{l}}^{-1}v\\
\hline
\vdots\\
\hline
C_{r\tilde{l}}^{-1}v
\end{pmatrix}
=
\begin{pmatrix}
C_{1\tilde{l}}^{-1}M_{\tilde{l}}v\\
\hline
\vdots\\
\hline
C_{r\tilde{l}}^{-1}M_{\tilde{l}}v
\end{pmatrix}.
\end{equation}

\noindent
Here, we have used the formula $\sum_{l^\prime=1}^rC_{l^\prime l}C^{-1}_{l^\prime \tilde{l}}=r\delta_{l\tilde{l}}$. Let us write $\Psi_i$ as $\Psi_i=(\Psi_{i1},\dots,\Psi_{ir})$. The computation above shows that for each $l=1,\dots,r$, the space ${\rm span}_\C\left\{ \sum_{i=1}^rC_{il}^{-1}\Psi_{ij}\right\}_{j=1}^m$ is monodromy invariant and is isomorphic to $\sol_{M_{A_F}(c+{\bf k}(l)),z}.$ Recall that we say that $c$ is very generic with respect to $T$ if for any simplex $\s\in T$ and for any vector ${\bf k}\in\Z^{\s}$, the vector $A_\s^{-1}(c+{\bf k})$ does not have an integral entry. Summarizing the argument above, we have the following theorem.

\begin{cor}\label{cor:FFW}
Suppose $z\notin{\rm Sing}(A)$ and $c$ is very generic with respect to a convergent regular triangulation $T$. Then, one has the following canonical decomposition of the monodromy representation.

\begin{equation}
\sol_{M_A(c),z}=\bigoplus_{\substack{0\notin F<\Delta_A\\ F:\text{ facet}}}S_F,
\end{equation}
where $S_F$ is a subspace of $\sol_{M_{A}(c),z}$ non-canonically isomorphic to $\displaystyle\bigoplus_{{\bf k}\in R}\sol_{M_{A_F}(c+{\bf k})}$. Here, $R$ is a complete system of representatives of $\Z^{n}/\Z A_F$.

\end{cor}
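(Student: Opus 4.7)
The plan is to specialize Theorem \ref{thm:2.7} to a fixed very generic parameter $c$ and combine it with the monodromy computation carried out immediately before the statement of the corollary.

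First, I would start from the conclusion $\sol_{GG(A),z} = \bigoplus_F S_F$ of Theorem \ref{thm:2.7} (where $F$ ranges over facets of $\Delta_A$ not containing the origin), a decomposition of $\C(T)$-vector spaces. By Proposition \ref{prop:2.2}, when $c$ is very generic with respect to the convergent regular triangulation $T$, the $\Gamma$-series basis $\Phi_T$ of $\sol_{GG(A),z}$ specializes to a basis of $\sol_{M_A(c),z}$; the very generic hypothesis precisely excludes poles of the Gamma factors appearing in each $\psi_{\sigma^d,\tilde{\bf k}}^{\sigma^u}$. Grouping the entries of $\Phi_T$ according to the facet $F$ containing the simplex $\sigma$ to which they belong yields $\sol_{M_A(c),z} = \bigoplus_F \bar{S}_F$, where $\bar{S}_F$ denotes the specialization of $S_F$.

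Second, I refine $\bar{S}_F$ into the summands indexed by ${\bf k} \in R$. The identity (\ref{eqn:MIS}) describes the monodromy action on $\Psi_F$ along a path $\gamma$, and the displayed computation just before the corollary shows that, after the change of basis given by the blocks $\sum_{i=1}^{r} C_{il}^{-1}\Psi_{ij}$ indexed by $l = 1,\ldots,r = [\Z^{n}:\Z A_F]$, this action decomposes into $r$ invariant blocks $V_l$ with monodromy matrix $M_l = M(c + A_{\bar{F}}{\bf k}(l))$. Since the basis $\Phi_{A_F}(z_F; c + A_{\bar{F}}{\bf k}(l))$ of $\sol_{M_{A_F}(c+{\bf k}(l)),z_F}$ transforms along $\gamma$ by exactly $M_l$, the monodromy modules $V_l$ and $\sol_{M_{A_F}(c+{\bf k}(l)),z_F}$ are isomorphic. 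The matrix ${}^tC \otimes I_m$ is invertible (it is the tensor product of a diagonal matrix of nonzero exponentials with a Fourier-type matrix on the finite abelian group $\Z^{n}/\Z A_F$), so one obtains $\bar{S}_F \cong \bigoplus_{{\bf k} \in R} \sol_{M_{A_F}(c+{\bf k}),z_F}$ as monodromy representations.

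The main obstacle is verifying that specialization at $c$ preserves the decomposition. The $\C(T)$-linear relations between the $\Psi_F$ and the basis $\Phi_T$ involve denominators of the form $e^{2\pi\ii L\cdot c} - \lambda$ arising from the exponential factors appearing in the $\Gamma$-series and in the matrix $C$. The very generic hypothesis on $c$ guarantees that these denominators are nonzero at the chosen $c$, so the decomposition passes through specialization and each of the isomorphisms above becomes an isomorphism of complex vector spaces compatible with monodromy. The non-canonical nature of the isomorphism $\bar{S}_F \cong \bigoplus_{{\bf k}\in R}\sol_{M_{A_F}(c+{\bf k})}$ reflects only the choice of the representative set $R$ and of the Fourier-type change of basis.
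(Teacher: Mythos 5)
Your proposal is correct and follows essentially the same route as the paper: start from the $\C(T)$-decomposition of Theorem \ref{thm:2.7}, specialize to a fixed very generic $c$ (the $\Gamma$-series basis $\Phi_T$ remains well-defined and linearly independent because $A_\sigma^{-1}(c+{\bf k})$ never has an integer entry), and then invoke the block computation with the matrix $C$ displayed immediately before the corollary to split each $\bar{S}_F$ into $r$ monodromy-invariant pieces with monodromy matrices $M_l = M(c + A_{\bar F}{\bf k}(l))$. One small wording slip: $C$ is the ordinary matrix product of a diagonal matrix of nonzero exponentials with a Fourier-type matrix on $\Z^n/\Z A_F$, not a tensor product; the tensor product with $I_m$ is only applied afterward to form ${}^tC\otimes I_m$. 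This does not affect the invertibility argument or the conclusion.
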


\begin{rem}
Corollary \ref{cor:FFW} is compatible with the description of the derived restriction of GKZ system obtained in \cite{FFW}.
\end{rem}

\begin{exa}
We consider GG system for the matrix $
A=
\begin{pmatrix}
1&0&0&1&1\\
0&1&0&1&0\\
0&0&1&-1&-1
\end{pmatrix}
$. Any solution $f(z;c)$ of $GG(A)$ is of the form

\begin{equation}\label{eqn:2.46}
f(z;c)=z_1^{-c_1}z_2^{-c_2}z_3^{-c_3}F\left(\frac{z_3z_4}{z_1z_2},\frac{z_3z_5}{z_1};\substack{c_1,c_2\\ c_3}\right),
\end{equation}
where $F\left(z,\zeta;\substack{c_1,c_2\\ c_3}\right)$ is subject to a system of difference-differential equations
\begin{align}
(\theta_z+\theta_\zeta+c_1)F\left(z,\zeta;\substack{c_1,c_2\\ c_3}\right)&=-F\left(z,\zeta;\substack{c_1+1,c_2\\ c_3}\right)\label{eqn:Phi11}\\
(\theta_z+c_2)F\left(z,\zeta;\substack{c_1,c_2\\ c_3}\right)&=-F\left(z,\zeta;\substack{c_1,c_2+1\\ c_3}\right)\\
(\theta_z+\theta_\zeta-c_3)F\left(z,\zeta;\substack{c_1,c_2\\ c_3}\right)&=F\left(z,\zeta;\substack{c_1,c_2\\ c_3+1}\right)\\
\partial_zF\left(z,\zeta;\substack{c_1,c_2\\ c_3}\right)&=F\left(z,\zeta;\substack{c_1+1,c_2+1\\ c_3-1}\right)\\
\partial_\zeta F\left(z,\zeta;\substack{c_1,c_2\\ c_3}\right)&=F\left(z,\zeta;\substack{c_1+1,c_2\\ c_3-1}\right)\label{eqn:Phi12}.
\end{align}
Here, we have set $\theta_z=z\frac{\partial}{\partial z}$ and $\theta_\zeta=\zeta\frac{\partial}{\partial \zeta}$. It is easy to see that the Newton polytope $\Delta_A$ has two facets which do not contain the origin (Figure \ref{fig:NP}).  According to Theorem \ref{thm:2.7}, we see that the facet $1234$ corresponds to a $2$-dimensional monodromy invariant subspace. It is straightforward to see that the boundary value $F\left(z;\substack{c_1,c_2\\ c_3}\right):=F\left(z,0;\substack{c_1,c_2\\ c_3}\right)$ is subject to a system of difference-differential equations
\begin{align}
(\theta_z+c_1)F\left(z;\substack{c_1,c_2\\ c_3}\right)&=-F\left(z;\substack{c_1+1,c_2\\ c_3}\right)\label{eqn:2f11}\\
(\theta_z+c_2)F\left(z;\substack{c_1,c_2\\ c_3}\right)&=-F\left(z;\substack{c_1,c_2+1\\ c_3}\right)\\
(\theta_z-c_3)F\left(z;\substack{c_1,c_2\\ c_3}\right)&=F\left(z;\substack{c_1,c_2\\ c_3+1}\right)\\
\partial_zF\left(z;\substack{c_1,c_2\\ c_3}\right)&=F\left(z;\substack{c_1+1,c_2+1\\ c_3-1}\right)\label{eqn:2f12}.
\end{align}
Setting $\alpha:=c_1,\beta:=c_2,\gamma:=1-c_3$ and ${}_2f_1\left(z;\substack{\alpha,\beta\\ \gamma}\right):=\sum_{m=0}^\infty\frac{\Gamma(\alpha+m)\Gamma(\beta+m)}{\Gamma(\gamma+m)m!}z^m$, it is easy to see that the function $e^{-\pi\ii(\alpha+\beta)}{}_2f_1\left(z;\substack{\alpha,\beta\\ \gamma}\right)$ is a solution of the system (\ref{eqn:2f11})-(\ref{eqn:2f12}). Since the function ${}_2f_1\left(z;\substack{\alpha,\beta\\ \gamma}\right)$ is essentially the Gau\ss' hypergeometric function, the analytic continuations of it give rise to a two dimensional space of functions over the field $\C(e^{2\pi\ii\alpha},e^{2\pi\ii\beta},e^{2\pi\ii\gamma})$. Thus, the analytic continuations of the function $e^{-\pi\ii(\alpha+\beta)}\sum_{n=0}^\infty{}_2f_1\left(z;\substack{\alpha+n,\beta+n\\ \gamma+n}\right)\zeta^n$ define a two dimensional monodromy invariant subspace of the system (\ref{eqn:Phi11})-(\ref{eqn:Phi12}). Note that this function is, up to a multiplication by a function in parameters $\alpha,\beta,\gamma$, equal to an analytic continuation of Horn's $\Phi_1$ function (\cite[Vol.1, \S5.7.1]{ErdelyiEtAl})
\begin{equation}
\Phi_1\left(z,\zeta;\substack{\alpha,\beta\\ \gamma}\right):=\sum_{m,n=0}^\infty\frac{(\alpha)_{m+n}(\beta)_m}{(\gamma)_{m+n}m!n!}z^m\zeta^n.
\end{equation}

\noindent
\change{The other monodromy invariant subspace corresponding to the facet $145$ is spanned by a function (\ref{eqn:2.46}) with function $F\left(z,\zeta;\substack{c_1,c_2\\ c_3}\right)$ given by
\begin{equation}\label{eqn:Phi2}
F\left(z,\zeta;\substack{c_1,c_2\\ c_3}\right)=z^{-c_2}\zeta^{c_2+c_3}\sum_{m_1,m_2=0}^\infty\frac{z^{-m_1}\zeta^{m_1+m_2}}{\Gamma(1-c_1-c_3-m_2)\Gamma(1-c_2-m_1)\Gamma(1+c_2+c_3+m_1+m_2)m_1!m_2!}.
\end{equation}
The series (\ref{eqn:Phi2}) is an entire function in $z^{-1},\zeta$. Note that (\ref{eqn:Phi2}) is a multiple of Horn's $\Phi_2$ series (\cite[Vol.1, \S5.7.1]{ErdelyiEtAl})
\begin{equation}
\Phi_2\left(x,y;\substack{\beta,\beta^\prime\\ \gamma}\right)
=
\sum_{m,n=0}^\infty\frac{(\beta)_{m}(\beta^\prime)_n}{(\gamma)_{m+n}m!n!}x^my^n,
\end{equation}
where $(x,y)=(\zeta/z,\zeta)$ and $\beta,\beta^\prime$ and $\gamma$ are suitable linear combinations of $c_1,c_2$ and $c_3$.
}

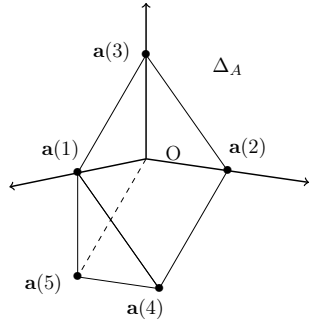
\begin{figure}[H]
\begin{center}
\scalebox{0.7}[0.7]{
\tdplotsetmaincoords{80}{130}
\begin{tikzpicture}[tdplot_main_coords]
\draw[thick,->] (0,0,0) -- (4,0,0);
\draw[thick,->] (0,0,0) -- (0,4,0);
\draw[thick,->] (0,0,0) -- (0,0,3);
\draw (0,0,2) -- (2,0,0) -- (2,2,-2) -- (0,2,0) --cycle;
\draw[-] (2,0,0) -- (2,2,-2) -- (2,0,-2) --cycle;
\draw[-, dashed] (0,0,0) -- (2,0,-2);
\node at (-0.8,0,0){O};
\node at (0,2,2){$\Delta_A$};
\node at (0,2.5,0.5){{\bf a}(2)};
\node at (2.5,0,0.5){{\bf a}(1)};
\node at (1,0,2.2){{\bf a}(3)};
\node at (3,2.5,-2.2){{\bf a}(4)};
\node at (3,0,-2){{\bf a}(5)};
\node at (2,0,0){$\bullet$};
\node at (0,2,0){$\bullet$};
\node at (0,0,2){$\bullet$};
\node at (2,2,-2){$\bullet$};
\node at (2,0,-2){$\bullet$};
\end{tikzpicture}
}
\caption{Newton polytope}
\label{fig:NP}
\end{center}
\end{figure}

\end{exa}

\section{Connection formula associated to a modification}\label{sec:connection}
In this section, we establish a connection formula among particular bases of GG system. Throughout this section, we assume that the matrix $A$ is homogeneous and the column vectors of it generate the lattice $\Z^n.$ Our aim is to construct a path along which we can perform an analytic continuation of the basis discussed in \S2.

\subsection{Connection formula of a Mellin-Barnes integral}\label{subsec:3.2}

We first recall the notion of {\it modification} ({\it perestroika}). Let $\Sigma(A)$ be the secondary polytope, i.e., $\Sigma(A)$ is a convex polytope in $\R^{N}$ of which the dual fan is identical to the secondary fan (\cite[Chapter 7, \S1.D]{GKZbook}). We can conclude that for each regular triangulation $T$, there is a unique vertex $v_T$ of $\Sigma(A)$ such that the normal cone $N_{\Sigma(A)}(v_T)$ of $\Sigma(A)$ at $v_T$ is equal to \change{the closure of} the cone $C_T\subset(\R^{N})^\vee$.
For any pair of regular triangulation $T$ and $T^\prime$, we say $T$ is adjacent to $T^\prime$ if the corresponding vertices $v_T$ and $v_{T^\prime}$ are connected by an edge of $\Sigma(A)$. The adjacency can be interpreted in a combinatorial way. We say $Z\subset\{ 1,\dots,N\}$ is a {\it circuit} if $\{ {\bf a}(i)\}_{i\in Z}$ is a minimal linearly dependent subset of $\{ {\bf a}(j)\}_{j=1}^N$. If $Z$ is a circuit, the corresponding subconfiguration $\{ {\bf a}(i)\}_{i\in Z}$ has only two regular triangulations. They are denoted by $T_+$ and $T_-$. This choice is not canonical and depends on the choice of the generator $u$ of $L_Z=\Ker (A_Z\times:\Z^{Z}\rightarrow \Z^{n})$. If we fix a generator $u$ of $L_Z$, no entry of $u$ is zero by definition. We put $Z_+=\{ i\mid u_i>0\}$ and $Z_-=\{ i\mid u_i<0\}.$ Then $T_+$ (resp. $T_-$) is defined by $\{ Z\setminus\{ i\}\}_{i\in Z_+}$ (resp. $\{ Z\setminus\{ i\}\}_{i\in Z_-}$). A subconfiguration $I\subset\{ 1,\dots,N\}$ is called a {\it corank $1$ configuration} if the rank of $\Ker(A_I\times:\Z^{I}\rightarrow\Z^{n})$ is $1$. We say that a regular polyhedral subdivision $Q$ of $A$ is an {\it almost triangulation} if any refinement of $Q$ is a triangulation. The following propositions are standard (cf.\cite[Chap.7, \S2]{GKZbook}).

\begin{prop}\label{prop:3.1}
A regular polyhedral subdivision $Q$ of $A$ is an almost triangulation if and only if each cell of $Q$ has at most corank $1$ and there is a unique circuit $Z$ such that any corank $1$ cell contains $Z.$
\end{prop}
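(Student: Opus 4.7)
The plan is to prove both implications by analyzing how the non-simplex cells of $Q$ can be refined within the class of regular polyhedral subdivisions. For the necessity, suppose $Q$ is an almost triangulation. I would first rule out cells of corank $\geq 2$: if some $\sigma \in Q$ had corank at least $2$, then $\sigma$ contains more than one linearly independent circuit; picking a single circuit $Z \subset \sigma$ and performing a bistellar flip on $Z$ inside $Q$ would yield a regular refinement $Q'$ of $Q$ in which the cells coming from $\sigma$ still have positive corank (since the corank drops by only one after a single flip). Such a $Q'$ is not a triangulation, contradicting the almost triangulation hypothesis.

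Next I would establish the uniqueness of the circuit $Z$. If two corank-$1$ cells $\sigma_1, \sigma_2 \in Q$ carried distinct circuits $Z_1 \neq Z_2$, then flipping only $\sigma_1$ along $Z_1$ (keeping $\sigma_2$ untouched) would give a regular refinement of $Q$ in which $\sigma_2$ remains a non-simplex cell. This again contradicts $Q$ being an almost triangulation, forcing $Z_1 = Z_2$.

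For the converse, assume every cell of $Q$ has corank at most $1$ and that all corank-$1$ cells contain a common circuit $Z$. Let $Q'$ be an arbitrary refinement of $Q$. Simplicial cells (corank $0$) admit no nontrivial subdivision of the same support as a union of maximal cells, so they are untouched. Each corank-$1$ cell has a unique (up to sign) linear dependence, namely the one given by $Z$, and thus admits only the two bistellar triangulations $T_+$ and $T_-$ of $Z$ as refinements. Hence every maximal cell of $Q'$ is a simplex, i.e., $Q'$ is a triangulation.

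The main obstacle is to verify rigorously that the partial single-circuit flips used above produce honest \emph{regular} subdivisions refining $Q$, rather than merely combinatorial subdivisions. The cleanest route is to argue on the secondary polytope $\Sigma(A)$: faces of $\Sigma(A)$ are in bijection with regular polyhedral subdivisions, with vertices corresponding to regular triangulations and edges to almost triangulations. Under this dictionary, $Q$ being an almost triangulation amounts to the face of $\Sigma(A)$ that encodes $Q$ being a one-dimensional edge with exactly two endpoints $T_+$ and $T_-$, and the combinatorial content of that edge translates directly into the corank condition and the uniqueness of the shared circuit $Z$. I would use this bijection (via \cite[Chapter 7]{GKZbook}) as the source of regularity whenever a refinement is constructed by flipping a single circuit.
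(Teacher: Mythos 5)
The paper treats Proposition \ref{prop:3.1} as a standard fact and cites \cite[Chap.7, \S2]{GKZbook} without giving a proof, so there is no internal argument to compare against; what I can do is point out where your sketch breaks down.

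The sufficiency direction has a genuine flaw: as written, your argument never uses the hypothesis that all corank-$1$ cells share a \emph{common} circuit $Z$, and it would therefore ``prove'' that any regular subdivision whose cells all have corank at most $1$ is an almost triangulation. That is false. Take five collinear points $1<2<3<4<5$ (homogenized to a $2\times 5$ matrix) and the regular subdivision $Q=\{123,345\}$: both maximal cells have corank $1$, but their circuits $\{1,2,3\}$ and $\{3,4,5\}$ are different. Lowering the lift of vertex $2$ alone produces a regular refinement $Q'=\{12,23,345\}$ of $Q$ that is not a triangulation, so $Q$ is not almost. The gap in your reasoning is the step ``each corank-$1$ cell admits only $T_+$ and $T_-$ as refinements, hence every maximal cell of $Q'$ is a simplex'': a regular refinement is free to resolve some corank-$1$ cells and leave others untouched. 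What rules this out when all corank-$1$ cells contain the same circuit $Z$ is that the sign of $\omega'\cdot u$ (with $u$ a generator of $L_Z$) is a single scalar that simultaneously determines, for every corank-$1$ cell, whether it is split into $T_+$, into $T_-$, or left alone; a strict refinement forces this sign to be nonzero, hence forces every corank-$1$ cell to collapse to simplices at once. This is precisely where the common-circuit hypothesis does its work, and your proof must make it explicit.

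Two smaller points. In the necessity direction you speak of ``performing a bistellar flip on $Z$ inside $Q$,'' but bistellar flips are operations on triangulations, not on subdivisions with higher-corank cells; what you actually want is to choose a weight vector in $\overline{C_Q}$ that refines only along $Z$, and then check the resulting cells still have positive corank, which is not automatic and needs an argument (e.g., via the secondary polytope of $A_\sigma$ being at least $2$-dimensional when ${\rm corank}(\sigma)\ge 2$). Finally, the closing appeal to ``edges of $\Sigma(A)$ correspond to almost triangulations'' is essentially the statement you are trying to prove in combinatorial form, so invoking it as a black box makes the argument circular; the content of the proposition is exactly the translation of the edge condition into the corank-and-shared-circuit condition.
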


\begin{prop}\label{prop:3.2}
Let $T$ and $T^\prime$ be a pair of regular triangulations such that $T$ is adjacent to $T^\prime.$ Let $e$ be the edge of $\Sigma(A)$ connecting $v_T$ and $v_{T^\prime}$. Any weight vector $\omega$ in the relative interior of the normal cone $N_{\Sigma(A)}(e)$ of $e$ defines the same regular polyhedral subdivision $S$. Moreover, $S$ is an almost triangulation whose refinements are given by $T$ and $T^\prime.$
\end{prop}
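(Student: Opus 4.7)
The plan is to deduce both assertions from the general structure of the secondary polytope/fan, using that the map $\omega\mapsto S(\omega)$ is piecewise-constant with respect to the secondary fan ${\rm Fan}(A)$ and that ${\rm Fan}(A)$ coincides with the normal fan of $\Sigma(A)$.

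First, I would recall the fundamental correspondence: the face lattice of $\Sigma(A)$ is anti-isomorphic to ${\rm Fan}(A)$, with vertices $v_T$ corresponding to the top-dimensional cones $C_T$ attached to regular triangulations $T$, and $k$-dimensional faces of $\Sigma(A)$ corresponding to cones of ${\rm Fan}(A)$ of codimension $k$. In particular, the edge $e$ between $v_T$ and $v_{T^\prime}$ yields a cone $N_{\Sigma(A)}(e)$ of codimension $1$ which is a facet of both $C_T$ and $C_{T^\prime}$ and of no other top-dimensional cone of ${\rm Fan}(A)$. Since by construction $S(\omega)$ depends only on the cone of ${\rm Fan}(A)$ containing $\omega$ in its relative interior, any $\omega$ in ${\rm relint}(N_{\Sigma(A)}(e))$ yields one and the same subdivision $S$; this proves the first assertion.

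Next, I would show that both $T$ and $T^\prime$ refine $S$. Indeed, $N_{\Sigma(A)}(e)$ lies in the closures of $C_T$ and $C_{T^\prime}$, so perturbing $\omega$ into $\mathrm{relint}(C_T)$ (resp.\ $\mathrm{relint}(C_{T^\prime})$) produces a subdivision refining $S$, namely $T$ (resp.\ $T^\prime$). Now $S\neq T,T^\prime$ because $\dim N_{\Sigma(A)}(e) < \dim C_T$; hence $S$ is not a triangulation. To show $S$ is an almost triangulation I would use the characterisation of Proposition \ref{prop:3.1}. Since $T$ and $T^\prime$ are two distinct triangulations refining $S$, there is at least one cell $\sigma\in S$ whose induced triangulations in $T$ and $T^\prime$ differ; such a $\sigma$ has corank $\geq 1$ and supports a circuit $Z\subset\sigma$. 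Conversely, if some cell of $S$ had corank $\geq 2$, or if two corank-$1$ cells contained distinct circuits, one could independently flip at different circuits and produce a regular triangulation $T^{\prime\prime}$ distinct from $T$ and $T^\prime$ which still refines $S$; this would yield a third top-dimensional cone of ${\rm Fan}(A)$ adjacent to $N_{\Sigma(A)}(e)$, contradicting the fact that a codimension-$1$ cone of a complete fan is contained in the closure of exactly two top-dimensional cones.

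The main obstacle is the step asserting \emph{uniqueness} of the circuit $Z$: one needs to argue that all corank-$1$ cells of $S$ support the \emph{same} minimal dependence. I expect this to follow from the basic fact that each pair $(T,T^\prime)$ adjacent in $\Sigma(A)$ determines a single bistellar flip, together with the observation that flips at distinct circuits commute and would produce a fourth vertex of $\Sigma(A)$ in the star of $e$; alternatively, one can invoke the explicit description of the modification operation in \cite[Chap.7, §2.C]{GKZbook}, where the circuit is read off directly from the lattice direction of the edge $e$ in $\Sigma(A)$. Once this uniqueness is in hand, Proposition \ref{prop:3.1} applies and identifies $S$ as an almost triangulation whose only refinements are $T$ and $T^\prime$.
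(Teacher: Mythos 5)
The paper does not actually prove Proposition~\ref{prop:3.2}: it introduces both Proposition~\ref{prop:3.1} and Proposition~\ref{prop:3.2} with the sentence ``The following propositions are standard (cf.\ \cite[Chap.7, \S2]{GKZbook})'' and gives no argument. So there is no in-paper proof to compare against; you are reconstructing a proof that the author delegates to Gelfand--Kapranov--Zelevinsky.

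Your reconstruction follows what is indeed the standard route: use the anti-isomorphism between the face poset of $\Sigma(A)$ and the cone poset of the secondary fan, note that $N_{\Sigma(A)}(e)$ is a codimension-$1$ cone lying in the closure of exactly the two maximal cones $C_T$ and $C_{T'}$, deduce that $S(\omega)$ is constant on $\mathrm{relint}\,N_{\Sigma(A)}(e)$ and is refined by exactly $T$ and $T'$, and finally translate ``exactly two refinements'' into the characterisation of Proposition~\ref{prop:3.1}. The first three steps are fine. The gap is the one you flag yourself in the last paragraph: the implication ``$S$ has a corank-$\geq 2$ cell, or two cells supporting distinct circuits $\Rightarrow$ more than two \emph{regular} triangulations refine $S$'' is not automatic. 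One must know that the regular triangulations refining $S$ are precisely the vertices of the face $F_S$ of $\Sigma(A)$ dual to $C_S$, and that performing independent flips in different cells (or in a corank-$2$ cell) actually yields a \emph{regular} triangulation of the whole configuration, so that $F_S$ would have $>2$ vertices, contradicting $F_S = e$. That step is exactly what \cite[Chap.7, \S2]{GKZbook} supplies (via the description of faces of $\Sigma(A)$ and their restrictions to cells of $S$), so invoking it is legitimate but should be made explicit rather than treated as an ``observation''. With that citation supplied, your sketch is a correct proof of the statement the paper leaves as standard.
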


One also has a precise description of the change of adjacent regular triangulations as follows. This is what we call a modification of a regular triangulation. Let $T$ and $T^\prime$ be a pair of adjacent regular triangulations. For any corank $1$ configuration $I$ which contains a circuit $Z$, there are only two triangulations. Namely, they are $T_+=\{ I\setminus\{ i\}\}_{i\in Z_+}$ (resp. $T_-=\{ I\setminus\{ i\}\}_{i\in Z_-}$). Let $Q$ be the intermediate regular polyhedral subdivision of Proposition \ref{prop:3.1}. We decompose $Q$ as $Q=T_{irr}\cup\{ I_s\}_s$ where the irrelevant part $T_{irr}$ consists of simplices and $I_s$ are all corank $1$ configuations. Moreover, all the corank $1$ configurations $I_s$ contain the same circit $Z$ in view of Proposition \ref{prop:3.1} and \ref{prop:3.2}. Since $T$ (or $T^\prime$) is a refinement of $Q$ and $T$ is a triangulation, we see that each $I_s$ has a maximal space dimension $n$, i.e., the convex hull of the origin and the points$\{\{ {\bf a}(i)\}_{i\in I_s}\}$ has a non-zero Euclidian volume. This implies that $|I_s|=n+1$. Thus, if we write $T_+(I_s)$ and $T_-(I_s)$ for the pair of regular triangulations coming from $I_s$, we have $T=T_{irr}\cup \{ T_+(I_s)\}_s$ and $T^\prime=T_{irr}\cup \{ T_-(I_s)\}_s$. This is also denoted by $T=T_{irr}\cup T_+(Z)$ and $T^\prime =T_{irr}\cup T_-(Z)$. Note that $T_{irr}$ can be empty.

\begin{exa}\label{exa:313}
\change{
Let us consider a configuration matrix
$A=
\begin{pmatrix}
1&1&1&1&1\\
0&1&2&0&0\\
0&0&0&1&-1
\end{pmatrix}
$. We fix a basis $\left\{ {}^t(-1,2,-1,0,0),{}^t(-2,0,0,1,1)\right\}$ of $L_A$ through which we identify $L_A^\vee$ with $\Z^2$. The secondary fan and the corresponding regular polyhedral subdivisions are listed in Figure \ref{fig:Mod}. For example, the first quadrant corresponds to the regular triangulation $T_1=\{ 134,135\}$ and the second quadrant corresponds to $T_2=\{ 124,125,234,235\}$. The intermediate regular polyhedral subdivision $Q$ between them is given by $Q=\{ 1234,1235\}$, of which each element is a corank $1$ configuration and the common circuit is $Z=123$. 
}

\begin{figure}[H]
\begin{center}
\scalebox{0.5}[0.5]{
\begin{tikzpicture}
\draw[-] (-6,0) -- (6,0);
\draw[-] (0,0) -- (0,6);
\draw[-] (0,0) -- (-3,-6);
\node at (1.5,4){\scalebox{2}{$T_1=$}};
\node at (2.7,4){$1$};
\node at (4,4.3){$2$};
\node at (5.3,4){$3$};
\node at (3,5.3){$4$};
\node at (3,2.7){$5$};
\node at (3,3){$\bullet$};
\node at (3,5){$\bullet$};
\node at (5,4){$\bullet$};
\node at (3,4){$\bullet$};
\node at (4,4){$\circ$};
\draw (3,4) -- (5,4);
\draw (3,3) -- (5,4) -- (3,5) -- cycle;
\node at (-5,3){$\bullet$};
\node at (-5,5){$\bullet$};
\node at (-3,4){$\bullet$};
\node at (-5,4){$\bullet$};
\node at (-4,4){$\bullet$};  
\node at (-6,4){\scalebox{2}{$T_2=$}};
\draw (-5,3) -- (-5,5) -- (-3,4) -- cycle;
\draw (-4,4) -- (-5,4);
\draw (-4,4) -- (-5,3);
\draw (-4,4) -- (-5,5);
\draw (-3,4) -- (-5,4);
\node at (-6,-5){$\bullet$};
\node at (-6,-3){$\bullet$};
\node at (-4,-4){$\bullet$};
\node at (-6,-4){$\circ$};
\node at (-5,-4){$\bullet$};
\node at (-7,-4){\scalebox{2}{$T_3=$}};
\draw (-6,-5) -- (-6,-3) -- (-4,-4) -- cycle;
\draw (-5,-4) -- (-6,-5);
\draw (-5,-4) -- (-6,-3);
\draw (-5,-4) -- (-4,-4);
\node at (3,-4){$\bullet$};
\node at (3,-2){$\bullet$};
\node at (5,-3){$\bullet$};
\node at (4,-3){$\circ$};
\node at (3,-3){$\circ$};
\node at (1.5,-3){\scalebox{2}{$T_4=$}};
\draw (3,-4) -- (3,-2) -- (5,-3) -- cycle;
\node at (7,0) {$\bullet$};
\node at (7,1){$\bullet$};
\node at (7,-1){$\bullet$};
\node at (9,0){$\bullet$};
\node at (8,0){$\circ$};
\draw (7,-1) -- (9,0) -- (7,1) -- cycle;
\node at (0,8){$\bullet$};
\node at (-1,8){$\bullet$};
\node at (-1,7){$\bullet$};
\node at (-1,9){$\bullet$};
\node at (1,8){$\bullet$};
\draw (-1,9) -- (-1,7) -- (1,8) -- cycle;
\draw (-1,8) -- (1,8);
\node at (-9,0) {$\bullet$};
\node at (-9,1){$\bullet$};
\node at (-9,-1){$\bullet$};
\node at (-7,0){$\bullet$};
\node at (-8,0){$\bullet$};
\draw (-9,-1) -- (-9,1) -- (-7,0) -- cycle;
\draw (-9,1) -- (-8,0);
\draw (-9,-1) -- (-8,0);
\draw (-8,0) -- (-7,0);
\node at (-3,-7){$\bullet$};
\node at (-4,-7){$\circ$};
\node at (-4,-6){$\bullet$};
\node at (-4,-8){$\bullet$};
\node at (-2,-7){$\bullet$};
\draw (-4,-6) -- (-4,-8) -- (-2,-7) -- cycle;
\end{tikzpicture}
}
\caption{The secondary fan and the corresponding regular polyhedral subdivisions}
\label{fig:Mod}
\end{center}
\end{figure}
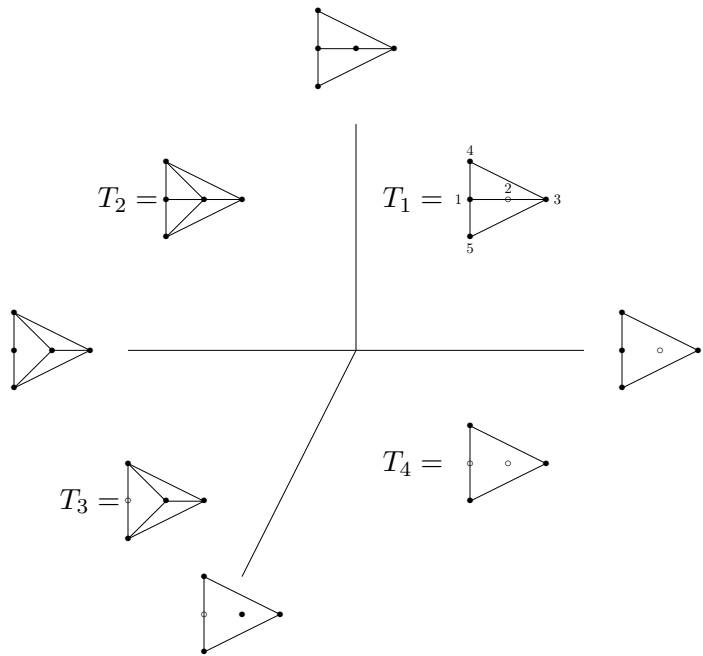

\end{exa}

\vspace{2mm}

Now, let us concentrate on the GG system coming from a corank $1$ configuration. Namely, we concentrate on the GG sytem $GG(A_I)$ for a corank $1$ configuration $I$. \change{By exchanging indices and omitting the suffix $I$ of $A_I$,} we write $A=\left({\bf a}(1)|\dots|{\bf a}(n+1)\right)$ for a corank $1$ configuration which may not generate the ambient lattice $\Z^{n}$ but span the vector space $\Q^n$. This is a tentative notation and used only in the rest of this subsection.
We put $I=\{ 1,\dots,n+1\}$. Let us take a generator $u$ of the lattice $L_A$. We put $I_{\geq 0}=\{ j\in I\mid u_j\geq 0\},$ $I_{0}=\{ j\in I\mid u_j= 0\},$ $Z_{+}=\{ j\in I\mid u_j>0 \}$, and $Z_-=\{ j\in I\mid u_j<0\}$. Note that the circuit $Z$ contained in $I$ is given by $Z=Z_+\cup Z_-$. We fix an element $j_0\in Z_+$ and put $\s=I\setminus\{ j_0\}.$  Let $\{ {\bf e}_i\}_{i\in\s}$ be the standard basis of the lattice $\Z^\s$. We set ${\bf 1}_{-}=\sum_{i\in Z_-}{\bf e}_i$. Consider an integral 

\begin{equation}\label{eqn:3.1}
I_\s(z_I;c)=\frac{1}{2\pi\ii}\int_{C}\frac{\Gamma(-s)\displaystyle\prod_{i\in Z_-}\Gamma\left(p_{\s i}(c+{\bf a}(j_0)s)\right)}{\displaystyle\prod_{i\in\s\cap I_{\geq 0}}\Gamma\left(1-p_{\s i}(c+{\bf a}(j_0)s)\right)}(e^{\pi\ii{\bf 1}_{-}}z_\s)^{-A_\s^{-1}(c+{\bf a}(j_0)s)}(e^{\pi\ii}z_{j_0})^sds,
\end{equation}
where $C$ is a vertical contour from $-\ii\infty$ to $+\ii\infty$ separating two spirals of poles of Gamma functions in the integrand (\cite[Chapter 4]{SlaterBook}). Note that $(e^{\pi\ii{\bf 1}_{-}}z_\s)^{A_\s^{-1}c}I_\s(z_I;c)$ depends only on circuit variables $z_+:=(z_j)_{j\in Z_+}$ and $z_-:=(z_j)_{j\in Z_-}$. By Stirling's formula, one can easily prove that this integral is convergent if 
\begin{equation}\label{eqn:3.2}
|\arg\left( (e^{\pi\ii{\bf 1}_{-}}z_\s)^{-A_\s^{-1}{\bf a}(j_0)}(e^{\pi\ii}z_{j_0})\right)|<\pi.
\end{equation}

We rewrite the convergence condition (\ref{eqn:3.2}). Observe that $
\begin{pmatrix}
-A_\s^{-1}{\bf a}(j_0)\\
1
\end{pmatrix}
=
u_{j_0}^{-1}u
.$ Therefore, we have
\begin{equation}
|\arg\left( (e^{\pi\ii{\bf 1}_{-}}z_\s)^{-A_\s^{-1}{\bf a}(j_0)}(e^{\pi\ii}z_{j_0})\right)|=u_{j_0}^{-1}|\arg (e^{\pi\ii{\bf 1}_{-}}z_\s,e^{\pi\ii}z_{j_0})\cdot
u|,
\end{equation}
where the symbol $\cdot$ denotes the dot product. Thus, the convergence condition can be written as 
\begin{align}
 &-\pi<\sum_{i\in Z_-}\arg(e^{\pi\ii}z_i)u_{j_0}^{-1}u_i+\sum_{j\in Z_+\cap\s}\arg(z_j)u_{j_0}^{-1}u_j+\arg(e^{\pi\ii}z_{j_0})<\pi\\
\iff&-2\pi u_{j_0}<\sum_{i\in Z_-}\arg(e^{\pi\ii}z_i)u_i+\sum_{j\in Z_+}\arg(z_j)u_j<0.
\end{align}
Note that the last condition depends only on the circuit variables $(z_j)_{j\in Z}$.

We consider an analytic continuation of (\ref{eqn:3.1}) via Mellin-Barnes contour throw. This method has been previously discussed by several authors in various settings (\cite[Chap.4]{SlaterBook}). We first \change{note} that $(e^{\pi{\bf 1}_{-}}z_\s)^{A_\s^{-1}c}I_\s(z_I;c)$ is a univariate function of a complex variable
\begin{equation}
\zeta=(e^{\pi{\bf 1}_{-}}z_\s)^{-A_\s^{-1}{\bf a}(j_0)}e^{\pi\ii}z_{j_0}=e^{\pi\ii}(e^{\pi{\bf 1}_{-}}z_{-})^{u_{j_0}^{-1}u_-}(z_{+})^{u_{j_0}^{-1}u_+},
\end{equation}
where $u_+$ (resp. $u_-$) is the vector consisting of entries of $u$ labeled by the set $Z_+$ (resp. $Z_-$). The distribution of poles are as in the Figure \ref{fig:2}. If we evaluate the integral along the poles $s=0,1,2,\dots$, we have
$I_\s(z_I;c)=\psi_{\s\cap I_{\geq 0}}^{Z_-}(z;c)$.

\begin{figure}[h]
\begin{center}
\begin{tikzpicture}
\draw (0,0) node[below right]{O}; 
\draw[thick, ->] (-4,0)--(3.2,0) node[right]{$\re s$} ; 
\draw[thick, ->] (0,-1.2)--(0,3.2) node[above]{$\im s$} ; 
\node at (0,0){$\times$};
\node at (1,0){$\times$};
\node at (2,0){$\times$};
\node at (1,-0.5){$1$};
\node at (2,-0.5){$2$};
\node at (3,-0.5){$\cdots$};
\node at (-1,2){$\times$};
\node at (-2.5,2){$\times$};
\node at (-4,2){$\times$};
\node at (-5,2){$\cdots$};
\node at (-0.6,1.5){\scalebox{0.7}{$\frac{u_{j_0}}{u_i}p_{\s i}(c)$}};
\node at (-2.3,1.5){\scalebox{0.7}{$\frac{u_{j_0}}{u_i}(p_{\s i}(c)+1)$}};
\node at (-1,-1){$\times$};
\node at (-2,-1){$\times$};
\node at (-3,-1){$\times$};
\node at (-4,-1){$\cdots$};
\end{tikzpicture}
\caption{distribution of poles}
\label{fig:2}
\end{center}
\end{figure}
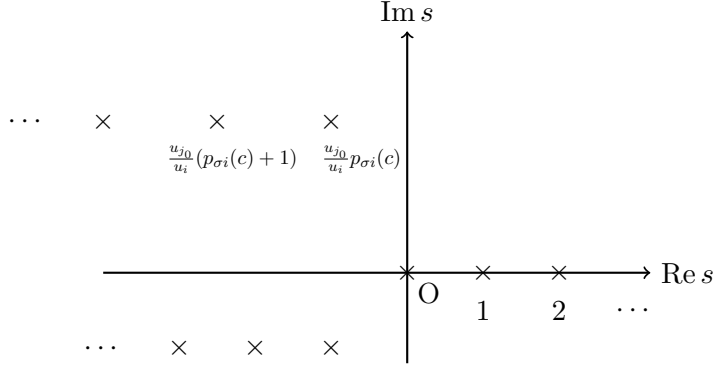

Let us fix an element $i_0\in Z_-$ and consider the evaluation of $I_\s(z_I;c)$ along a negative spiral $s=\frac{u_{j_0}}{u_{i_0}}(p_{\s i_0}(c)+m)$ ($m\in\Z_{\geq 0}$). We put $\s^\prime=I\setminus\{ i_0\}$. We take a vector $v_\s\in\C^{n+1}$ so that the $j_0$-th entry is $0$ and the equality $Av=-c$ is true. Note that $v_{\s}$ is uniquely determined. In view of the equality $
A_{\s^\prime}^{-1}Av_\s=-A_{\s^\prime}^{-1}c$, we obtain relations
\begin{equation}\label{EqC}
p_{\s^\prime j_0}({\bf a}(i_0))\cdot p_{\s i_0}(c)=p_{\s^\prime i}(c)
\end{equation}
and
\begin{equation}\label{EqD}
p_{\s^\prime i}({\bf a}(i_0))\cdot p_{\s i_0}(c)+p_{\s i}(c)=p_{\s^\prime i}(c)\quad\quad (i\in\s^\prime\setminus\{ j_0\}).
\end{equation}


\noindent
A direct computation employing (\ref{EqC}) and (\ref{EqD}) shows that
\begin{equation}
I_\s(z_I;c)=\sum_{i\in Z_-}\frac{1}{p_{\s i}({\bf a}(j_0))}\psi_{I_{\geq 0}\setminus\{ j_0\}}^{(Z_-\setminus\{ i\})\cup\{ j_0\}}(z_I;c).\label{EVA}
\end{equation}

\noindent
In the same way, we choose any $\tilde{\bf k}_\s=(\tilde{k}_i)_{i\in\s}\in \Z^{\s}$ and consider a function $I_\s(e^{2\pi\ii\tilde{\bf k}_\s}z_\s,z_{j_0};c)$. The integral is convergent if

\begin{equation}\label{CommonSector}
-2\pi u_{j_0}<-2\pi u_{j_0}\sum_{i\in\s}\tilde{k}_i p_{\s i}({\bf a}(j_0))+\sum_{i\in Z_-}\arg(e^{\pi\ii}z_i)u_i+\sum_{j\in Z_+}\arg(z_j)u_j<0.
\end{equation}

\noindent
By a direct computation, we have a formula
\begin{equation}\label{eqn:3.11}
I_\s(e^{2\pi\ii\tilde{\bf k}_\s}z_\s,z_{j_0};c)=\displaystyle\sum_{i\in Z_-}\frac{1}{p_{\s i}({\bf a}(j_0))}\psi_{I_{\geq 0}\setminus\{ j_0\},(\tilde{\bf k}_{\s\setminus\{ i\}},\overset{j_0}{\breve{0}})}^{(Z_-\setminus\{ i\})\cup\{ j_0\}}(z_I;c).
\end{equation}


\subsection{Some lemmata needed for the construction of the path of analytic continuation}\label{subsec:3.1}
Suppose that $T$ and $T^\prime$ are adjacent regular triangulations of the configuration matrix $A$. We inherit the notation of the previous subsection. First, we prove a
\begin{prop}\label{prop:CS}
Let $Q$ be the intermediate almost triangulation and $Z$ be the common circuit. Then, one can choose a complete system of representatives $\{ \tilde{\bf k}_\s \}$ of $\Z^{\s}/\Z {}^tA_\s$ for any corank $1$ configuration $I$ in $Q$ and for any $\s=I\setminus\{ j_0\}$ with $j_0\in Z_+$ so that the inequalities (\ref{CommonSector}) define a non-empty open subset in the space of circuit variables $\C^Z$.
\end{prop}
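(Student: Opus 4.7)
My plan is to rewrite (\ref{CommonSector}) as an inequality in a single scalar parameter, then use homogeneity of $A$ to select a representative in each coset. Set $\Theta := \sum_{i\in Z}\arg(z_i)u_i$ and $M(\tilde{\bf k}) := \sum_{i\in\sigma}\tilde{k}_i u_i$. From $Au=0$ one gets $A_\sigma u_\sigma = -u_{j_0}{\bf a}(j_0)$, hence $A_\sigma^{-1}{\bf a}(j_0) = -u_\sigma/u_{j_0}$, and therefore $-2\pi u_{j_0}\sum_{i\in\sigma}\tilde{k}_i p_{\sigma i}({\bf a}(j_0)) = 2\pi M(\tilde{\bf k})$. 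Homogeneity of $A$ supplies $\phi\in(\Z^n)^\vee$ with $\phi({\bf a}(j))=1$ for every $j$; applying $\phi$ to $Au=0$ gives $\sum_{j\in I}u_j=0$, so $\alpha := \sum_{j\in Z_+}u_j = -\sum_{i\in Z_-}u_i > 0$. Rearranging, (\ref{CommonSector}) becomes
\begin{equation*}
\Theta \in J(I,j_0,\tilde{\bf k}) := \bigl(\pi\alpha - 2\pi u_{j_0} - 2\pi M(\tilde{\bf k}),\ \pi\alpha - 2\pi M(\tilde{\bf k})\bigr),
\end{equation*}
an open interval of length $2\pi u_{j_0}$ depending on $(I,j_0,\tilde{\bf k})$ only through $j_0$ and $M(\tilde{\bf k})$.

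Next I would determine how $M(\tilde{\bf k})$ varies inside a coset of $\Z^\sigma/\Z{}^tA_\sigma$. For any $v\in\Z^n$, using $A_\sigma u_\sigma = -u_{j_0}{\bf a}(j_0)$ one finds
\begin{equation*}
M(\tilde{\bf k}+{}^tA_\sigma v) - M(\tilde{\bf k}) = v\cdot A_\sigma u_\sigma = -u_{j_0}\,{\bf a}(j_0)\cdot v.
\end{equation*}
Here the homogeneity assumption is crucial: $\phi({\bf a}(j_0))=1$ forces the entries of ${\bf a}(j_0)$ to be coprime, so the image of $v\mapsto{\bf a}(j_0)\cdot v$ equals $\Z$. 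Consequently, as $\tilde{\bf k}$ runs through a fixed coset, $M(\tilde{\bf k})$ takes every value in an arithmetic progression of common difference $u_{j_0}$.

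To conclude, I would fix a generic $\Theta_0\in\R$ so that, for every $j_0$ and every coset appearing in the problem, the value $(\pi\alpha-\Theta_0)/(2\pi)$ does not lie on the associated progression. For each triple $(I,j_0,\text{coset})$ the condition $\Theta_0\in J(I,j_0,\tilde{\bf k})$ becomes $M(\tilde{\bf k})\in\bigl((\pi\alpha-\Theta_0)/(2\pi) - u_{j_0},\,(\pi\alpha-\Theta_0)/(2\pi)\bigr)$, an open interval of length $u_{j_0}$ that therefore meets the coset's progression in exactly one point; I pick the representative $\tilde{\bf k}$ so that $M(\tilde{\bf k})$ is this point. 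Doing this independently for all triples yields the required complete systems of representatives, and all inequalities (\ref{CommonSector}) then hold simultaneously on the nonempty open slab $\{z\in\C^Z : |\sum_{i\in Z}\arg(z_i)u_i - \Theta_0|<\delta\}$ of $\C^Z$ for some small $\delta>0$.

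The main obstacle is exactly the gcd issue that homogeneity eliminates: without it, $M$ would only jump by $u_{j_0}\gcd({\bf a}(j_0))$ within a coset, which may exceed $u_{j_0}$ and leave the target interval empty of valid representatives. Thus the selection step succeeds precisely because $A$ is assumed homogeneous, making each column primitive.
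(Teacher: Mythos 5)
Your argument is correct, and it reaches the same conclusion via a closely related but not identical mechanism. The crux in both proofs is that, within a fixed coset of $\Z^\sigma/\Z{}^tA_\sigma$, one can translate the interval of admissible arguments in (\ref{CommonSector}) by arbitrary integer multiples of $2\pi u_{j_0}$. The paper gets this by proving $[\mathbf{1}_\sigma]=0$ in $\Z^\sigma/\Z{}^tA_\sigma$ (using $|A_\sigma^{-1}{\bf a}(j)|=1$ together with the perfect pairing $\Z^\sigma/\Z{}^tA_\sigma\times\Z^n/\Z A_\sigma\to\Q/\Z$ and the fact that $\{{\bf a}(j)\}_{j\in\bar\sigma}$ generates $\Z^n/\Z A_\sigma$), and then shifts representatives by $r\mathbf{1}_\sigma$. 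You instead compute directly that $M(\tilde{\bf k}+{}^tA_\sigma v)-M(\tilde{\bf k})=-u_{j_0}\,{\bf a}(j_0)\cdot v$ and invoke primitivity of ${\bf a}(j_0)$ — which is exactly what homogeneity delivers, since any integral $\phi$ with $\phi({\bf a}(j_0))=1$ forces $\gcd({\bf a}(j_0))=1$ — to conclude the increments exhaust $u_{j_0}\Z$. Your route is more elementary (no appeal to perfectness of the pairing, and it does not even use the standing hypothesis that the columns of $A$ generate $\Z^n$), at the modest cost of a longer bookkeeping step. Your endgame also differs in presentation but not in substance: you fix a generic slab $\Theta_0$ and pick the unique representative whose $M$-value falls in the associated window of length $u_{j_0}$, whereas the paper normalizes all representatives so that $0\le\sum_{i\in\sigma}\tilde{k}_ip_{\sigma i}({\bf a}(j_0))<1$ and observes the resulting intervals all overlap. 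Both choices pin down a complete system of representatives making the finitely many inequalities (\ref{CommonSector}) hold on a common nonempty open sector. Your final remark correctly isolates the role of homogeneity: without primitivity of the columns, the progression step would be a proper multiple of $u_{j_0}$ and the selection could fail.
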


\begin{proof}

We set ${\bf 1}_{\s}=\sum_{i\in\s}{\bf e}_i\in\Z^\s$. Then, the equivalence class $[{\bf 1}_{\s}]$ in the group $\Z^\s/\Z {}^tA_\s$ is zero. Indeed, since $A$ is homogeneous, for any element $j\in\bs$, we have ${}^t{\bf 1}_{\s}A_\s^{-1}{\bf a}(j)=|A_\s^{-1}{\bf a}(j)|=1$. This computation combined with the fact that a pairing $\Z^{\s}/\Z{}^tA_\s\times\Z^{n}/\Z A_\s\ni (u,v)\mapsto {}^tuA_\s^{-1}v\in\Q/\Z$ is perfect and that $\Z^{n}/\Z A_\s=\sum_{j\in\bs}\Z[{\bf a}(j)]$ implies that $[{\bf 1}_{\s}]=0$. Therefore, for any $\tilde{\bf k}_\s\in\Z^{n}/\Z{}^tA_\s$ and $r\in\Z$, we have $[\tilde{\bf k}_\s]=[\tilde{\bf k}_\s+r{\bf 1}_{\s}]$. On the other hand, if we replace $\tilde{\bf k}_\s$ by $\tilde{\bf k}_\s+r{\bf 1}_{\s}$, the sector defined by the inequality (\ref{CommonSector}) is translated by $2\pi u_{j_0}\times r$. Thus, we can choose a complete system of representatives as in the statement of the proposition. \change{For example, we can choose $\tilde{\bf k}_\s=(\tilde{k}_i)_{i\in\s}$ so that $0\leq \sum_{i\in\s}\tilde{k}_i p_{\s i}({\bf a}(j_0))<1$.}
\end{proof}

Proposition \ref{prop:CS} gives the information of the argument of the path of analytic continuations. Now, we control other directions $z_{\bar{I}}$ for a corank $1$ configuration in $Q$. The important point is that, if we consider a modification, there can be several corank $1$ configurations in general. So we have to control $z_{\bar{I}}$ simultaneously. Let us identify the space of row vectors $\R^{1\times N}$ with the dual lattice $(\R^N)^\vee$ via dot product. We put 
\begin{equation}
\tilde{C}_{I,+}=\{ \omega\in\R^{1\times N}\mid \omega_{I\setminus \{ j\}}A_{I\setminus \{ j\}}^{-1}{\bf a}(k)<\omega_k,\text{ for any }j\in Z_+\text{ and }k\in\bar{I}\}
\end{equation}
and
\begin{equation}
\tilde{C}_+=\bigcap_{I:\text{corank }1\text{ configurations in }Q}\tilde{C}_{I,+}.
\end{equation}

\begin{prop}\label{prop:Ctilde}
$C_Q\cup C_T\subset\tilde{C}_+.$
\end{prop}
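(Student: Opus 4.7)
The plan is to unwind both definitions into a single linear-algebraic statement and then dispatch the two summands $C_T$ and $C_Q$ of the union separately. First I would observe that, given a corank $1$ cell $I$ appearing in $Q$ and an index $j\in Z_+$, the set $I\setminus\{j\}$ is a simplex of size $n$: the circuit $Z\subset I$ is by definition the unique minimal linearly dependent subset of $I$ and $j\in Z_+\subset Z$, so removing $j$ from $I$ leaves a basis. Consequently $A_{I\setminus\{j\}}$ is invertible and the row vector ${\bf n}_{I,j}(\omega):=\omega_{I\setminus\{j\}}A_{I\setminus\{j\}}^{-1}$ is the \emph{unique} ${\bf n}\in\R^{1\times n}$ satisfying ${\bf n}\cdot{\bf a}(i)=\omega_i$ for every $i\in I\setminus\{j\}$. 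With this identification, membership of $\omega$ in $\tilde{C}_{I,+}$ is exactly the requirement that ${\bf n}_{I,j}(\omega)\cdot{\bf a}(k)<\omega_k$ for all $j\in Z_+$ and $k\in\bar I$.

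For $\omega\in C_T$, the plan is to invoke the combinatorial description of the modification recalled just before the proposition: $T=T_{irr}\cup\bigcup_s T_+(I_s)$ with $T_+(I_s)=\{I_s\setminus\{i\}\}_{i\in Z_+}$. In particular, for any corank $1$ cell $I\in Q$ and any $j\in Z_+$ the simplex $I\setminus\{j\}$ belongs to $T=S(\omega)$. The definition of $S(\omega)$ then supplies some ${\bf n}$ with ${\bf n}\cdot{\bf a}(i)=\omega_i$ on $I\setminus\{j\}$ and ${\bf n}\cdot{\bf a}(\ell)<\omega_\ell$ for every $\ell$ in the complement of $I\setminus\{j\}$. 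Uniqueness forces ${\bf n}={\bf n}_{I,j}(\omega)$, and specializing $\ell$ to $k\in\bar I$ yields the required inequality.

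For $\omega\in C_Q$, the plan is almost identical, but now $I\in Q=S(\omega)$ itself. The definition of $S(\omega)$ supplies some ${\bf n}$ with ${\bf n}\cdot{\bf a}(i)=\omega_i$ for every $i\in I$ and ${\bf n}\cdot{\bf a}(k)<\omega_k$ for every $k\in\bar I$. Restricting the equalities to $i\in I\setminus\{j\}$ again identifies this functional with ${\bf n}_{I,j}(\omega)$, and the strict inequalities transfer verbatim.

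I anticipate no serious obstacle; the only bookkeeping point is that $Z_+$ (rather than $Z_-$) is the side of the circuit matched with $T$ (rather than with $T^\prime$), a convention fixed by the choice of generator $u$ of $L_A$ made at the start of \S\ref{subsec:3.2}. Neither convergence estimates nor any homological input are required: the entire argument reduces to the observation that the piecewise-linear function on $\cone(A)$ determined by $\omega$, when restricted to a maximal simplex $\sigma$, is encoded by the matrix formula $\omega_\sigma A_\sigma^{-1}$.
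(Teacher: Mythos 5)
Your proposal is correct and follows essentially the same route as the paper's proof: in each case you identify $\mathbf{n}$ with $\omega_{I\setminus\{j\}}A_{I\setminus\{j\}}^{-1}$ using the equality constraints from the definition of $S(\omega)$ applied to $I\setminus\{j\}\in T$ (resp.\ $I\in Q$), and then read off the strict inequalities for $k\in\bar I$. The only addition is your preliminary observation that $I\setminus\{j\}$ is indeed a simplex for $j\in Z_+$, which the paper leaves implicit.
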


\begin{proof}
Let us take an element $\omega\in C_T$. Since we have $I\setminus\{ j\}\in T$ for any $j\in Z_+$, there exists a row vector ${\bf n}\in\Q^{1\times n}$ such that
\begin{equation}\label{eqn:314}
\begin{cases}
{\bf n}\cdot {\bf a}(i)=\omega_i& (i\in I\setminus\{ j\})\\
{\bf n}\cdot {\bf a}(k)<\omega_k& (k\in \bar{I}\cup\{ j\}).
\end{cases}
\end{equation}
We regard $\omega_{I\setminus\{ j\}}:=(\omega_i)_{i\in I\setminus\{ j\}}$ as a  row vector. From the first equality of (\ref{eqn:314}), we can derive the equality ${\bf n}=\omega_{I\setminus\{ j\}}A_{I\setminus\{ j\}}^{-1}.$ Substituting this equality to the inequality above, for any $k\in \bar{I}\cup\{ j\}$, we obtain 
\begin{equation}
\omega_{I\setminus \{ j\}}A_{I\setminus \{ j\}}^{-1}{\bf a}(k)<\omega_k.
\end{equation}
This implies the inclusion $C_T\subset \tilde{C}_+$.

Next, we take $\omega\in C_Q$. Since $I\in Q$, there exists a row vector ${\bf n}\in \Q^{1\times n}$ such that
\begin{equation}
\begin{cases}
{\bf n}\cdot {\bf a}(i)=\omega_i& (i\in I)\\
{\bf n}\cdot {\bf a}(k)<\omega_k& (k\in \bar{I}).
\end{cases}
\end{equation}
Again, from the first equality, we can derive the equality ${\bf n}=\omega_{I\setminus\{ j\}}A_{I\setminus\{ j\}}^{-1}$ for any $j\in Z_+$. Therefore, from the second inequality, for any $j\in Z_+$ and $k\in \bar{I}$, we obtain
\begin{equation}
\omega_{I\setminus \{ j\}}A_{I\setminus \{ j\}}^{-1}{\bf a}(k)<\omega_k.
\end{equation}
This implies the inclusion $C_Q\subset \tilde{C}_+$.
\end{proof}

\change{
\begin{rem}
It is easy to see that the cone $\tilde{C}_{I,+}$ is a pull-back of a cone in $L_A^\vee\otimes_{\Z}\R$, i.e., we have an identity $\tilde{C}_{I,+}=\pi_A^{-1}(\pi_A(\tilde{C}_{I,+}))$. Indeed, the kernel of $\pi_A$ is spanned by row vectors of the form $v\cdot A$ for some row vector $v$ of length $n$ and $\tilde{C}_{I,+}$ is invariant under the translation by these vectors.
\end{rem}
}
\change{
\begin{exa}
Let $(w_1,w_2)$ be the linear coordinate of $L_A^\vee\otimes_{\Z}\R$ specified by the choice of a basis of $L_A$ as in Example \ref{exa:313}. We consider three examples of modifications: from $T_1$ to $T_2$, from $T_3$ to $T_2$ and from $T_4$ to $T_1$. We obtain the following table.
\begin{center}
\begin{tabular}{c|c|c|c|c}
modifications&corank 1 configurations&$Z$&$Z_+$&$\pi_A(\tilde{C}_+)$\\
\hline
$T_1\rightarrow T_2$&1234,1235&123&2&$\{ w_2>0\}$\\
\hline
$T_3\rightarrow T_2$&1245&145&1&$\{w_2>2w_1\}$\\
\hline
$T_4\rightarrow T_1$&1345&145&1&$\{w_2<2w_1\}$
\end{tabular}
\end{center}
Note that we have $\pi_A(\tilde{C}_+)=C_{T_1}\cup C_{T_2}$, $\pi_A(\tilde{C}_+)\supsetneq C_{T_2}\cup C_{T_3}$ and $\pi_A(\tilde{C}_+)\subsetneq C_{T_1}\cup C_{T_4}$.
\end{exa}
}

We conclude this section with a lemma on the existence of a good direction.

\begin{prop}
For any corank $1$ configuration $I$ in $Q$, there is a weight vector $\omega^{(I)}=(0_I,\omega_{\bar{I}})$ which belongs to the cone $C_Q$ such that $\omega_{\bar{I}}>0.$ 
\end{prop}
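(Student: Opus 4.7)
The plan is to exploit the gauge symmetry of the assignment $\omega\mapsto S(\omega)$: shifting $\omega$ by any row vector of the form ${\bf n}\cdot A$ (with ${\bf n}\in\R^{1\times n}$) preserves the associated polyhedral subdivision, because if ${\bf m}$ witnesses a cell $\sigma\in S(\omega)$ through the defining (in)equalities ${\bf m}\cdot{\bf a}(i)=\omega_i$ on $\sigma$ and ${\bf m}\cdot{\bf a}(j)<\omega_j$ on $\bar\sigma$, then ${\bf m}-{\bf n}$ is a witness for the same cell with respect to $\omega-{\bf n}\cdot A$, and conversely. This is exactly the remark in the excerpt that $C_Q=\pi_A^{-1}(\pi_A(C_Q))$, since the kernel of $\pi_A$ is spanned by the rows of $A$.

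Equipped with this, my plan is as follows. First, pick any $\omega^0\in C_Q$; this is possible since $Q$ is a regular polyhedral subdivision, so $C_Q$ is non-empty. Because $I$ is a cell of $Q=S(\omega^0)$, the definition of $S(\omega^0)$ directly supplies a row vector ${\bf n}_I\in\R^{1\times n}$ such that
\[
{\bf n}_I\cdot{\bf a}(i)=\omega^0_i\ \ (i\in I),\qquad {\bf n}_I\cdot{\bf a}(j)<\omega^0_j\ \ (j\in\bar I).
\]
I would then set $\omega^{(I)}:=\omega^0-{\bf n}_I\cdot A$. By construction $\omega^{(I)}_i=0$ for $i\in I$ and $\omega^{(I)}_j>0$ for $j\in\bar I$, so it has the required shape $(0_I,\omega_{\bar I})$ with $\omega_{\bar I}>0$. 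The gauge invariance above shows $\omega^{(I)}\in C_Q$, concluding the proof.

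I do not foresee a real obstacle: the argument is formal and relies only on the defining (in)equalities of $S(\omega)$ and the non-emptiness of $C_Q$. The one thing one should state explicitly is the gauge invariance $S(\omega-{\bf n}\cdot A)=S(\omega)$, but this is immediate from the substitution ${\bf m}\mapsto{\bf m}-{\bf n}$ in the defining conditions applied cell by cell; no use of the specific structure of $Q$ (almost triangulation, circuit, etc.) is needed for this particular statement.
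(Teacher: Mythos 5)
Your proof is correct and follows essentially the same route as the paper: pick $\omega\in C_Q$, use the defining inequalities of $I\in S(\omega)$ to get ${\bf n}$, and set $\omega^{(I)}=\omega-{\bf n}A$. The only difference is that you explicitly spell out the gauge invariance $S(\omega-{\bf n}A)=S(\omega)$ to justify $\omega^{(I)}\in C_Q$, which the paper leaves implicit.
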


\begin{proof}
We take a weight vector $\omega$ in the relative interior of the cone $C_Q.$ Since $I$ belongs to $Q$, there is a row vector ${\bf n}\in\Q^{1\times n}$ such that
\begin{equation}
\begin{cases}
{\bf n}\cdot {\bf a}(i)=\omega_i& (i\in I)\\
{\bf n}\cdot {\bf a}(k)<\omega_k& (k\in \bar{I}).
\end{cases}
\end{equation}
If we put $\omega^{(I)}=\omega-{\bf n}A$, this satisfies the desired properties.
\end{proof}

\begin{cor}\label{cor:3.5}
There exists a weight vector $\omega_Q$ in the cone $C_Q$ such that $\omega_Q=(0_{\text{core}(Q)},\omega_{\overline{\text{core}(Q)}})$ and $\omega_{\overline{\text{core}(Q)}}>0$. Here, $\text{core}(Q)=\displaystyle\bigcap_{I:\text{corank }1\text{ configurations in }Q}I\subset\{ 1,\dots,N\}$ and $\overline{\text{core}(Q)}=\{ 1,\dots,N\}\setminus \text{core}(Q)$.
\end{cor}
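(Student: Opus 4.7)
The plan is to construct $\omega_Q$ as a sum of the weight vectors $\omega^{(I)}$ supplied by the previous proposition, one for each corank $1$ configuration $I$ in $Q$, and to verify that this sum has the desired vanishing/positivity pattern.

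Concretely, I set
\begin{equation}
\omega_Q:=\sum_{I:\text{ corank }1\text{ configuration in }Q}\omega^{(I)},
\end{equation}
where each $\omega^{(I)}=(0_I,\omega^{(I)}_{\bar I})$ is the vector furnished by the preceding proposition, in particular with $\omega^{(I)}_{\bar I}>0$ componentwise. Since $C_Q$ is a convex cone containing each summand, the sum $\omega_Q$ lies in $C_Q$.

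It remains to analyze the coordinates of $\omega_Q$. For any index $j\in\{1,\dots,N\}$, the $j$-th entry equals $\sum_I(\omega^{(I)})_j$, where $(\omega^{(I)})_j=0$ if $j\in I$ and $(\omega^{(I)})_j>0$ if $j\notin I$. If $j\in\text{core}(Q)=\bigcap_I I$, then every summand vanishes, so $(\omega_Q)_j=0$. Conversely, if $j\notin\text{core}(Q)$, then by definition there is at least one corank $1$ configuration $I_0$ in $Q$ with $j\notin I_0$, which contributes $(\omega^{(I_0)})_j>0$ while every other summand is nonnegative; hence $(\omega_Q)_j>0$. This gives exactly the required form $\omega_Q=(0_{\text{core}(Q)},\omega_{\overline{\text{core}(Q)}})$ with $\omega_{\overline{\text{core}(Q)}}>0$.

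I do not anticipate a real obstacle here: the main point is recognizing that the convexity of $C_Q$ lets us simply add the previously constructed weight vectors, and the combinatorial definition of $\text{core}(Q)$ is precisely tailored to make the vanishing locus of the sum equal to $\text{core}(Q)$.
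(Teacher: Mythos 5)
Your proof is correct and fills in exactly the argument the paper leaves implicit: the corollary is stated without proof, and summing the vectors $\omega^{(I)}$ from the preceding proposition, using convexity of $C_Q$ and the observation that the zero set of the sum is the common intersection $\bigcap_I I=\text{core}(Q)$, is the intended derivation.
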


\noindent
\change{Note that we have an inclusion $Z\subset{\rm core}(Q)$.}

\subsection{Estimate of a difference operator of infinite order}\label{subsec:3.3}

In this subsection, we consider a hypergeometric function of the following type:
\begin{equation}
F\left(\substack{{\bf a},{\bf b}\\ \tilde{\bf a},\tilde{\bf b}};z\right)=\sum_{n=0}^\infty
\frac{
\Gamma({\bf a}+{\bf b}n)
}
{
\Gamma(\tilde{\bf a}+\tilde{\bf b}n)
}z^n.\label{eqn:3.18}
\end{equation}
Here, parameters are ${\bf a}\in\C^p$, $\tilde{\bf a}\in\C^q$, ${\bf b}\in\Z^p_{\geq 0}$, $\tilde{\bf b}\in\Z^q_{\geq 0}$ and we assume that for any $n\in\Z_{\geq 0}$, ${\bf a}+{\bf b}n$ has no non-positive integer entry. Note that if ${\bf  b}\in\Q_{\geq 0}^p$ or $\tilde{\bf b}\in\Q^q_{\geq 0}$, there is a positive integer $k$ so that $k{\bf b}\in\Z^p_{\geq 0},k\tilde{\bf b}\in\Z^q_{\geq 0}$. Therefore, we have a decomposition
\begin{equation}
F\left(\substack{{\bf a},{\bf b}\\ \tilde{\bf a},\tilde{\bf b}};z\right)=\sum_{i=0}^{k-1}z^iF\left(\substack{{\bf a}+i{\bf b},k{\bf b}\\ \tilde{\bf a}+i\tilde{\bf b},k\tilde{\bf b}};z^k\right).
\end{equation}
Thus, the consideration is reduced to the case when ${\bf b}\in\Z_{\geq 0}^p$ and $\tilde{\bf b}\in\Z^q_{\geq 0}$. Now we consider the following operator which is sometimes referred to as Erd\'elyi-Kober integral:
\begin{equation}
(I_0^{(\alpha,\beta),\kappa}f)(z)=\frac{1}{\Gamma(\beta)}\int_0^1t^{\alpha-1}(1-t)^{\beta-1}f(t^{\kappa}z)dt\quad\quad\left( \re\alpha,\re\beta>0,\kappa\in\R_{\geq 0}\right).
\end{equation}
Here, $f(z)$ is a germ of a univariate holomorphic function defined around the origin. In terms of power series, this operator is well-understood. Indeed, if we write $f(z)=\sum_{n=0}^\infty a_nz^n$, we have
\begin{align}
(I_0^{(\alpha,\beta),\kappa}f)(z)&=\frac{1}{\Gamma(\beta)}\sum_{n=0}^\infty a_n\left(\int_0^1t^{\alpha+\kappa n-1}(1-t)^{\beta-1}dt\right)z^n\nonumber\\
&=\sum_{n=0}^\infty a_n\frac{\Gamma(\alpha+\kappa n)}{\Gamma(\alpha+\beta+\kappa n)}z^n\label{eqn:Shift}
\end{align}
From the formula (\ref{eqn:Shift}), shifting the complex parameters ${\bf a}, \tilde{\bf a}$ in (\ref{eqn:3.18}) can be expressed in terms of the integral operator $I_0^{(\alpha,\beta),\kappa}$. We want to generalize this operator $I_0^{(\alpha,\beta),\kappa}$ to $\alpha\in\C\setminus\Z$, $\beta\in\C$, when $\kappa\in\Z_{\geq 0}$.

\noindent
\underline{case 1: $\alpha\in\C\setminus\Z$ and $\beta\in\C\setminus\Z$}

In this case, we have
\begin{equation}
(I_0^{(\alpha,\beta),\kappa}f)(z)=\frac{1}{\Gamma(\beta)(1-e^{-2\pi\ii\alpha})(1-e^{-2\pi\ii\beta})}\int_Pt^{\alpha-1}(1-t)^{\beta-1}f(t^{\kappa}z)dt,
\end{equation}
where $P$ is the Pochhammer cycle connecting $t=0$ and $t=1$ (\cite[CHAP.X, 12$\cdot$43]{WW}).

\noindent
\underline{case 2: $\alpha\in\C\setminus\Z$ and $\re\beta>0$}

In this case, we have
\begin{equation}
(I_0^{(\alpha,\beta),\kappa}f)(z)=\frac{1}{\Gamma(\beta)(1-e^{-2\pi\ii\alpha})}\int_Qt^{\alpha-1}(1-t)^{\beta-1}f(t^{\kappa}z)dt,
\end{equation}
where $Q$ is the cycle which starts from and ends at 1 as in \change{Figure \ref{fig:4}}.

\begin{figure}[h]
\center
\begin{tikzpicture}
\draw (0,0) node[below right]{O};
\node at (0,0) {$\cdot$};
\node at (2,0) {$\cdot$};
\node at (2.3,-0.3) {$1$};
\draw[->-=.5,domain=90:270] plot ({cos(\x)}, {sin(\x)});
\draw[ ->-=.6] (2,0)--(0,1);
\draw[ ->-=.4] (0,-1)--(2,0);
\node at (1,-0.5) {$\bullet$};
\node at (1.5,-0.8) {$\arg t=0$};
\end{tikzpicture}
\caption{cycle $Q$}
\label{fig:4}
\end{figure}

\noindent
\underline{case 3: $\alpha\in\C\setminus\Z$ and $\beta=-s$ ($s\in\Z_{\geq 0}$)}

In this case, we first take $\beta$ to be a generic complex number and $\re\alpha>0$
\begin{equation}
(I_0^{(\alpha,\beta),\kappa}f)(z)=\frac{1}{\Gamma(\beta)(1-e^{-2\pi\ii\beta})}\int_Rt^{\alpha-1}(1-t)^{\beta-1}f(t^{\kappa}z)dt,
\end{equation}
where $R$ is the cycle which starts from and ends at $0$ as in \change{Figure \ref{fig:5}}.

\begin{figure}[h]
\center
\begin{tikzpicture}
\draw (0,0) node[below right]{O};
\node at (0,0) {$\cdot$};
\node at (2,0) {$\cdot$};
\node at (2.3,-0.3) {$1$};
\draw[->-=.5,domain=90:-90] plot ({2+cos(\x)}, {sin(\x)});
\draw[ ->-=.6] (0,0)--(2,1);
\draw[ ->-=.6] (2,-1)--(0,0);
\node at (1,0.5) {$\bullet$};
\node at (0.8,0.8) {$\arg t=0$};
\end{tikzpicture}
\caption{cycle $R$}
\label{fig:5}
\end{figure}

\noindent
If we let $\beta$ tend to $-s$, we obtain
\begin{equation}
(I_0^{(\alpha,-s),\kappa}f)(z)=\frac{(-1)^{s+1}s!}{2\pi\ii}\oint_{\partial\Delta(1;\varepsilon)}\frac{t^{\alpha-1}f(t^{\kappa}z)}{(1-t)^{s+1}}dt.
\end{equation}
Here, $\Delta(1;\varepsilon)$ denotes a disk around $1$ with radius $0<\ve<1$. The formula above is valid even when $\alpha\in\C\setminus\Z$.

Now we are in a position to apply the integral representation of the difference operator to the key Gevrey estimate. For any $0<\varepsilon<R, 0<\varepsilon_\theta$, we put $S_{\varepsilon,\varepsilon_\theta,R}=\overline{\Delta(0;\varepsilon)}\cup\{ |\arg z-\theta|\leq\varepsilon_\theta,|z|\leq R\}.$

\begin{lem}\label{Lemma1}
Let $a\in\C$ and a finite subset $\{ h_s\}_s\subset\Q$ satisfy $a+\sum_{s}l_sh_s\notin\Z$ for any non-negative integers $l_s$ and let $\kappa\in\Z_{\geq 0}$. Then, for any $0<\varepsilon^\prime<\varepsilon$, $0<\varepsilon_\theta^\prime<\varepsilon_\theta$, $0<R^\prime<R$, there exist constants $0<C$ and $0<r_s$ which only depend on $a,h_s,\kappa,\varepsilon^\prime,\varepsilon^\prime_\theta,R^\prime$ such that for any holomorphic function $f$ in a neighbourhood of $S_{\varepsilon, \varepsilon_\theta,R}$, the inequality
\begin{equation}
|(I_0^{(a+\sum_sl_sh_s,-\sum_sl_sh_s),\kappa}f)(z)|\leq C\prod_sr_s^{l_s}|h_sl_s|^{l_sh_s}\sup\{ |f(z)|\mid z\in S_{\varepsilon,\varepsilon_\theta,R}\}
\end{equation}
holds for any $z\in S_{\varepsilon^\prime,\varepsilon_\theta^\prime,R^\prime}$ and $l_s\in\Z_{\geq 0}$.
\end{lem}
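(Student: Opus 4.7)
The plan is to represent $I_0^{(\alpha,\beta),\kappa}$ with $\alpha = a + \sum_s l_s h_s$ and $\beta = -\sum_s l_s h_s$ by one of the three contour-integral formulae recalled in the text (the Pochhammer cycle $P$, the keyhole cycle $Q$ when $\re\beta>0$, or the cycle $R$ when $\beta\in\Z_{\leq 0}$), bound the integrand pointwise on the contour, apply Stirling to the resulting prefactor, and finally repackage the Stirling-type factor $|\sum_s l_s h_s|^{|\sum_s l_s h_s|}$ into the product form $\prod_s |l_s h_s|^{l_s h_s}$ by a convex-analytic inequality. The hypotheses $a + \sum_s l_s h_s \notin \Z$ and $h_s \in \Q$ are used together to guarantee that $\{\alpha \bmod \Z\}$ takes values in a finite subset of $\R/\Z\setminus\{0\}$; this makes the factors $|1 - e^{-2\pi\ii\alpha}|^{-1}$ (and $|1 - e^{-2\pi\ii\beta}|^{-1}$ whenever $\beta\notin\Z$) uniformly bounded in the $l_s$.

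In each case I would choose a contour $C$ contained in a fixed compact set $K \subset \C\setminus\{0,1\}$ independent of the $l_s$. Since $\kappa\in\Z_{\geq 0}$ and $K$ is bounded, $\{t^\kappa: t\in K\}$ is bounded in $\C$, so by taking $\varepsilon',\varepsilon_\theta',R'$ strictly smaller than $\varepsilon,\varepsilon_\theta,R$ one arranges $t^\kappa z \in S_{\varepsilon,\varepsilon_\theta,R}$ for every $z\in S_{\varepsilon',\varepsilon_\theta',R'}$ and $t\in K$. On $K$, $|t|$ and $|1-t|$ are bounded between positive constants, so $|t^{\alpha-1}(1-t)^{\beta-1}| \leq C_0^{1 + |\alpha| + |\beta|} \leq (C_0')^{\sum_s l_s}$, exponential in the $l_s$ and hence absorbable into $\prod_s r_s^{l_s}$. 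Stirling and the reflection formula $\Gamma(\beta)\Gamma(1-\beta) = \pi/\sin(\pi\beta)$ then give a uniform bound on the remaining prefactor,
\begin{equation*}
|\text{prefactor}| \leq C_1^{\,|\beta|+1} \Bigl|\sum_s l_s h_s\Bigr|^{|\sum_s l_s h_s|},
\end{equation*}
with the convention $0^0 = 1$; combined with the contour estimate this yields
\begin{equation*}
|(I_0^{(\alpha,\beta),\kappa}f)(z)| \leq C_2 \prod_s \rho_s^{l_s} \cdot \Bigl|\sum_s l_s h_s\Bigr|^{|\sum_s l_s h_s|} \sup_{S_{\varepsilon,\varepsilon_\theta,R}}|f|.
\end{equation*}

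The main obstacle is to bound the single factor $|\sum_s l_s h_s|^{|\sum_s l_s h_s|}$ by $\prod_s |l_s h_s|^{l_s h_s}$ up to an extra exponential-in-$l_s$ factor. Split indices into $S_\pm = \{s : \pm h_s > 0\}$ and set $X_\pm = \sum_{S_\pm}l_s|h_s|$. In the case $X_+ \geq X_-$, I would combine three ingredients: (i) the elementary inequality $(X_+ - X_-)\log(X_+ - X_-) + X_-\log X_- \leq X_+\log X_+$, obtained by analyzing the function $X_- \mapsto (X_+-X_-)\log(X_+-X_-) + X_-\log X_-$ on $[0, X_+]$ (it is convex, so attains its maximum at the endpoints, where its value equals $X_+\log X_+$); (ii) the entropy bound $\sum_{S_+}l_s h_s \log(l_s h_s) \geq X_+\log(X_+/N_+)$, where $N_+ = |S_+|$; and (iii) the crude bound $\sum_{S_-}l_s|h_s|\log(l_s|h_s|) \leq X_-\log X_-$. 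After taking logarithms and summing, these combine to give
\begin{equation*}
\Bigl|\sum_s l_s h_s\Bigr|^{|\sum_s l_s h_s|} \leq N_+^{X_+} \prod_s |l_s h_s|^{l_s h_s},
\end{equation*}
and $N_+^{X_+} = \prod_{S_+}(N_+^{h_s})^{l_s}$ absorbs into $\prod_s r_s^{l_s}$. The case $X_+ < X_-$ is symmetric with the roles of $S_+$ and $S_-$ exchanged.

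A secondary technical point is that as the $l_s$ vary, $\beta$ may cross between the regimes of cases $P$, $Q$, $R$; one must check that the contour estimates transition continuously, which follows from the finiteness of $\{\alpha \bmod \Z\}$ and the rationality of the $h_s$. The hardest part of the argument is the algebraic inequality (i) of the previous paragraph: the entropy bound (ii) and the crude bound (iii) are standard, but it is the interplay of all three, combined with Stirling, that yields precisely the product form claimed in the lemma rather than a weaker single-Gamma estimate.
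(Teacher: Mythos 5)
Your proposal follows the same basic strategy as the paper's proof: represent $I_0^{(\alpha,\beta),\kappa}$ by one of the three contour integrals ($P$, $Q$, or $\partial\Delta(1;\tilde\varepsilon)$), choose the contour in a fixed compact set $K$ independent of the $l_s$ so that $t^\kappa z\in S_{\varepsilon,\varepsilon_\theta,R}$, bound the integrand by an expression exponential in $\sum_s l_s$, and then use Stirling to control the Gamma-type prefactor. Where you genuinely add value is in the final algebraic step. The paper's proof is quite terse here: after writing out the three formulae, it remarks only that ``the desired estimate exists for the first and the third case since $t^{\sum_sl_sh_s}(1-t)^{-\sum_sl_sh_s}=\prod_s(t^{h_s}(1-t)^{-h_s})^{l_s}$'', which controls the integrand but says nothing about converting the single factor $|\Gamma(1+\sum_s l_s h_s)|\asymp|\sum_sl_sh_s|^{\sum_sl_sh_s}$ (up to exponentials) into the product form $\prod_s|l_sh_s|^{l_sh_s}$ required by the statement. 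Your inequalities (i)--(iii), with the convexity and Jensen-type bounds for $t\mapsto t\log t$, supply precisely this missing step and make the argument complete.

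One small inaccuracy: the $X_+<X_-$ case is not quite the literal mirror image. When $M_->M_+$, your inequality (i) gives $(M_--M_+)\log(M_--M_+)+M_+\log M_+\le M_-\log M_-$, but what the chain actually needs after applying (iii) on $S_-$ and (ii) on $S_+$ is the reverse inequality $M_-\log M_-\le(M_--M_+)\log(M_--M_+)+M_+\log M_++O(\sum_sl_s)$. Fortunately the gap between the two sides is the binary entropy $M_-H(M_+/M_-)\le M_-\log 2$, which is $O(\sum_sl_s)$ and hence absorbable into $\prod_s r_s^{l_s}$. So your conclusion stands, but a careful write-up should invoke the entropy bound $0\le X_+\log X_+-(X_+-X_-)\log(X_+-X_-)-X_-\log X_-\le X_+\log 2$ rather than appeal to symmetry of (i) alone. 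The observation that both the keyhole and Pochhammer prefactors $(1-e^{-2\pi\ii\alpha})^{-1}$ stay uniformly bounded because $h_s\in\Q$ forces $\{\alpha\bmod\Z\}$ into a finite set bounded away from $0$ is also worth stating; you do note it, and the paper relies on it tacitly.
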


\begin{proof}
By homotopy, we can choose integration contours $P,Q,\partial\Delta(1;\tilde{\varepsilon})$ so that for any $t$ in one of these contours and for any $z\in S_{\varepsilon^\prime,\varepsilon_\theta^\prime,R^\prime}$, we have $t^\kappa z\in S_{\varepsilon,\varepsilon_\theta,R}$. By the assumption, we have $a+\sum_sl_sh_s\notin\Z$. Therefore, we have
\begin{align}
&(I_0^{(a+\sum_sl_sh_s,-\sum_sl_sh_s),\kappa}f)(z)\\
=&
\begin{cases}
\frac{e^{-\sum_s\pi\ii l_sh_s}\Gamma(1+\sum_sl_sh_s)}{2\pi\ii(1-e^{-2\pi\ii(a+\sum_sl_sh_s)})}\int_Pt^{a+\sum_sl_sh_s-1}(1-t)^{-\sum_sl_sh_s-1}f(t^\kappa z)dt &(\sum_sl_sh_s\notin\Z)\\
\frac{1}{\Gamma(-\sum_sl_sh_s)(1-e^{-2\pi\ii(a+\sum_sl_sh_s)})}\int_Qt^{a+\sum_sl_sh_s-1}(1-t)^{-\sum_sl_sh_s-1}f(t^\kappa z)dt&(-\sum_sl_sh_s\in\Z_{>0})\\
\frac{(-1)^{\sum_sl_sh_s-1}(\sum_sl_sh_s)!}{2\pi\ii}\oint_{\partial\Delta(1;\tilde{\varepsilon})}\frac{t^{a+\sum_sl_sh_s-1}f(t^\kappa z)}{(1-t)^{\sum_sl_sh_s+1}}dt&(-\sum_sl_sh_s\in\Z_{\leq 0}).
\end{cases}
\end{align}
Since each $h_s$ is rational, we see that the desired estimate exists for the first and the third case since $t^{\sum_sl_sh_s}(1-t)^{-\sum_sl_sh_s}=\prod_s(t^{h_s}(1-t)^{-h_s})^{l_s}$. As for the second case, if we put $r=\sup\{|1-t|\mid t\in Q\}$ and $r^\prime=\inf\{ |t|\mid t\in Q\}$,  we have, for any small positive number $\delta,$ an estimate 
\begin{align}
&\frac{1}{\Gamma(-\sum_sl_sh_s)(1-e^{-2\pi\ii(a+\sum_sl_sh_s)})}\int_Qt^{a+\sum_sl_sh_s-1}(1-t)^{-\sum_sl_sh_s-1}f(t^\kappa z)dt\\
\leq&\frac{C}{\Gamma(-\sum_sl_sh_s)}{r^\prime}^{\sum_sl_sh_s}r^{-\sum_sl_sh_s-\delta}\sup\{ |f(z)|\mid z\in S_{\varepsilon,\varepsilon_\theta,R}\}
\end{align}
\end{proof}

In the same way, one can prove a
\begin{lem}\label{Lemma2}
Let $\tilde{a}\in\C\setminus\Z$, $\{\tilde{h}_s\}\subset\Q$ be arbitrary and $\tilde{\kappa}\in\Z_{\geq 0}$. Then, for any $0<\varepsilon^\prime<\varepsilon$, $0<\varepsilon_\theta^\prime<\varepsilon_\theta$, $0<R^\prime<R$, there exist constants $0<\tilde{C}$ and $0<\tilde{r}$ which only depend on $\tilde{a},\tilde{h}_s,\tilde{\kappa},\varepsilon^\prime,\varepsilon^\prime_\theta,R^\prime$ such that for any holomorphic function $f$ in a neighbourhood of $S_{\varepsilon, \varepsilon_\theta,R}$, the inequality
\begin{equation}
|(I_0^{(\tilde{a},\sum_sl_s\tilde{h}_s),\tilde{\kappa}}f)(z)|\leq \tilde{C}\prod_s\tilde{r}^{l_s}|\tilde{h}_sl_s|^{-\tilde{h}_sl_s}\sup\{ |f(z)|\mid z\in S_{\varepsilon,\varepsilon_\theta,R}\}
\end{equation}
holds for any $z\in S_{\varepsilon^\prime,\varepsilon_\theta^\prime,R^\prime}$ and any $l_s\in\Z_{\geq 0}$.
\end{lem}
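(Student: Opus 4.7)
My plan is to mimic the proof of Lemma \ref{Lemma1} with the roles of the two Gamma factors exchanged. Set $\beta := \sum_{s}l_{s}\tilde{h}_{s}$. Since $\tilde{a}\notin\Z$, the three cases $\beta\notin\Z$, $\beta\in\Z_{>0}$ and $\beta\in\Z_{\leq 0}$ cover all possibilities, and in each case the Erd\'elyi--Kober operator $I_{0}^{(\tilde{a},\beta),\tilde{\kappa}}f$ is represented respectively by the integrals
\[
\frac{1}{\Gamma(\beta)(1-e^{-2\pi\ii\tilde{a}})(1-e^{-2\pi\ii\beta})}\int_{P},\qquad \frac{1}{\Gamma(\beta)(1-e^{-2\pi\ii\tilde{a}})}\int_{Q},\qquad \frac{1}{\Gamma(\beta)(1-e^{-2\pi\ii\beta})}\int_{R},
\]
where in the third case we must first take $\mathrm{Re}\,\beta\ll 0$ and then continue. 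As in the proof of Lemma \ref{Lemma1}, the contours $P,Q,R$ may be chosen once and for all (depending only on $\tilde{\kappa},\varepsilon^{\prime},\varepsilon_{\theta}^{\prime},R^{\prime}$) so that $t^{\tilde{\kappa}}z\in S_{\varepsilon,\varepsilon_{\theta},R}$ for every $t$ on the contour and every $z\in S_{\varepsilon^{\prime},\varepsilon_{\theta}^{\prime},R^{\prime}}$. Thus $|f(t^{\tilde{\kappa}}z)|$ is uniformly dominated by $\sup_{S_{\varepsilon,\varepsilon_{\theta},R}}|f|$, and it remains to estimate the prefactor and the factor $(1-t)^{\beta-1}$.

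On any of the three contours the function $1-t$ stays in a fixed compact subset of $\C^{\ast}$, so there are constants $\rho,\rho^{\prime}>0$ (independent of $l_{s}$) with $\rho^{\prime}\leq |1-t|\leq \rho$. Factoring
\[
(1-t)^{\beta-1}=(1-t)^{-1}\prod_{s}\bigl((1-t)^{\tilde{h}_{s}}\bigr)^{l_{s}},
\]
one gets a uniform bound $|(1-t)^{\beta-1}|\leq \rho^{\prime -1}\prod_{s}\tilde{r}_{0}^{l_{s}}$ with $\tilde{r}_{0}:=\max(\rho^{\tilde{h}_{s}},\rho^{\prime \tilde{h}_{s}})$, absorbing the possibly negative exponents into the maximum. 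The only remaining ingredient is the estimate of the Gamma prefactor. Here Stirling's formula gives, uniformly for $\beta$ in the relevant strip,
\[
\frac{1}{|\Gamma(\beta)|}\leq C_{1}\,e^{\,C_{2}|\beta|}\,|\beta|^{-\beta}\qquad (\beta\to+\infty),
\]
and similarly $|\Gamma(1-\beta)|\leq C_{1}^{\prime}e^{C_{2}^{\prime}|\beta|}|\beta|^{-\beta}$ when $\beta\to-\infty$, which governs the third case through the reflection formula $\Gamma(\beta)^{-1}=\sin(\pi\beta)\Gamma(1-\beta)/\pi$.

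To finish, I would translate $|\beta|^{-\beta}$ into the product form $\prod_{s}|\tilde{h}_{s}l_{s}|^{-\tilde{h}_{s}l_{s}}$. Splitting $\{s\}$ into $S_{+}=\{s\mid \tilde{h}_{s}\geq 0\}$ and $S_{-}=\{s\mid \tilde{h}_{s}<0\}$, one applies the elementary log-sum inequality $\sum a_{s}\log a_{s}\leq(\sum a_{s})\log(\sum a_{s})$ to the positive parts $a_{s}=\tilde{h}_{s}l_{s}$ on $S_{+}$, and an analogous inequality to the negatives, obtaining
\[
|\beta|^{-\beta}\leq e^{C_{3}\sum_{s}l_{s}}\prod_{s}|\tilde{h}_{s}l_{s}|^{-\tilde{h}_{s}l_{s}},
\]
after which the exponential $e^{C_{3}\sum_{s}l_{s}}$ is absorbed into an enlarged $\tilde{r}:=\tilde{r}_{0}\cdot e^{C_{2}+C_{3}}$. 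Collecting the uniform constants from all three cases and taking $\tilde{C}$ to be their maximum yields the asserted bound. The main obstacle I anticipate is the bookkeeping in the Stirling/log-sum step, especially the uniformity of $1/|\Gamma(\beta)|\leq C|\beta|^{-\beta}e^{C|\beta|}$ in the third case where $\beta$ approaches negative integers (one must keep the sine factor away from its zeros by choosing the radius $\tilde{\varepsilon}$ appropriately), and handling the mixed-sign situation in the log-sum inequality cleanly.
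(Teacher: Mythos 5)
Your proof takes exactly the paper's route; the paper's own argument for Lemma \ref{Lemma2} is essentially a one-liner that writes down the same three-case integral representation and defers to Lemma \ref{Lemma1}, so the Stirling and log-sum step you make explicit (converting $|\beta|^{-\beta}$ into $\prod_s|\tilde{h}_sl_s|^{-\tilde{h}_sl_s}$ up to a geometric factor) is precisely the content left implicit there, and your mixed-sign bookkeeping does go through after writing $\beta=A-B$ with $A=\sum_{\tilde{h}_s\geq 0}\tilde{h}_sl_s$, $B=-\sum_{\tilde{h}_s<0}\tilde{h}_sl_s$. Two minor slips worth noting: in case $3$ the paper's derivation first takes $\mathrm{Re}\,\alpha>0$ (not $\mathrm{Re}\,\beta\ll 0$) with generic $\beta$ before letting $\beta$ tend to a nonpositive integer, and the constant you call $\tilde{r}_0$ should be a maximum over $s$.
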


\begin{proof}
In this case, we have
\begin{equation}
(I_0^{(\tilde{a},\sum_sl_s\tilde{h}_s),\tilde{\kappa}}f)(z)=
\begin{cases}
\frac{e^{-\pi\ii \sum_sl_s\tilde{h}_s}\Gamma(1-\sum_sl_s\tilde{h}_s)}{2\pi\ii(1-e^{-2\pi\ii\tilde{a}})}\int_Pt^{\tilde{a}-1}(1-t)^{\sum_sl_s\tilde{h}_s-1}f(t^{\tilde{\kappa}} z)dt &(\sum_sl_s\tilde{h}_s\notin\Z)\\
\frac{1}{\Gamma(\sum_sl_s\tilde{h}_s)(1-e^{-2\pi\ii\tilde{a}})}\int_Qt^{\tilde{a}-1}(1-t)^{\sum_sl_s\tilde{h}_s-1}f(t^{\tilde{\kappa}} z)dt&(\sum_sl_s\tilde{h}_s\in\Z_{>0})\\
\frac{(-1)^{\sum_sl_s\tilde{h}_s-1}(\sum_sl_s\tilde{h}_s)!}{2\pi\ii}\oint_{\partial\Delta(1;\tilde{\varepsilon})}\frac{t^{\tilde{a}-1}f(t^{\tilde{\kappa}} z)}{(1-t)^{\sum_sl_s\tilde{h}_s+1}}dt&(\sum_sl_s\tilde{h}_s\in\Z_{\leq 0}).
\end{cases}
\end{equation}
The rest of the proof is similar to that of Lemma \ref{Lemma1}.
\end{proof}

By applying the lemmata above repeatedly, we obtain the desired

\begin{thm}\label{GevreyEstimate}
Let $\{{\bf h}_s\}_s\subset\Q^p$, $\{\tilde{\bf h}_s\}_s\subset\Q^q$ be finite subsets such that for any $l_s,n\in\Z_{\geq 0}$, ${\bf a}+\sum_sl_s{\bf h}_s+{\bf b}n$ and $\tilde{\bf a}$ does not have any integer entry. If $F\left(\substack{{\bf a},{\bf b}\\ \tilde{\bf a},\tilde{\bf b}};z\right)$ is holomorphic in a neighbourhood of $S_{\varepsilon,\varepsilon_\theta,R}$. Then, for any $0<\varepsilon^\prime<\varepsilon$, $0<\varepsilon_\theta^\prime<\varepsilon_\theta$, $0<R^\prime<R$, there exist constants $0<C$ and $0<r_s$ which only depend on ${\bf a},\tilde{\bf a},{\bf h}_s,\tilde{\bf h}_s,\varepsilon^\prime,\varepsilon^\prime_\theta,R^\prime$ such that
\begin{equation}
|F\left(\substack{{\bf a}+\sum_sl_s{\bf h}_s,{\bf b}\\ \tilde{\bf a}+\sum_sl_s\tilde{\bf h}_s,\tilde{\bf b}};z\right)|\leq C\prod_sr_s^{l_s}(||{\bf h}_s|-|\tilde{\bf h}_s||\cdot l_s)^{(|{\bf h}_s|-|\tilde{\bf h}_s|)l_s}\sup\{ |F\left(\substack{{\bf a},{\bf b}\\ \tilde{\bf a},\tilde{\bf b}};z\right)|\mid z\in S_{\varepsilon,\varepsilon_\theta,R}\}
\end{equation}
holds for any $z\in S_{\varepsilon^\prime,\varepsilon_\theta^\prime,R^\prime}$ and any $l_s\in\Z_{\geq 0}$. Here, we set $0^0:=1$.
\end{thm}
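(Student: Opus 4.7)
The plan is to realize the shift ${\bf a} \mapsto {\bf a}+\sum_s l_s{\bf h}_s$, $\tilde{\bf a}\mapsto \tilde{\bf a}+\sum_s l_s\tilde{\bf h}_s$ as a composition of Erd\'elyi--Kober operators and to apply Lemmas \ref{Lemma1} and \ref{Lemma2} entry by entry. Write ${\bf a}=(a_1,\dots,a_p)$, ${\bf b}=(b_1,\dots,b_p)$, $\tilde{\bf a}=(\tilde a_1,\dots,\tilde a_q)$, $\tilde{\bf b}=(\tilde b_1,\dots,\tilde b_q)$ and let $h_{s,i}$, $\tilde h_{s,j}$ denote the components of ${\bf h}_s$, $\tilde{\bf h}_s$. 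The coefficient formula (\ref{eqn:Shift}) shows that the operator $I_0^{(a_i+\sum_s l_sh_{s,i},\,-\sum_s l_sh_{s,i}),\,b_i}$ multiplies the $n$-th Taylor coefficient of a power series by $\Gamma(a_i+\sum_s l_sh_{s,i}+b_in)/\Gamma(a_i+b_in)$, and $I_0^{(\tilde a_j,\,\sum_s l_s\tilde h_{s,j}),\,\tilde b_j}$ multiplies it by $\Gamma(\tilde a_j+\tilde b_jn)/\Gamma(\tilde a_j+\sum_s l_s\tilde h_{s,j}+\tilde b_jn)$. Composing these $p+q$ operators transforms $F\!\left(\substack{{\bf a},{\bf b}\\ \tilde{\bf a},\tilde{\bf b}};z\right)$ into $F\!\left(\substack{{\bf a}+\sum_s l_s{\bf h}_s,{\bf b}\\ \tilde{\bf a}+\sum_s l_s\tilde{\bf h}_s,\tilde{\bf b}};z\right)$.

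For the quantitative bound, I would fix a chain of nested sectors $S_{\varepsilon,\varepsilon_\theta,R}=S^{(0)}\supset S^{(1)}\supset\cdots\supset S^{(p+q)}=S_{\varepsilon',\varepsilon_\theta',R'}$ and apply Lemma \ref{Lemma1} at each of the $p$ ``${\bf a}$-steps'' and Lemma \ref{Lemma2} at each of the $q$ ``$\tilde{\bf a}$-steps''. The non-integrality hypotheses of Lemmas \ref{Lemma1} and \ref{Lemma2} are exactly the assumptions made on ${\bf a}+\sum_s l_s{\bf h}_s+{\bf b}n$ and on $\tilde{\bf a}$. Each application contributes a factor of the form $C_i\prod_s r_{s,i}^{l_s}|h_{s,i}l_s|^{l_sh_{s,i}}$ or $\tilde C_j\prod_s\tilde r_{s,j}^{l_s}|\tilde h_{s,j}l_s|^{-l_s\tilde h_{s,j}}$.

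Multiplying all $p+q$ factors together, the pieces $|h_{s,i}|^{l_sh_{s,i}}$ and $|\tilde h_{s,j}|^{-l_s\tilde h_{s,j}}$ (over all $i,j$) are of the form $c_s^{l_s}$ and can be absorbed into a single constant $r_s^{l_s}$, while the remaining $l_s$-dependent piece is
\begin{equation}
\prod_s\Bigl(\prod_i l_s^{l_sh_{s,i}}\cdot\prod_j l_s^{-l_s\tilde h_{s,j}}\Bigr)=\prod_s l_s^{l_s(|{\bf h}_s|-|\tilde{\bf h}_s|)}.
\end{equation}
Up to an additional harmless factor $||{\bf h}_s|-|\tilde{\bf h}_s||^{l_s(|{\bf h}_s|-|\tilde{\bf h}_s|)}=c_s^{l_s}$ absorbed into $r_s$, this equals $\prod_s(||{\bf h}_s|-|\tilde{\bf h}_s||\cdot l_s)^{(|{\bf h}_s|-|\tilde{\bf h}_s|)l_s}$, yielding the stated inequality; the convention $0^0:=1$ covers the degenerate case $|{\bf h}_s|=|\tilde{\bf h}_s|$.

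The main obstacle is bookkeeping. One must verify at each intermediate stage that the shifted series is holomorphic in a neighbourhood of the current sector $S^{(k)}$ so that the next Erd\'elyi--Kober operator is well-defined, and that the non-integrality hypothesis of the next lemma applies. Holomorphy on the shrinking sectors is preserved because the contours $P$, $Q$, $\partial\Delta(1;\tilde\varepsilon)$ used in the integral representations of the operators send $z\in S^{(k+1)}$ to points $t^{\kappa}z\in S^{(k)}$ where the integrand is holomorphic, which is precisely what is built into Lemmas \ref{Lemma1} and \ref{Lemma2}; the non-integrality is handed to us by the standing hypothesis on ${\bf a}+\sum_s l_s{\bf h}_s+{\bf b}n$ and by the fact that Lemma \ref{Lemma2} only requires $\tilde a_j\notin\Z$ (the shifted argument appears only in $\beta$, not in $\alpha$). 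Once these points are tracked, the $p+q$ iterated bounds multiply to give exactly the estimate in the statement.
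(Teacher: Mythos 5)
Your proof is correct and follows exactly the same route as the paper: the paper's proof simply records the identity expressing the shifted $F$ as $\prod_{i=1}^p I_0^{(a_i+\sum_s l_s h_{si},-\sum_s l_s h_{si}),b_i}\circ\prod_{j=1}^q I_0^{(\tilde a_j,\sum_s l_s\tilde h_{sj}),\tilde b_j}$ applied to $F$, then invokes Lemmas \ref{Lemma1} and \ref{Lemma2} successively. You have filled in the bookkeeping (nested sectors, absorption of the $|h_{s,i}|$- and $|\tilde h_{s,j}|$-powers into $r_s^{l_s}$, the $0^0$ convention) that the paper leaves implicit, but the underlying argument is identical.
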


\begin{proof}
\change{If we write ${\bf h}_s=(h_{s1},\dots,h_{sp})$ and $\tilde{\bf h}_s=(\tilde{h}_{s1},\dots,\tilde{h}_{sq})$,} we have an equality
\begin{align}
F\left(\substack{{\bf a}+\sum_sl_s{\bf h}_s,{\bf b}\\ \tilde{\bf a}+\sum_sl_s\tilde{\bf h}_s,\tilde{\bf b}};z\right)&=\prod_{i=1}^pI_0^{(a_i+\sum_sl_sh_{si},-\sum_sl_sh_{si}),b_i}\circ\prod_{j=1}^q
I_0^{(\tilde{a}_j,\sum_sl_s\tilde{h}_{sj}),\tilde{b}_j}\left[F\left(\substack{{\bf a},{\bf b}\\ \tilde{\bf a},\tilde{\bf b}};z\right)\right].
\end{align}
Theorem follows from a successive application of Lemma \ref{Lemma1} and Lemma \ref{Lemma2}.
\end{proof}

\subsection{Construction of a path and a proof of a connection formula}\label{subsec:conn}

\begin{figure}[h]
\center
\begin{tikzpicture}
\draw (0,0) node[below right]{O};
\draw[-=.5] (0,0)--(6,1);
\draw[-=.5] (0,0)--(8,8);
\draw[-=.5] (0,0)--(1,6);
\draw[-=.5] (3,2)--(9,3);
\draw[-=.5] (3,2)--(8,7);
\draw[-=.5] (2,3)--(7,8);
\draw[-=.5] (2,3)--(3,9);
\draw[-=.5,red] (3,3)--(9,3);
\draw[-=.5,red] (3,3)--(6,9);
\node at (2,1){$C_T$};
\draw[->=.5,domain=10:45] plot({(1.9)*cos(\x)},{(1.9)*sin(\x)});
\draw[<-=.5,domain=10:45] plot({(1.9)*cos(\x)},{(1.9)*sin(\x)});
\draw[->=.5,domain=45:80] plot({(1.9)*cos(\x)},{(1.9)*sin(\x)});
\draw[<-=.5,domain=45:80] plot({(1.9)*cos(\x)},{(1.9)*sin(\x)});
\draw[->=.5,domain=-6:94] plot({(0.7)*cos(\x)},{(0.7)*sin(\x)});
\draw[<-=.5,domain=-6:94] plot({(0.7)*cos(\x)},{(0.7)*sin(\x)});
\node at (1,2){$C_{T^\prime}$};
\node at (8.5,8.5){$C_Q$};
\node at (5,5){\textcolor{red}{$r\omega_Q+\tilde{C}_+$}};
\node at (7,4){$\omega_T+C_T$};
\node at (4,7){$\omega_{T^\prime}+C_{T^\prime}$};
\draw[->-=.5] (5,2.5)--(3,4.5);
\draw[->-=.5] (8,5.5)--(6,7.5);
\draw[->-=.8,dotted] (5,2.5)--(8,5.5);
\draw[->-=.8,dotted] (3,4.5)--(6,7.5);
\node at (4.2,3.7){$\gamma$};
\node at (8,6.5){$r\omega_Q+\gamma$};
\draw[-=.5] (0,0)--(8,-1);
\draw[-=.5] (0,0)--(-1,8);
\node at (1,0.5){$C_{T_{irr}}$};
\end{tikzpicture}
\caption{cones}
\end{figure}
Let $T$ and $T^\prime$ be adjacent regular triangulations. We use the notation of \S3.1. For any $z=(z_1,\dots)\in(\C^*)^N$, we set $-\log|z|:=(-\log|z_1|,\dots)\in\R^N$. \change{We first take a vector $\omega_T\in C_T$ (resp. $\omega_{T^\prime}\in C_{T^\prime}$) so that $\omega_T+C_T\subset-\log|U_T|$ (resp. $\omega_{T^\prime}+C_{T^\prime}\subset-\log|U_{T^\prime}|$) is true and fix a point $z_{start}\in\C^N$ so that $-\log|z_{start}|\in(\omega_{T}+C_{T})$. Then, we take a point $ z_{end}\in(\C^*)^N$ so that $-\log|z_{end}|\in (\omega_{T^\prime}+C_{T^\prime}\cap \tilde{C}_+)$.} Note that Proposition \ref{prop:Ctilde} and the fact that $\tilde{C}_+$ is an open set implies that $C_{T^\prime}\cap \tilde{C}_+\neq\varnothing$. We take a path $\gamma(t)$ $(0\leq t\leq 1)$ in $(\C^*)^N$ \change{so that the argument of each entry of it is fixed and $-\log|\gamma(t)|$ is a straight line.} \change{In order to clarify the choice of the argument,} we choose a complete system of representatives $\{\tilde{\bf k}_\s\}$ of $\Z^\s/\Z{}^tA_\s$ for any $\s=I\setminus\{ j_0\}$ and $j_0\in Z_+$ as in Proposition \ref{prop:CS}. Then, we choose $\arg z$ along this path $\gamma$ so that the inequalities (\ref{CommonSector}) are valid. When $z$ runs over this path $\gamma$, the circuit variables \change{are contained in} a set of the form $S_{\varepsilon, \varepsilon_\theta,R}$. \change{We retake the path $\gamma$ so that $-\log|\gamma|$ is translated by a vector $\omega_Q=({\bf 0}_{core(Q)},\omega_{\overline{core(Q)}})\in C_Q$ as in Corollary \ref{cor:3.5} and the arguments remain unchanged.} Therefore, by Theorem \ref{GevreyEstimate} combined with Proposition \ref{prop:Ctilde}, \change{and by replacing $\omega_Q$ by $r\omega_Q$ for a large positive number $r$ if necessary,} for any corank $1$ configuration $I$ in $Q$, and for any $j_0\in Z_+$ \change{and $\tilde{\bf k}_{\s}$}, the function

\begin{equation}\label{eqn:series}
\prod_{j\in\bar{I}}\mathfrak{D}_jI_\s(e^{2\pi\ii\tilde{\bf k}_\s}z_\s,z_{j_0};c)
\end{equation}
with $\s=I\setminus\{ j_0\}$ \change{is} convergent on $\gamma$. \change{Note that the function $I_\s(z_\s,z_{j_0};c)$ takes the form $I_\s(z_\s,z_{j_0};c)=(e^{\pi\ii{\bf 1}_-}z_{\s})^{-A_\s^{-1}c}\tilde{I}_\s(z_-,z_+;c)$ where $(z_-,z_+)$ is the circuit variable and the function (\ref{eqn:series}) with $\tilde{\bf k}_\s={\bf 0}$ takes the form
\begin{equation}
(e^{\pi\ii{\bf 1}_-}z_{\s})^{-A_\s^{-1}c}\sum_{{\bf m}=(m_k)_k\in\Z_{\geq 0}^{\bar{I}}}\frac{\tilde{I}_\s(z_-,z_+;c+A_{\bar{I}}{\bf m})}{{\bf m!}}\prod_{k\in\bar{I}}\left( (e^{\pi\ii{\bf 1}_-}z_{\s})^{-A_\s^{-1}{\bf a}(k)}z_k\right)^{m_k}.
\end{equation}
The absolute value of $z_{\s}^{-A_\s^{-1}{\bf a}(k)}z_k$ is small when $-\log|z|$ is in a far translation of $\tilde{C}_+$ inside itself.} Therefore, the analytic continuation of (\ref{eqn:series}) from $z_{start}$ to $z_{end}$ is computable from the formula (\ref{eqn:3.11}).

In \change{summary}, we obtain the following
\begin{thm}\label{thm:main}
Let $T$ and $T^\prime$ be adjacent regular triangulations of $A$. Suppose that for any corank $1$ configuration $I$ appearing in the modification of $T$ and $T^\prime$ and for any $j_0\in Z_+$, a complete set of representatives $\{\tilde{\bf k}_\s\}$ of $\Z^\s/\Z {}^tA_\s$ with $\s=I\setminus\{ j_0\}$ is given as in Proposition \ref{prop:CS}. Then, along the path $\gamma$ constructed above, for any corank $1$ configuration $I$ and $j_0\in Z_+$, we have a connection formula 
\begin{equation}
\psi_{I_{\geq 0}\setminus\{ j_0\},\tilde{\bf k}_\s}^{Z_-}(z;c)
=
\displaystyle\sum_{i\in Z_-}\frac{1}{p_{\s i}({\bf a}(j_0))}\psi_{I_{\geq 0}\setminus\{ j_0\},(\tilde{\bf k}_{\s\setminus\{ i\}},\overset{j_0}{\breve{0}})}^{(Z_-\setminus\{ i\})\cup\{ j_0\}}(z;c).
\end{equation}
Moreover, $\Gamma$-series corresponding to $\s\in T_{irr}$ are invariant after analytic continuation.
\end{thm}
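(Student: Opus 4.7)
The plan is to use the Mellin–Barnes integral $I_\sigma(e^{2\pi\ii\tilde{\bf k}_\s}z_\s,z_{j_0};c)$ from \S\ref{subsec:3.2} as an analytic bridge between the two bases. For a fixed corank $1$ configuration $I\subset\{1,\dots,N\}$ and $j_0\in Z_+$, set $\s=I\setminus\{j_0\}$ and consider the function
\begin{equation}
G_\s(z;c):=\prod_{j\in\bar I}\mathfrak{D}_j\,I_\s(e^{2\pi\ii\tilde{\bf k}_\s}z_\s,z_{j_0};c).
\end{equation}
By Theorem \ref{prop:BVP}, any application of $\mathfrak{D}_j$ to a solution of $GG(A_{I\cup\{k\}})$ produces a solution of the enlarged system, so once we prove convergence the function $G_\s(z;c)$ is a formal solution of $GG(A)$ whose restriction to $z_{\bar I}=0$ coincides with $I_\s(e^{2\pi\ii\tilde{\bf k}_\s}z_\s,z_{j_0};c)$.

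The next step is to identify $G_\s$ with the two sides of the claimed formula on the two different asymptotic regions. Near $z_{start}$, i.e.\ for $-\log|z|\in\omega_T+C_T$, I close the contour $C$ to the right and sum the residues at the positive poles $s=0,1,2,\dots$; by the computation leading to $I_\s(z_I;c)=\psi_{\s\cap I_{\geq 0}}^{Z_-}(z;c)$ and the compatibility ${\rm bv}_{j}\circ\mathfrak{D}_j=\mathrm{id}$ of Theorem \ref{prop:BVP}, the difference-operator family $\prod_{j\in\bar I}\mathfrak{D}_j$ converts this residue sum into $\psi_{I_{\geq 0}\setminus\{j_0\},\tilde{\bf k}_\s}^{Z_-}(z;c)$. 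Near $z_{end}$, where $-\log|z|\in\omega_{T^\prime}+C_{T^\prime}\cap\tilde C_+$, I close to the left and sum residues at the negative spirals; by (\ref{eqn:3.11}) each such residue sum is precisely $\frac{1}{p_{\s i}({\bf a}(j_0))}\psi_{I_{\geq 0}\setminus\{j_0\},(\tilde{\bf k}_{\s\setminus\{i\}},\overset{j_0}{\breve 0})}^{(Z_-\setminus\{i\})\cup\{j_0\}}(z;c)$ (with the operator $\prod_{j\in\bar I}\mathfrak{D}_j$ applied termwise). Equating the two evaluations of the same holomorphic function $G_\s(z;c)$ after analytic continuation along $\gamma$ yields the connection formula.

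The main technical obstacle is that the formal series produced by $\prod_{j\in\bar I}\mathfrak{D}_j$ and by the contour closures must actually converge all along $\gamma$, not only at its two endpoints. Here I invoke Proposition \ref{prop:CS}, which selects the representatives $\{\tilde{\bf k}_\s\}$ so that the convergence sector (\ref{CommonSector}) for the Mellin–Barnes integral is non-empty on $\gamma$, and Corollary \ref{cor:3.5}, which furnishes the weight $\omega_Q\in C_Q$ supported on $\overline{\mathrm{core}(Q)}$. Translating the path by a sufficiently large multiple $r\omega_Q$ keeps the arguments fixed while shrinking the moduli $|z_\s^{-A_\s^{-1}{\bf a}(k)}z_k|$ for $k\in\bar I$. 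Each coefficient of the $\mathfrak{D}_j$–expansion is a hypergeometric function of the circuit variables of the type $F\!\left(\substack{{\bf a},{\bf b}\\ \tilde{\bf a},\tilde{\bf b}};z\right)$ studied in \S\ref{subsec:3.3}; Theorem \ref{GevreyEstimate} gives Gevrey-type bounds in the parameters which—together with the factorial decay of $1/{\bf m}!$—are exactly strong enough to absorb the factorial growth of $\prod_{k\in\bar I}\bigl(z_\s^{-A_\s^{-1}{\bf a}(k)}z_k\bigr)^{m_k}$ once $r$ is large. This yields uniform convergence of $G_\s(z;c)$ on $\gamma$ and legitimizes interchange of summation and analytic continuation.

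Finally, for a simplex $\s\in T_{irr}$ the element $\psi^{\s^u}_{\s^d,\tilde{\bf k}}$ lies in $\sol_{GG(A),z}$ on $U_\s$, and $T_{irr}\subset T\cap T^\prime$ implies $U_\s\supset U_T\cup U_{T^\prime}$; since $\gamma$ can be chosen inside $U_\s$ (by enlarging $r$ if needed, so that the path stays in the common domain of convergence), the analytic continuation along $\gamma$ of such a $\Gamma$-series is itself. Combining the boxed formula for each corank $1$ piece with this invariance on the irrelevant part gives the full connection statement of Theorem \ref{thm:main}.
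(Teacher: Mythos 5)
Your proposal follows essentially the same route as the paper: use the Mellin–Barnes integral $I_\s(e^{2\pi\ii\tilde{\bf k}_\s}z_\s,z_{j_0};c)$ as the bridge, apply $\prod_{j\in\bar I}\mathfrak{D}_j$, evaluate by residues on the positive poles (giving the left side via $I_\s(z_I;c)=\psi_{\s\cap I_{\geq 0}}^{Z_-}(z;c)$) near $z_{start}$ and on the negative spirals (giving the right side via (\ref{eqn:3.11})) near $z_{end}$, and justify convergence of the $\mathfrak{D}$-series along all of $\gamma$ by translating by a large multiple of $\omega_Q$ and invoking Theorem \ref{GevreyEstimate}; the $T_{irr}$ invariance is handled by noting that $\gamma$ stays in the common convergence region. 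This is the paper's argument, accurately reconstructed.
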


\change{
\begin{rem}
Fixing $c$ very generic with respect to $T$ and $T^\prime$, Theorem \ref{thm:main} provides a connection formula of the corresponding GKZ system $M_A(c)$.
\end{rem}
}

\change{
\begin{exa}
Let us illustrate the general construction above in terms of the configuration matrix $A$ of Example \ref{exa:313} and the modification $T_3\rightarrow T_2$. The combinatorial data necessary for the connection formula are $I=1245$, $Z=145$, $Z_+=1$, $Z_-=45$. The intermediate polyhedral subdivision is given by $Q=\{ 1245, 234,235\}$. We take a primitive generator ${}^t(2,0,-1,-1)$ of $L_{A_{1245}}$. Let us consider a complete system of representatives $\{ \tilde{\bf k}_0:={}^t(0,0,0),\tilde{\bf k}_1:={}^t(0,1,0)\}$ of $\Z^{\{2,4,5\}}/\Z {}^tA_{245}$. Then, the inequalities (\ref{CommonSector}) are equivalent to a single inequality $0<\arg z_1-\frac{1}{2}(\arg z_4+\arg z_5)<\pi$. We fix the arguments of $z_1,\dots,z_5$ so that this inequality is true. As ${\rm core}(Q)=I=1245$, we can take a vector $\omega_Q=(0,0,1,0,0)$ as that of Corollary \ref{cor:3.5}. Note that $\pi_A(\omega_Q)=(-1,0)$ under the identification $L_A^\vee\otimes\R\simeq\R^2$ of Example \ref{exa:313}. Let us take $z_{start}$ (resp. $z_{end}$) so that $-\log|z_{start}|$ (resp. $-\log|z_{end}|$) is in a far translation of $C_{T_3}$ (resp. $C_{T_2}$) inside itself. We take a path $\gamma$ so that $-\log|\gamma|$ is a straight line from $-\log|z_{start}|$ to $-\log|z_{end}|$. If we consider the boundary value to the coordinate plane $\{ z_3=0\}$, we obtain a connection formula 
\begin{equation}
{\rm bv}_3\left(\psi_{2,\tilde{\bf k}_0}^{45}(z;c)\right)\rightsquigarrow 2\left\{ {\rm bv}_3\left(\psi_{2,{\bf 0}}^{14}(z;c)\right)+{\rm bv}_3\left(\psi_{2,{\bf 0}}^{15}(z;c)\right)\right\}
\end{equation}
and
\begin{equation}
{\rm bv}_3\left(\psi_{2,\tilde{\bf k}_1}^{45}(z;c)\right)\rightsquigarrow 2\left\{ e^{-2\pi\ii c_3}{\rm bv}_3\left(\psi_{2,{\bf 0}}^{14}(z;c)\right)+{\rm bv}_3\left(\psi_{2,{\bf 0}}^{15}(z;c)\right)\right\}
\end{equation}
along a path $\gamma^\prime$ obtained from $\gamma$ by truncating the $3$rd entry. This formula can be obtained from the intermediate Mellin-Barnes integral
\begin{equation}
I(z_1,z_4,z_5;c):=\frac{1}{2\pi\ii}\int_C\frac{\Gamma(-s)\Gamma\left(\frac{1}{2}(c_1-c_2+c_3+s)\right)\Gamma\left(\frac{1}{2}(c_1-c_2-c_3+s)\right)}{\Gamma(1-c_2)}\left( (z_4z_5)^{-\frac{1}{2}}z_1\right)^sds.
\end{equation}
where $C$ is the integration contour separating the positive and the negative spirals of the poles of the integrand. For example, ${\rm bv}_3\left(\psi_{2,\tilde{\bf k}_0}^{45}(z;c)\right)$ is given by $e^{\pi\ii(c_2-c_1)}z_2^{-c_2}z_4^{\frac{1}{2}(c_2-c_1-c_3)}z_5^{\frac{1}{2}(c_2+c_3-c_1)}I(z_1,z_4,z_5;c)$ and ${\rm bv}_3\left(\psi_{2,\tilde{\bf k}_1}^{45}(z;c)\right)$ is given by $e^{\pi\ii(2c_2-2c_1-c_3)}z_2^{-c_2}z_4^{\frac{1}{2}(c_2-c_1-c_3)}z_5^{\frac{1}{2}(c_2+c_3-c_1)}I(z_1,e^{2\pi\ii}z_4,z_5;c)$. Note that $I(z_1,z_4,z_5;c)$ depends only on the circuit variables $z_1,z_4,z_5$. We want to estimate the series
\begin{equation}\label{eqn:342}
\mathfrak{D}_3\circ{\rm bv}_3\left(\psi_{2,\tilde{\bf k}_0}^{45}(z;c)\right)=
e^{\pi\ii(c_2-c_1)}z_2^{-c_2}z_4^{\frac{1}{2}(c_2-c_1-c_3)}z_5^{\frac{1}{2}(c_2+c_3-c_1)}\sum_{m=0}^\infty\frac{I(z_1,z_4,z_5;c+m{\bf a}(3))}{m!}(-z_1z_2^{-2}z_3)^m.
\end{equation}
If we replace $-\log|\gamma|$ by $-\log|\gamma|+r_1\omega_Q$ for some large number $r_1>0$, we see that the absolute values of $z_1,z_2,z_4,z_5$ remain unchanged while the absolute value of $z_3$ (hence that of $z_1z_2^{-2}z_3$) become smaller as $r_1$ increases. Theorem \ref{GevreyEstimate} shows that there is a positive number $r_2$ so that the ratio $\frac{I(z_1,z_4,z_5;c+m{\bf a}(3))}{m!}$ is bounded by a constant multiple of the power $r_2^m$ along $\gamma$. Thus, if we replace the contour $\gamma$ so that $-\log|\gamma|$ is translated by $r_1\omega_Q$ for some $r_1>0$, (\ref{eqn:342}) is convergent along $\gamma$. As we have the same estimate for the solution $\mathfrak{D}_3\circ{\rm bv}_3\left(\psi_{2,\tilde{\bf k}_1}^{45}(z;c)\right)$, we obtain connection formulas
\begin{equation}
\psi_{2,\tilde{\bf k}_0}^{45}(z;c)\rightsquigarrow 2\left\{ \psi_{2,{\bf 0}}^{14}(z;c)+\psi_{2,{\bf 0}}^{15}(z;c)\right\}
\end{equation}
and
\begin{equation}
\psi_{2,\tilde{\bf k}_1}^{45}(z;c)\rightsquigarrow 2\left\{ e^{-2\pi\ii c_3}\psi_{2,{\bf 0}}^{14}(z;c)+\psi_{2,{\bf 0}}^{15}(z;c)\right\}
\end{equation}
along $\gamma$. On the other hand, $-\log|\gamma|$ is contained in the cone $C_{T_{irr}}=C_{T_2}\cup C_{T_3}$ and we can conclude that the other solutions $\psi_{234,{\bf 0}}(z;c)$ and $\psi_{235,{\bf 0}}(z;c)$ remain unchanged along $\gamma$.
\end{exa}
}

\begin{exa}
\change{Let us discuss yet another example. We} consider a $4\times 6$ matrix 
$
A=
\begin{pmatrix}
1&0&0&0&1&1\\
0&1&0&0&1&0\\
0&0&1&0&0&1\\
0&0&0&1&-1&-1
\end{pmatrix}.
$ The secondary fan is a complete fan in $(\R^6)^\vee$. The projected image of $\Sigma(A) $ through the projetion $\pi_A:(\R^6)^\vee\rightarrow L_A^\vee\otimes_\Z\R$ is shown in Figure \ref{fig:SecFan}. Here, we use the isomorphism $L_A\simeq\Z^2$ specified by choosing a basis $\left\{{}^t(0,-1,1,0,1,-1),{}^t(-1,0,-1,1,0,1)\right\}$ of $L_A$.

\begin{figure}[H]
\begin{center}
\scalebox{0.8}[0.8]{
\begin{tikzpicture}
\draw (0,0) node[anchor= south west]{O}; 
\draw[-] (0,0)--(2,0);
\draw[-] (0,0)--(0,2);
\draw[-] (0,0)--(-2,2);
\draw[-] (0,0)--(-2,0);
\draw[-] (0,0)--(0,-2);
\draw[-] (0,0)--(2,-2);
\node at (1,1){$T$};
\node at (1.7,-0.8){$T^\prime$};
\end{tikzpicture}
}
\end{center}
\caption{The projected image of the secondary fan}
\label{fig:SecFan}
\end{figure}
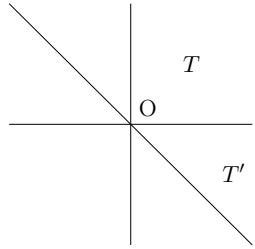

\noindent
The basis of solutions at $T=\{1234,1236,1256\}$ is given as follows:
\begin{align}
\phi_{T,1234}(z;c)=&e^{-\pi\ii(c_1+c_3)}\frac{\sin\pi c_2 \Gamma(c_1)\Gamma(c_2)\Gamma(c_3)}{\pi\Gamma(1-c_4)}z_1^{-c_1}z_2^{-c_2}z_3^{-c_3}z_4^{-c_4}F_1\left( \substack{c_1,c_2,c_3\\ 1-c_4};\frac{z_4z_5}{z_1z_2},\frac{z_4z_6}{z_1z_3}\right)\\
\phi_{T,1236}(z;c)=&e^{-\pi\ii(c_1+c_3+2c_4)}\frac{\sin\pi c_2\sin\pi (-c_4)\Gamma(c_1+c_4)\Gamma(c_3+c_4)\Gamma(c_2)\Gamma(-c_4)}{\pi^2}\times\nonumber\\
&z_1^{-c_1-c_4}z_2^{-c_2}z_3^{-c_3-c_4}z_6^{c_4}G_2\left( c_1+c_4,c_2,-c_4,c_3+c_4;-\frac{z_4z_6}{z_1z_3},-\frac{z_3z_5}{z_2z_6}\right)\\
\phi_{T,1256}(z;c)=&\frac{\sin\pi(c_1+c_4)\sin\pi(c_2+c_3+c_4)\sin\pi c_3\Gamma(c_1+c_3)\Gamma(c_2+c_3+c_4)\Gamma(c_3)}{\pi^3\Gamma(1+c_3+c_4)}\times\nonumber\\
&z_1^{-c_1-c_4}z_2^{-c_2-c_3-c_4}z_5^{c_3+c_4}z_6^{-c_3}F_1\left( \substack{c_3,c_1+c_4,c_2+c_3+c_4\\ 1+c_3+c_4};\frac{z_3z_5}{z_2z_6},\frac{z_4z_5}{z_1z_2}\right)
\end{align}

\noindent
where $F_1\left(\substack{\alpha,\beta,\beta^\prime\\ \gamma};z_1,z_2\right)$ and $G_2\left(\alpha,\alpha^\prime, \beta, \beta^\prime;z_1,z_2\right)$ are Appell's $F_1$ and Horn's $G_2$ series defined by
\begin{equation}
F_1\left(\substack{\alpha,\beta,\beta^\prime\\ \gamma};z_1,z_2\right)=\sum_{m,n=0}^\infty \frac{(\alpha)_{m+n}(\beta)_m(\beta^\prime)_n}{(\gamma)_{m+n}m!n!}z_1^mz_2^n
\end{equation}
and
\begin{equation}
G_2\left(\alpha,\alpha^\prime, \beta, \beta^\prime;z_1,z_2\right)=\displaystyle\sum_{m,n=0}^\infty\frac{(\alpha)_m(\alpha^\prime)_n(\beta)_{n-m}(\beta^\prime)_{m-n}}{m!n!}z_1^mz_2^n.
\end{equation}
Note that these functions $\phi_{T,\s}(z;c)$ are convergent when $z_4$ and $z_5$ are small enough and other variables $z_1,z_2,z_3,z_6$ are fixed.
On the other hand, the basis of solutions at $T^\prime=\{ 1246,2346,1256\}$ is given as follows:
\begin{align}
\phi_{T^\prime,1246}(z;c)=&e^{-\pi\ii c_3}\frac{\sin\pi(c_1+c_4)\sin\pi c_2\Gamma(c_1)\Gamma(c_2)\Gamma(c_1+c_4)}{\pi\sin\pi (c_3-c_1)\Gamma(1+c_1-c_3)}\times\nonumber\\
&z_2^{-c_2}z_3^{c_1-c_3}z_4^{-c_1-c_4}z_6^{-c_1}F_1\left( \substack{c_1+c_4,c_2,c_1\\ 1+c_1-c_3};\frac{z_1z_3}{z_4z_6},\frac{z_3z_5}{z_2z_6}\right)\\
\phi_{T^\prime,2346}(z;c)=&e^{-\pi\ii c_1}\frac{\sin\pi c_2\sin\pi (c_3+c_4)\Gamma(c_1-c_3)\Gamma(c_2)\Gamma(c_3)\Gamma(c_3+c_4)}{\pi^2}\times\nonumber\\
&z_1^{c_3-c_1}z_2^{-c_2}z_4^{-c_3-c_4}z_6^{-c_3}G_2\left( c_3,c_2,c_1-c_3,c_3+c_4;-\frac{z_1z_3}{z_4z_6},-\frac{z_4z_5}{z_1z_2}\right)\\
\phi_{T^\prime,1256}(z;c)=&\frac{\sin\pi(c_1+c_4)\sin\pi(c_2+c_3+c_4)\sin\pi c_3\Gamma(c_1+c_3)\Gamma(c_2+c_3+c_4)\Gamma(c_3)}{\pi^3\Gamma(1+c_3+c_4)}\times\nonumber\\
&z_1^{-c_1-c_4}z_2^{-c_2-c_3-c_4}z_5^{c_3+c_4}z_6^{-c_3}F_1\left( \substack{c_3,c_1+c_4,c_2+c_3+c_4\\ 1+c_3+c_4};\frac{z_3z_5}{z_2z_6},\frac{z_4z_5}{z_1z_2}\right)
\end{align}

We see that the modification from $T$ to $T^\prime$ is controlled by the corank $1$ configuration $12346$ and that $Z_+=46$, $Z_-=13$ and $I_0=2$. In view of Theorem \ref{thm:main}, solving the boundary value problem for $\{ z_5=0\}$ yields the connection formulae
\begin{equation*}
\phi_{T,1234}(z;c)\leadsto \phi_{T^\prime,1246}(z;c)+\phi_{T^\prime,2346}(z;c)
\end{equation*}
and
\begin{equation*}
\phi_{T,1236}(z;c)\leadsto e^{-\pi\ii c_4}\frac{\sin\pi c_1}{\sin\pi(c_1+c_4)}\phi_{T^\prime,1246}(z;c)+e^{-\pi\ii c_4}\frac{\sin\pi c_3}{\sin\pi(c_3+c_4)}\phi_{T^\prime,2346}(z;c).
\end{equation*}
Note that the connection coefficients are translation invariant, i.e., they belong to the field $\C(T_1,T_2,T_3,T_4)$. On the other hand, we have
\begin{equation*}
\phi_{T,1256}(z;c)=\phi_{T^\prime,1256}(z;c).
\end{equation*}
\end{exa}

\section*{Acknowledgement}
The author thanks Nobuki Takayama for valuable discussions. This work is supported by JSPS KAKENHI Grant Number 19K14554 and JST CREST Grant Number JP19209317 including AIP challenge program.

\bibliographystyle{alpha}
\bibliography{myreferences}

\begin{thebibliography}{BZMW15}

\bibitem[Ado94]{Adolphson}
Alan Adolphson.
\newblock Hypergeometric functions and rings generated by monomials.
\newblock {\em Duke Math. J.}, 73(2):269--290, 1994.

\bibitem[Bat93]{Batyrev}
Victor~V. Batyrev.
\newblock Variations of the mixed {H}odge structure of affine hypersurfaces in
  algebraic tori.
\newblock {\em Duke Math. J.}, 69(2):349--409, 1993.

\bibitem[Beu16]{BeukersMellinBarnes}
Frits Beukers.
\newblock Monodromy of {$A$}-hypergeometric functions.
\newblock {\em J. Reine Angew. Math.}, 718:183--206, 2016.

\bibitem[BS84]{Borel}
A.~Borel and N.~Spaltenstein.
\newblock Sheaf theoretic intersection cohomology.
\newblock In {\em Intersection cohomology ({B}ern, 1983)}, volume~50 of {\em
  Progr. Math.}, pages 47--182. Birkh\"{a}user Boston, Boston, MA, 1984.

\bibitem[BZMW15]{BerkeschMatusevichWalther}
Christine Berkesch~Zamaere, Laura~Felicia Matusevich, and Uli Walther.
\newblock Singularities and holonomicity of binomial {$D$}-modules.
\newblock {\em J. Algebra}, 439:360--372, 2015.

\bibitem[EMOT53]{ErdelyiEtAl}
Arthur Erd\'elyi, Wilhelm Magnus, Fritz Oberhettinger, and Francesco~G.
  Tricomi.
\newblock {\em Higher transcendental functions. {V}ols. {I}, {II}}.
\newblock McGraw-Hill Book Company, Inc., New York-Toronto-London, 1953.
\newblock Based, in part, on notes left by Harry Bateman.

\bibitem[ET15]{EsterovTakeuchi}
Alexander Esterov and Kiyoshi Takeuchi.
\newblock Confluent {$A$}-hypergeometric functions and rapid decay homology
  cycles.
\newblock {\em Amer. J. Math.}, 137(2):365--409, 2015.

\bibitem[FF10]{FernandezFernandez}
Mar\'\i a-Cruz Fern\'andez-Fern\'andez.
\newblock Irregular hypergeometric {$\mathscr{D}$}-modules.
\newblock {\em Adv. Math.}, 224(5):1735--1764, 2010.

\bibitem[FF19]{FF2}
Mar\'{\i}a-Cruz Fern\'{a}ndez-Fern\'{a}ndez.
\newblock On the local monodromy of {$A$}-hypergeometric functions and some
  monodromy invariant subspaces.
\newblock {\em Rev. Mat. Iberoam.}, 35(3):949--961, 2019.

\bibitem[FFW11]{FFW}
Mar\'{\i}a-Cruz Fern\'{a}ndez-Fern\'{a}ndez and Uli Walther.
\newblock Restriction of hypergeometric {$\mathscr{D}$}-modules with respect to
  coordinate subspaces.
\newblock {\em Proc. Amer. Math. Soc.}, 139(9):3175--3180, 2011.

\bibitem[GG99]{GelfandGraevGG}
I.~M. Gel'fand and M.~I. Graev.
\newblock G{G} functions and their relations to general hypergeometric
  functions.
\newblock {\em Lett. Math. Phys.}, 50(1):1--27, 1999.

\bibitem[GKZ90]{GKZEuler}
I.~M. Gel'fand, M.~M. Kapranov, and A.~V. Zelevinsky.
\newblock Generalized {E}uler integrals and {$A$}-hypergeometric functions.
\newblock {\em Adv. Math.}, 84(2):255--271, 1990.

\bibitem[GKZ94]{GKZbook}
I.~M. Gel'fand, M.~M. Kapranov, and A.~V. Zelevinsky.
\newblock {\em Discriminants, resultants, and multidimensional determinants}.
\newblock Mathematics: Theory \& Applications. Birkh\"{a}user Boston, Inc.,
  Boston, MA, 1994.

\bibitem[GZK89]{GKZToral}
I.~M. Gel'fand, A.~V. Zelevinski\u\i, and M.~M. Kapranov.
\newblock Hypergeometric functions and toric varieties.
\newblock {\em Funktsional. Anal. i Prilozhen.}, 23(2):12--26, 1989.

\bibitem[GZK90]{GKZLeningrad}
I.~M. Gel'fand, A.~V. Zelevinski\u{\i}, and M.~M. Kapranov.
\newblock Discriminants of polynomials in several variables and triangulations
  of {N}ewton polyhedra.
\newblock {\em Algebra i Analiz}, 2(3):1--62, 1990.

\bibitem[Hec87]{Heckman}
G.~J. Heckman.
\newblock Root systems and hypergeometric functions. {II}.
\newblock {\em Compositio Math.}, 64(3):353--373, 1987.

\bibitem[Hie09]{HienRDHomology}
Marco Hien.
\newblock Periods for flat algebraic connections.
\newblock {\em Invent. Math.}, 178(1):1--22, 2009.

\bibitem[Hot]{Hotta}
Ryoshi Hotta.
\newblock Equivariant d-modules.
\newblock {\em arXiv:math/9805021}.

\bibitem[HR08]{HienRoucairol}
Marco Hien and C\'{e}line Roucairol.
\newblock Integral representations for solutions of exponential {G}auss-{M}anin
  systems.
\newblock {\em Bull. Soc. Math. France}, 136(4):505--532, 2008.

\bibitem[JF]{FM}
Laura Felicia~Matusevich Jens~Forsg\r{a}rd.
\newblock A zariski theorem for monodromy of {A}-hypergeometric systems.
\newblock {\em arXiv:2005.00275}.

\bibitem[KO77]{KashiwaraOshima}
Masaki Kashiwara and Toshio \={O}shima.
\newblock Systems of differential equations with regular singularities and
  their boundary value problems.
\newblock {\em Ann. of Math. (2)}, 106(1):145--200, 1977.

\bibitem[Kur18]{KTT}
Nobuki; Takemura~Akimichi Kuriki, Satoshi;~Takayama.
\newblock {$A$}-hypergeometric distributions and {N}ewton polytopes.
\newblock {\em Adv. in Appl. Math.}, 99:109--133, 2018.

\bibitem[LS91]{LS}
Fran\c{c}ois Loeser and Claude Sabbah.
\newblock \'{E}quations aux diff\'{e}rences finies et d\'{e}terminants
  d'int\'{e}grales de fonctions multiformes.
\newblock {\em Comment. Math. Helv.}, 66(3):458--503, 1991.

\bibitem[MH]{MatsubaraEulerLaplace}
Saiei-Jaeyeong Matsubara-Heo.
\newblock Euler and {L}aplace integral representations of {GKZ} hypergeometric
  functions.
\newblock {\em arXiv:1904.00565}.

\bibitem[MMW05]{MMW}
Laura~Felicia Matusevich, Ezra Miller, and Uli Walther.
\newblock Homological methods for hypergeometric families.
\newblock {\em J. Amer. Math. Soc.}, 18(4):919--941, 2005.

\bibitem[OT09]{OharaTakayama}
Katsuyoshi Ohara and Nobuki Takayama.
\newblock Holonomic rank of {$\mathscr{A}$}-hypergeometric
  differential-difference equations.
\newblock {\em J. Pure Appl. Algebra}, 213(8):1536--1544, 2009.

\bibitem[Sla66]{SlaterBook}
Lucy~Joan Slater.
\newblock {\em Generalized hypergeometric functions}.
\newblock Cambridge University Press, Cambridge, 1966.

\bibitem[SST00]{SST}
Mutsumi Saito, Bernd Sturmfels, and Nobuki Takayama.
\newblock {\em Gr\"obner deformations of hypergeometric differential
  equations}, volume~6 of {\em Algorithms and Computation in Mathematics}.
\newblock Springer-Verlag, Berlin, 2000.

\bibitem[ST94]{SaitoTakayama}
Mutsumi Saito and Nobuki Takayama.
\newblock Restrictions of {A}-hypergeometric systems and connection formulas of
  the {$\Delta_1\times\Delta_{n-1}$}-hypergeometric function.
\newblock {\em Internat. J. Math.}, 5(4):537--560, 1994.

\bibitem[Sti98]{StienstraMirror}
Jan Stienstra.
\newblock Resonant hypergeometric systems and mirror symmetry.
\newblock In {\em Integrable systems and algebraic geometry ({K}obe/{K}yoto,
  1997)}, pages 412--452. World Sci. Publ., River Edge, NJ, 1998.

\bibitem[Stu96]{SturmfelsLecture}
Bernd Sturmfels.
\newblock {\em Gr\"{o}bner bases and convex polytopes}, volume~8 of {\em
  University Lecture Series}.
\newblock American Mathematical Society, Providence, RI, 1996.

\bibitem[SW08]{SchulzeWaltherIrregularity}
Mathias Schulze and Uli Walther.
\newblock Irregularity of hypergeometric systems via slopes along coordinate
  subspaces.
\newblock {\em Duke Math. J.}, 142(3):465--509, 2008.

\bibitem[SW09]{SchulzeWaltherLGM}
Mathias Schulze and Uli Walther.
\newblock Hypergeometric {D}-modules and twisted {G}au\ss -{M}anin systems.
\newblock {\em J. Algebra}, 322(9):3392--3409, 2009.

\bibitem[WW96]{WW}
E.~T. Whittaker and G.~N. Watson.
\newblock {\em A course of modern analysis}.
\newblock Cambridge Mathematical Library. Cambridge University Press,
  Cambridge, 1996.
\newblock An introduction to the general theory of infinite processes and of
  analytic functions; with an account of the principal transcendental
  functions, Reprint of the fourth (1927) edition.

\bibitem[YG]{GKTT}
Nobuki Takayama Yoshihito~Tachibana Yoshiaki~Goto, Tamio~Koyama.
\newblock Holonomic gradient method for two way contingency tables.
\newblock {\em arxiv:1803.04170}.

\end{thebibliography}

\end{document}